\newtheorem{Proposition}{Proposition}
\newtheorem{Theorem}{Theorem}
\newtheorem{Lemma}{Lemma}
\renewcommand{\d}{\mathrm{d}}
\DeclareMathOperator{\re}{\mathbb{R}}
\DeclareMathOperator{\yo}{\textsf{y}}
\newcommand{\abs}[1]{\left|#1\right|}
\newcommand{\E}{\mathbb{E}}
\newcommand{\ind}{\mathds{1}}
\newcommand{\ee}{\mathrm{e}}
\newcommand{\za}[1]{\stackrel{a}{#1}}
\newcommand{\zb}[1]{\stackrel{b}{#1}}
\newcommand{\zc}[1]{\stackrel{c}{#1}}
\newcommand{\zd}[1]{\stackrel{d}{#1}}
\newcommand{\ze}[1]{\stackrel{e}{#1}}
\begin{document}

\def\figureautorefname{Figure}
\def\tableautorefname{Table}
\def\algorithmautorefname{Algorithm}
\def\sectionautorefname{Section}
\def\subsectionautorefname{Section}
\def\Propositionautorefname{Proposition}
\def\Theoremautorefname{Theorem}
\def\Lemmaautorefname{Lemma}
\def\Assumptionautorefname{Assumption}
\def\Corollaryautorefname{Corollary}
\renewcommand*\footnoterule{}

\title{Theoretical properties of Bayesian Student-$t$ linear regression}

\author{Philippe Gagnon$^{1}$ and Yoshiko Hayashi$^2$}

\maketitle

\thispagestyle{empty}

\noindent $^{1}$Department of Mathematics and Statistics, Universit\'{e} de Montr\'{e}al, Canada.

\noindent $^{2}$Department of Economics, Osaka University of Economics, Japan.

\begin{abstract}
Bayesian Student-$t$ linear regression is a common robust alternative to the normal model, but its theoretical properties are not well understood. We aim to fill some gaps by providing analyses in two different asymptotic scenarios. The results allow to precisely characterize the trade-off between robustness and efficiency controlled through the degrees of freedom (at least asymptotically).
\end{abstract}

\noindent Keywords: built-in robustness, conflict resolution, efficiency, large-sample asymptotics, weak convergence.

\section{Introduction}\label{sec:intro}

Let us assume that we have access to a data set of the form $(\mathbf{x}_i, y_i)_{i = 1}^n$, where $\mathbf{x}_1 := (x_{11}, \ldots, x_{1p})^T, \ldots, \mathbf{x}_n := (x_{n1}, \ldots, x_{np})^T \in \re^p$ are $n$ vectors with data points from $p$ covariates and $y_1, \ldots, y_n \in \re$ are $n$ observations of a dependent variable, with $n$ and $p$ being positive integers. The context is the following: one is interested in modelling the dependent variable using the covariates and a Bayesian linear regression model is assumed. We consider that $x_{11} = \ldots = x_{n1} = 1$ to introduce an intercept in the model. As typically done in linear regression, we treat (at least for now) the vectors $\mathbf{x}_1, \ldots, \mathbf{x}_n$ as fixed and known, i.e.\ not as realizations of random variables, contrarily to $y_1, \ldots, y_n$. The posterior distribution will thus be conditional on the latter only.

In linear regression, the random variables $Y_1, \ldots, Y_n$ are more precisely modelled as $Y_i = \mathbf{x}_i^T \boldsymbol\beta + \sigma\varepsilon_i$, $i = 1,\ldots, n$, where $\boldsymbol\beta := (\beta_1, \ldots, \beta_p)^T \in \re^p$ is the vector of regression coefficients, $\sigma > 0$ is a scale parameter, and $\varepsilon_1, \ldots, \varepsilon_n \in \re$ are random standardized errors. We assume that the $n + 2$ random variables $\varepsilon_1, \ldots, \varepsilon_n$, $\boldsymbol\beta$ and $\sigma$ are independent, implying that $\varepsilon_i \mid \boldsymbol\beta, \sigma \stackrel{\mathcal{D}}{=} \varepsilon_i \sim f, i = 1,\ldots, n$, where ``$\, \stackrel{\mathcal{D}}{=} \,$'' denotes an equality in distribution and $f$ is used to denote a probability density function (PDF). This independence assumption is common.

The most common choice of PDF $f$ is 
a standard normal density. This choice is however well known for yielding a model that lacks robustness against outliers; see, e.g., \cite{1968tiao119}, \cite{1984west431}, \cite{2009Pena2196} and \cite{gagnon2020} in which Bayesian robust alternatives to normal linear regression are proposed. The preferred alternative is the Student-$t$ linear regression, meaning that $f$ is replaced by a Student-$t$ density; hereafter, we write Student instead of Student-$t$ to simplify. That strategy dates back at least to \cite{1984west431}. Its scope of application is wide, ranging from modelling of mobility trends \citep{boonstra2021multilevel} to spatial modelling in ecological epidemiology \citep{congdon2017representing}.  The model can be automatically estimated using the probabilistic programming language \textit{Stan} \citep{carpenter2017stan}.

Even though the Student linear regression is the preferred Bayesian robust alternative to normal linear regression, not a lot is known about the theoretical properties of that model. Robustness properties of simple special cases are understood, such as the location model \citep{o1979outlier} and the location--scale model \citep{andrade2011bayesian}, corresponding to linear regression models with only an intercept and with a known $\sigma$ in the former model, but not beyond that. With this paper, we aim to improve understanding of the general model by studying some of its theoretical properties. From now on, we thus consider that $f$ is a Student density with $\gamma > 0$ degrees of freedom, a parameter that is fixed and chosen by the user. This parameter can be considered as being unknown, as in \cite{fernandez1999multivariate}, \cite{fonseca2008objective}, and \cite{he2021objective}, in which considerations specific to this situation are addressed and in which the proposed approaches are theoretically and empirically studied. We here focus on the situation where $\gamma$ is fixed as it is also an important and practical situation. To simplify, we will consider that $\gamma$ is a positive integer. Again to simplify, we will consider that all covariates are continuous; the theoretical results presented in the next sections hold even when this is not the case, but under more technical assumptions.

In \autoref{sec:proper}, we present a condition on the prior density which, together with the framework presented above, guarantee that the posterior distribution is proper if $n > p + 1$, this latter condition being similar to what is required under the frequentist paradigm to perform inference. The condition on the prior density is weak; for instance, it is satisfied by the improper Jeffreys prior. Even though a proper posterior distribution is required to perform inference under the Bayesian paradigm, theoretical guarantees that it is the case are scarce.

We next turn in \autoref{sec:robustness} to a characterization of the robustness of the model under an asymptotic scenario where outliers are considered to be further and further away from the bulk of the data. We prove that the posterior distribution converges towards one for which the PDF terms of the outlying data points in the original posterior density are each replaced by $\sigma^\gamma$ in the limiting one, but everything else remains the same. The term $\sigma^\gamma$ represents a trace asymptotically left by each outlier, which makes them \emph{partially} rejected. The trace increases the limiting posterior variability of all coefficients. The increase is seen to be more or less significant depending on several factors, as explained and shown in \autoref{sec:robustness}. The increase is small for certain combinations of those factors, but the degrees of freedom are of crucial importance; larger degrees of freedom imply larger variability increases. Also, larger degrees of freedom imply that a greater distance between the outliers and the bulk of the data is required for the former to be (partially) rejected, which translates into a stronger influence on inference of outliers that are not far enough to be (partially) rejected.

We finish our study of Bayesian Student linear regression in \autoref{sec:efficiency} with an analysis of its efficiency in a large-sample asymptotic regime where the true generating process is the normal linear regression in order to compare a Bayesian Student estimator with the ordinary-least-squares (OLS) one when the latter is the benchmark. We prove that the efficiency of the regression-coefficient estimator is comparable: the Bayesian Student estimator has an asymptotic variance (where the randomness here comes from the data) which is proportional to that of the OLS estimator, with a factor of proportionality that is greater than 1 but that converges to 1 as $\gamma$ increases.

With the findings presented in Sections \ref{sec:robustness} and \ref{sec:efficiency} in hand, one is able to precisely measure the impact of one's choice of value of $\gamma$ (at least asymptotically), with smaller values yielding greater robustness and larger values producing more efficient estimators. Our findings suggest that degrees of freedom around 4 are generally suitable, supporting previous evidence. Note that the proofs of all theoretical results are deferred to \autoref{sec_proofs}.

\section{Properness}\label{sec:proper}

Under the linear-regression framework described in \autoref{sec:intro}, the posterior density is such that
\begin{align}\label{eqn:post}
 \pi(\boldsymbol\beta, \sigma \mid \mathbf{y}) := \pi(\boldsymbol\beta, \sigma) \left[\prod_{i = 1}^n (1 / \sigma) f((y_i - \mathbf{x}_i^T \boldsymbol\beta) / \sigma)\right] \Bigg/ m(\mathbf{y}), \quad \boldsymbol\beta \in \re^p, \sigma > 0,
\end{align}
where $\mathbf{y} := (y_1, \ldots, y_n)^T$, $\pi(\, \cdot \,, \cdot \,)$ is the prior density and
\begin{align*}
 m(\mathbf{y}) :=  \int_{\re^p} \int_0^\infty \pi(\boldsymbol\beta, \sigma) \left[\prod_{i = 1}^n (1 / \sigma) f((y_i - \mathbf{x}_i^T \boldsymbol\beta) / \sigma)\right] \, \d\sigma \, \d\boldsymbol\beta,
\end{align*}
if $m(\mathbf{y}) < \infty$, a situation where the posterior distribution is proper and thus well defined. It is crucial to identify conditions under which this posterior distribution is proper given that any Bayesian analysis based on the Student linear regression rests on this distribution (and the fact that it is proper). We present in \autoref{prop:proper} sufficient conditions allowing the use of improper prior distributions. The conditions cover the Jeffreys prior, i.e.\ $\pi(\boldsymbol\beta, \sigma) \propto 1 / \sigma$, and $\pi(\boldsymbol\beta, \sigma) \propto 1$. The conditions require $n > p + 1$. Of course, other sets of conditions requiring less observations are possible, for instance, when assuming proper prior distributions.
\begin{Proposition}\label{prop:proper}
Assume that $\pi(\boldsymbol\beta, \sigma) \leq \max(C, C / \sigma)$ for all $\boldsymbol\beta, \sigma$, where $C$ is a positive constant. If $n > p + 1$, then the posterior distribution is proper.
\end{Proposition}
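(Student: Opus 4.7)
The plan is to bound $m(\mathbf{y})$ directly by combining the boundedness and polynomial tail decay of the Student density $f$ with a change of variables that reduces the $(\boldsymbol\beta, \sigma)$-integration to a one-dimensional integral in $\sigma$. Since $\max(C, C/\sigma) \leq C + C/\sigma$, it suffices to prove finiteness of $m(\mathbf{y})$ separately for $\pi \equiv C$ and $\pi = C/\sigma$. Under the standard implicit assumption that the $n \times p$ design matrix has full column rank $p$, I pick a subset $S \subset \{1,\ldots, n\}$ of size $p$ with $\mathbf{X}_S := (\mathbf{x}_i)_{i \in S}^T$ invertible and apply the change of variables $\boldsymbol\beta \mapsto \mathbf{u}$ defined by $u_i := (y_i - \mathbf{x}_i^T\boldsymbol\beta)/\sigma$ for $i \in S$, whose Jacobian is $\sigma^p/|\det \mathbf{X}_S|$. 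Each remaining standardized residual becomes an affine expression $z_j = a_j/\sigma + \mathbf{c}_j^T \mathbf{u}$, $j \notin S$, with explicit data-dependent constants $a_j, \mathbf{c}_j$.

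The problem then reduces to showing that
$$\int_0^\infty \pi_0(\sigma)\,\sigma^{-(n-p)} I(\sigma)\, \d\sigma < \infty, \qquad I(\sigma) := \int_{\re^p} \prod_{i \in S} f(u_i) \prod_{j \notin S} f(a_j/\sigma + \mathbf{c}_j^T\mathbf{u})\,\d\mathbf{u},$$
with $\pi_0(\sigma) \in \{1, 1/\sigma\}$. I treat $\sigma \in [1,\infty)$ and $\sigma \in (0,1]$ separately. For $\sigma \geq 1$ this is easy: bounding each $f$ by $M := \sup f$ and using $\int \prod_{i\in S} f(u_i)\,\d\mathbf{u} = 1$ gives $I(\sigma) \leq M^{n-p}$, and $\int_1^\infty \sigma^{-(n-p)}\,\d\sigma$ is finite precisely because $n > p + 1$, which is where the hypothesis first enters. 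For $\sigma \in (0,1]$, where the weight $\pi_0(\sigma)\sigma^{-(n-p)}$ can be as singular as $\sigma^{-(n-p+1)}$, the strategy is to extract enough decay of $I(\sigma)$ as $\sigma \to 0$ via the Student tail bound $f(z) \lesssim (1+|z|)^{-(\gamma+1)}$ applied to the $n - p$ ``extra'' factors: on a bulk region $B_\sigma := \{\mathbf{u} : \|\mathbf{u}\| \leq R/\sigma\}$ with $R$ suitably small (in terms of $\min_{j\notin S} |a_j|/\|\mathbf{c}_j\|$), each extra factor is $O(\sigma^{\gamma+1})$ because its argument has magnitude $\gtrsim 1/\sigma$; on the complement, $\prod_{i\in S} f(u_i)$ is itself small by a Markov estimate using the same Student tail.

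The main technical obstacle is making the preceding small-$\sigma$ region analysis sufficiently quantitative to cover all values of $n, p, \gamma$ --- the naive complement bound $O(\sigma^\gamma)$ is not always enough to offset $\sigma^{-(n-p+1)}$, so the region split must be refined to also exploit the extra $f$-factors on the complement. The analysis additionally hinges on the condition $\mathbf{y} \notin \mathrm{col}(\mathbf{X})$, which guarantees that the constants $a_j$ are not all zero; this is also necessary for properness, since if $\mathbf{y} = \mathbf{X}\boldsymbol\beta^*$ then the reparametrization $\boldsymbol\beta = \boldsymbol\beta^* + \sigma\boldsymbol\tau$ produces a divergent $\int_0^1 \sigma^{p-n}\,\d\sigma$ factor for $n \geq p + 1$. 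A cleaner alternative I would fall back on is the scale-mixture representation $f(z) = \int_0^\infty (\lambda/(2\pi))^{1/2}\exp(-\lambda z^2/2)\,g(\lambda)\,\d\lambda$ with $g$ the $\mathrm{Gamma}(\gamma/2,\gamma/2)$ density: swapping the order of integration via Fubini, the inner $(\boldsymbol\beta, \sigma)$-integral is a weighted normal marginal likelihood computable in closed form (with $n > p+1$ being exactly the convergence condition for the $\sigma$-integral under $\pi \propto 1$), leaving a single integral over $\boldsymbol\lambda \in (0,\infty)^n$ to control via Cauchy--Binet bounds on $\det(\mathbf{X}^T \Lambda \mathbf{X})$.
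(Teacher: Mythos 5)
There is a genuine gap: the small-$\sigma$ regime, which is the only hard part of this proposition, is not actually proved. Your large-$\sigma$ analysis is fine and correctly locates where $n > p+1$ enters, and the change of variables $u_i = (y_i - \mathbf{x}_i^T\boldsymbol\beta)/\sigma$ on $p$ well-chosen observations is exactly the right device. But on $\sigma \in (0,1]$ your own accounting shows the argument stalls: the bulk region $\{\|\mathbf{u}\| \leq R/\sigma\}$ gives enough decay, while the complement gives only $O(\sigma^{\gamma})$ from the tail probability of $\prod_{i \in S} f(u_i)$, which beats $\sigma^{-(n-p+1)}$ only when $\gamma > n - p$. You acknowledge that ``the region split must be refined to also exploit the extra $f$-factors on the complement,'' but that refinement is precisely the content of the proof and it is not supplied; on the complement the extra factors $f(a_j/\sigma + \mathbf{c}_j^T\mathbf{u})$ need not be small (take $\mathbf{c}_j^T\mathbf{u} \approx -a_j/\sigma$), so one must decompose $\re^p$ according to which affine constraints are nearly active and track how many factors can be simultaneously large --- a combinatorial argument of the same flavour as the $\mathcal{O}_i$/$\mathcal{F}_i$ decomposition the paper uses for Theorem 1. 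The scale-mixture fallback has the same status: after Fubini, the $\boldsymbol\lambda$-integral of $\prod_i \lambda_i^{1/2} g(\lambda_i) \det(\mathbf{X}^T\Lambda\mathbf{X})^{-1/2} S(\boldsymbol\lambda)^{-(n-p-1)/2}$ must be controlled where blocks of $\lambda_i \to 0$ drive both the determinant and the weighted residual sum of squares $S(\boldsymbol\lambda)$ to zero, and this is asserted rather than done.

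For context, the paper's own ``proof'' is a one-line reduction to Proposition 2.1 of \cite{gagnon2020}, where the integral you are trying to bound is handled in full; so you are attempting a self-contained proof where the authors chose to cite. Two things you get right that are worth keeping: the observation that properness genuinely requires $\mathbf{y} \notin \mathrm{col}(\mathbf{X})$ (your reparametrization $\boldsymbol\beta = \boldsymbol\beta^* + \sigma\boldsymbol\tau$ showing divergence otherwise is correct, and this hypothesis is implicit in the paper via the assumptions of the cited proposition and the continuity of the covariates), and the precise identification of $\int_1^\infty \sigma^{-(n-p)}\,\d\sigma$ as the place where $n > p+1$ is used. But as written the proposal is a plan whose central estimate is missing.
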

One can establish that moments of order $M$ exist if the condition $n > p + 1$ is replaced by $n > p + 1 + M$. In variable selection, when the joint posterior of the models and parameters is considered, this joint posterior is proper if the prior distributions of the parameters of all models satisfy the upper bound in \autoref{prop:proper} and if $n > p_{\max} + 1$, where $p_{\max} < \infty$ is the number of covariates in the \text{complete} model (the model with all covariates). In the analogous situation where $\gamma$ is considered unknown, the joint posterior of $\gamma$ and the parameters is proper if we assume that $\gamma$ takes values in a finite subset of the positive integers and the prior distributions of the parameters of all models (resulting from different values for $\gamma$) satisfy the upper bound in \autoref{prop:proper}.

\section{Robustness}\label{sec:robustness}

In this section, we state a result characterizing the robustness of the Student linear regression against outliers. 
An outlier here is defined as a data point $(\mathbf{x}_i, y_i)$ with an extreme error $y_i - \mathbf{x}_i^T \boldsymbol\beta$, for $\boldsymbol\beta$ belonging to the set of probable values according to the bulk of the data. An error can be extreme because, for a given $\mathbf{x}_i$, the value of $y_i$ makes it extreme or because, for a given $y_i$, the value of $\mathbf{x}_i$ makes it extreme. We mathematically represent such extreme situations by considering an asymptotic scenario where the outliers move away from the bulk of the data along particular paths (see \autoref{fig:paths}). More precisely, we consider that the outlying data points $(\mathbf{x}_i, y_i)$ are such that $y_i \rightarrow \pm \infty$, with $\mathbf{x}_i$ being kept fixed (but perhaps extreme). Our result states that, for the outlying data points with fixed $\mathbf{x}_i$, there exist $y_i$ values such that the posterior distribution is similar to one for which the PDF terms of the outliers are each replaced by $\sigma^\gamma$.

   \begin{figure}[ht]
 \centering
   \includegraphics[width=0.40\textwidth]{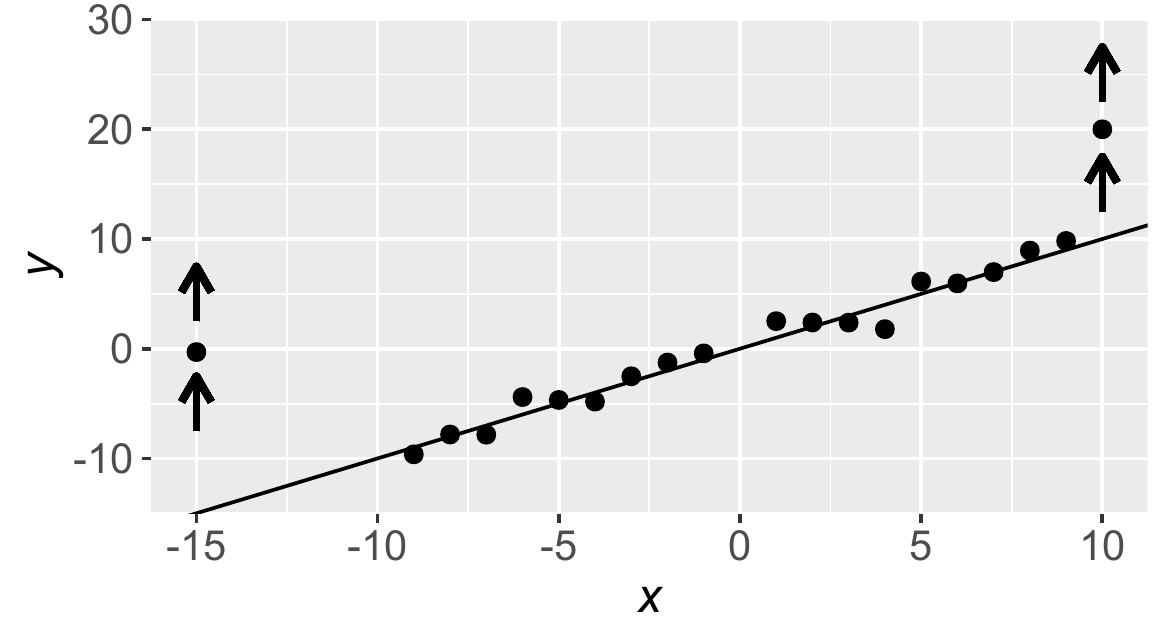}
  \vspace{-3mm}
\caption{Data with two outliers: one can be seen as having an extreme $y_i$, while the other as having an extreme $\mathbf{x}_i$}\label{fig:paths}
\end{figure}
\normalsize

We consider that each outlier goes to $\pm \infty$ at its own specific rate, to the extend that the ratio of two outliers is bounded: $y_i = a_i + b_i \omega$, for $i = 1, \dots, n$, where $a_i, b_i \in \re$ are constants such that $b_i = 0$ if the data point is a non-outlier and $b_i \neq 0$ if it is an outlier, and then, we let $\omega \rightarrow \infty$. We define the index set of outlying data points by: $\text{O} := \{i: b_i \neq 0\}$. The index set of non-outlying data points is thus given by: $\text{O}^\mathsf{c} = \{1, \ldots, n\} \setminus \text{O}$. We also define the set of non-outlying observations: $\mathbf{y}_{\text{O}^\mathsf{c}} := \{y_i: i \in \text{O}^\mathsf{c}\}$.

Central to the characterization of the robustness of the Student linear regression is the limiting behaviour of the Student PDF evaluated at an outlying point: for any $i \in \text{O}$ and fixed $(\boldsymbol\beta, \sigma)$,
\begin{align}\label{eqn:limit_PDF}
 \lim_{\omega \rightarrow \infty} \frac{(1 / \sigma) f((y_i - \mathbf{x}_i^T \boldsymbol\beta) / \sigma)}{f(y_i)} = \lim_{\omega \rightarrow \infty} \frac{1}{\sigma} \left(\frac{\gamma + y_i^2}{\gamma + (y_i - \mathbf{x}_i^T \boldsymbol\beta)^2 / \sigma^2}\right)^{\frac{\gamma + 1}{2}} = \sigma^\gamma.
\end{align}
This result is a consequence of the polynomial behaviour of the Student PDF tails which makes them regularly varying; for details on regularly varying function, see \cite{resnick2007heavy}. The result suggests that the PDF term of an outlier in the posterior density behaves in the limit like $\sigma^\gamma f(y_i) \propto \sigma^\gamma$ (recall \eqref{eqn:post}). This is confirmed in a theorem below.

An outlier can be seen as a source of information that is in conflict with others. The sources with which it is in conflict represent, among others, the non-outliers. Here, we consider that the prior distribution is not in conflict with the non-outliers. A line of research called \textit{resolution of conflict} studies how conflicting sources of information are dealt with by Bayesian models. This line of research was started by \cite{de1961bayesian} with a first analysis in \cite{lindley1968choice}, followed by an introduction of a formal theory in \cite{dawid1973posterior}, \cite{hill1974coherence} and \cite{o1979outlier}. For a review of Bayesian heavy-tailed models and conflict resolution, see \cite{o2012bayesian}. In the latter paper, it is noted that there exists a gap between the models formally covered by the theory of conflict resolution and models commonly used in practice. The present paper contributes to the expansion of the theory of conflict resolution by covering a model used in practice, namely the Student linear regression.

In the presence of conflicting sources of information and when using the Student linear regression, conflicting information is partially rejected as a trace remains, i.e.\ $\sigma^\gamma$. Ideally, conflicting information is wholly rejected as its source becomes increasingly remote, which translates into a PDF which behaves asymptotically like $f(y_i) \propto 1$ \citep{1984west431}. The Student model is thus said to be \textit{partially robust}.  Recently, research on resolution of conflict in linear-regression frameworks have shown that in order to attain whole robustness, it is required to assume that the error PDF has heavier density tails than those of the Student PDF \citep{DesGag2019, gagnon2020, gagnon2020PCR, hamura2020log}; in \cite{gagnon2020}, it is proved that it is sufficient to assume that the error PDF has tails which are log-regularly varying, a concept introduced in \cite{desgagne2015robustness}.

The trace $\sigma^\gamma$ left asymptotically by each outlier has an impact on the limiting posterior variability of all coefficients which is seen to be more or less significant depending on the sample size, the number of outliers, the number of covariates in the model and the degrees of freedom. When the sample size is large relatively to the number of outliers, the number of covariates and the degrees of freedom, the impact is small. The regularly-varying Student tails however make the convergence to the limiting posterior distribution slower, comparatively to other alternatives with heavier tails such as log-regularly tails, implying a slower (in this case, partial) resolution of conflict. In \autoref{fig:y_20} and \hyperref[table:1]{Table 1}, we present numerical results to make some of that concrete.\footnote{The code to reproduce all numerical results is available online; see ancillary files on \href{https://arxiv.org/abs/2204.02299}{arXiv:2204.02299}.} They are based on an analysis of a simulated data set with $n = 20$, $p = 2$, $(x_{12}, \ldots, x_{n2}) = (1, 2, \ldots, n)$, and where $y_1, \ldots, y_n$ were sampled using intercept and slope coefficients both equal to 1, an error scaling of 1 and errors sampled independently from the standard normal distribution; we then gradually increase the value of $y_n$. We observe the impact on the posterior mean of $\beta_2$ for different values of $\gamma$ in \autoref{fig:y_20}. In \hyperref[table:1]{Table 1}, we show the difference between the posterior means and standard deviations (SDs) of $\beta_2$ based on the limiting posterior distribution (in which $(1 / \sigma) f((y_n - \mathbf{x}_n^T \boldsymbol\beta) / \sigma)$ is replaced by $\sigma^\gamma$) and the posterior distribution without $y_n$ (corresponding to the limiting distribution as if the approach were wholly robust). From these results, we observe that models with degrees of freedom around 4 are almost as robust as the model with $\gamma = 1$, with a limited impact on the posterior coefficient variability due to their partial robustness, while being closer to normal linear regression, which is an advantage when non-outlying data points are normally distributed. The posterior means and SDs were computed using Hamiltonian Monte Carlo \citep{Duane1987} and Markov-chain samples of size 10,000,000 (see \autoref{sec:details_exp} for details). For an extensive simulation study, we refer the reader to \cite{gagnon2020}.

   \begin{figure}[ht]
 \begin{floatrow}
\ffigbox{%
  \includegraphics[width=0.50\textwidth]{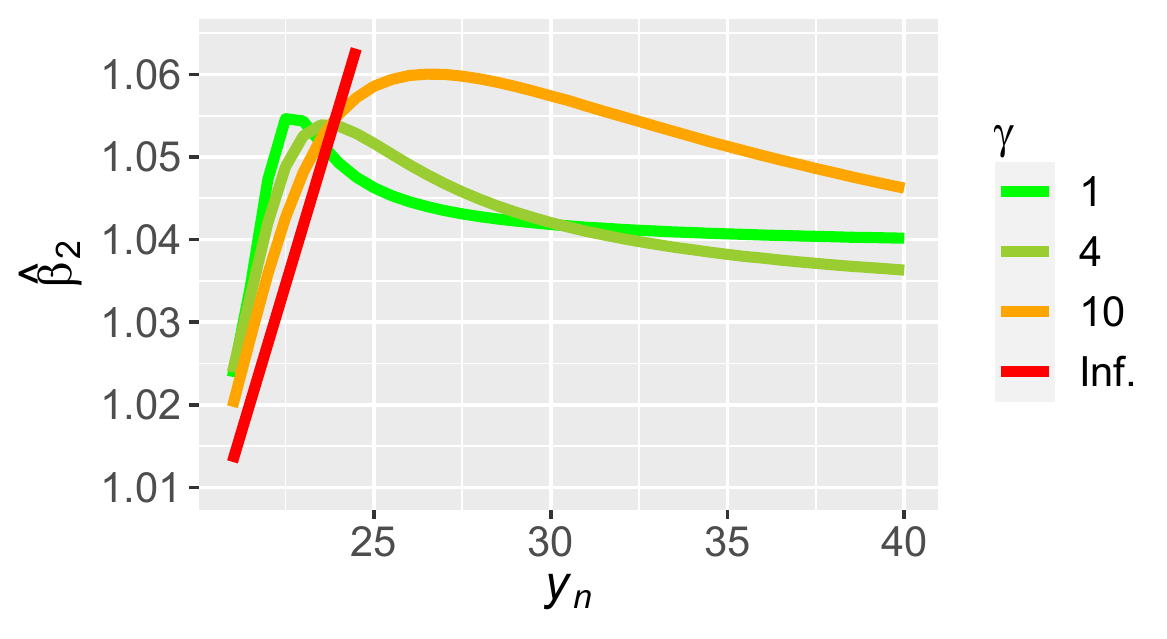}%
}{%
\vspace{-1mm}
  \caption{\small Impact on the posterior mean of $\beta_2$ as $y_{n}$ varies for different values of $\gamma$ (``Inf.'' represents the standard normal)}\label{fig:y_20}
}
\capbtabbox{%
\small
\begin{tabular}{l rrr}
\toprule
  &  & $\hat{\beta}_2$ & SD \cr
\midrule
\multirow{2}{*}{$\gamma = 1$} & limiting distribution & $1.039$ & $0.037$ \cr
                                                    & without outlier & $1.039$ & $0.035$ \cr
\midrule
\multirow{2}{*}{$\gamma = 4$} & limiting distribution & $1.030$ & $0.045$ \cr
                                                    & without outlier & $1.034$ & $0.038$ \cr
\midrule
\multirow{2}{*}{$\gamma = 10$} & limiting distribution & $1.022$ & $0.072$ \cr
                                                    & without outlier & $1.027$ & $0.040$ \cr
\midrule
$\gamma = +\infty$ & without outlier & $1.018$ & $0.043$ \cr
\bottomrule
\end{tabular}
}{%
  \caption{\small Posterior means and SDs of $\beta_2$ based on the limiting posterior distribution as $y_{n} \rightarrow \infty$ and the posterior distribution without this observation, for different values of $\gamma$ ($\gamma = +\infty$ represents the standard normal)}\label{table:1}
}
\end{floatrow}
\end{figure}
\normalsize


The theoretical result that we demonstrate is a convergence of the posterior distribution towards $\pi(\, \cdot \,, \, \cdot \mid \mathbf{y}_{\text{O}^\mathsf{c}})$, which has a density defined as follows:
\[
 \pi(\boldsymbol\beta, \sigma \mid \mathbf{y}_{\text{O}^\mathsf{c}}) := \pi(\boldsymbol\beta, \sigma) \, \sigma^{|\text{O}| \gamma} \left[\prod_{i \in \text{O}^\mathsf{c}} (1 / \sigma) f((y_i - \mathbf{x}_i^T \boldsymbol\beta) / \sigma)\right] \Bigg/ m(\mathbf{y}_{\text{O}^\mathsf{c}}), \quad \boldsymbol\beta \in \re^p, \sigma > 0,
\]
where $|\text{O}|$ is the cardinality of the set $\text{O}$, i.e.\ the number of outliers, and
\begin{align*}
 m(\mathbf{y}_{\text{O}^\mathsf{c}}) :=  \int_{\re^p} \int_0^\infty \pi(\boldsymbol\beta, \sigma) \, \sigma^{|\text{O}| \gamma} \left[\prod_{i \in \text{O}^\mathsf{c}} (1 / \sigma) f((y_i - \mathbf{x}_i^T \boldsymbol\beta) / \sigma)\right] \, \d\sigma \, \d\boldsymbol\beta.
\end{align*}
To prove the result, we essentially need to prove that we can interchange the limit and the integral in
\[
 \lim_{\omega \rightarrow \infty} \int_{\re^p} \int_0^\infty \pi(\boldsymbol\beta, \sigma) \left[\prod_{i = 1}^n (1 / \sigma) f((y_i - \mathbf{x}_i^T \boldsymbol\beta) / \sigma)\right] \Bigg/ \prod_{i \in \mathrm{O}}f(y_i) \, \d\sigma \, \d\boldsymbol\beta = \lim_{\omega \rightarrow \infty}  \frac{m(\mathbf{y})}{\prod_{i \in \mathrm{O}}f(y_i)},
\]
which is the main difficulty of proving our asymptotic result.

To be able to establish such a result, we need a guarantee that $\pi(\, \cdot \,, \, \cdot \mid \mathbf{y}_{\text{O}^\mathsf{c}})$ is well defined. The following proposition provides conditions under which this guarantee exists.
\begin{Proposition}\label{prop:proper_limiting}
Assume that $\pi(\boldsymbol\beta, \sigma) \leq \max(C, C / \sigma)$ for all $\boldsymbol\beta, \sigma$, where $C$ is a positive constant. If $n - |\mathrm{O}|(\gamma + 1) = |\mathrm{O}^\mathsf{c}| - |\mathrm{O}|\gamma  > p + 1$, then $\pi(\, \cdot \,, \, \cdot \mid \mathbf{y}_{\mathrm{O}^\mathsf{c}})$ is proper.
\end{Proposition}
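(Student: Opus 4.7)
The plan is to mirror the proof of Proposition 1 while accommodating the extra factor $\sigma^{|\mathrm{O}|\gamma}$ appearing in the integrand of $m(\mathbf{y}_{\mathrm{O}^\mathsf{c}})$. A direct reduction to Proposition 1 is unavailable: treating $\pi(\boldsymbol\beta,\sigma)\sigma^{|\mathrm{O}|\gamma}$ as a modified prior would violate the hypothesis $\pi \le \max(C,C/\sigma)$, since this product grows in $\sigma$. The extra $\sigma$-weight makes the tail of the integrand heavier at infinity, forcing a sharper use of the Student-$t$ polynomial decay.

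First, rewrite each non-outlier Student factor as $(1/\sigma)f((y_i-\mathbf{x}_i^T\boldsymbol\beta)/\sigma)=c_\gamma\,\sigma^{\gamma}[\gamma\sigma^{2}+(y_i-\mathbf{x}_i^T\boldsymbol\beta)^{2}]^{-(\gamma+1)/2}$, so that the integrand of $m(\mathbf{y}_{\mathrm{O}^\mathsf{c}})$ becomes, up to a constant,
\[
\pi(\boldsymbol\beta,\sigma)\,\sigma^{n\gamma}\prod_{i\in\mathrm{O}^\mathsf{c}}\bigl[\gamma\sigma^{2}+(y_i-\mathbf{x}_i^T\boldsymbol\beta)^{2}\bigr]^{-(\gamma+1)/2},
\]
using $n\gamma=|\mathrm{O}|\gamma+|\mathrm{O}^\mathsf{c}|\gamma$. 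Then split the integration at $\sigma=1$. On $\{\sigma\le 1\}$ the factor $\sigma^{n\gamma}$ is bounded by $1$, and a change of variables based on a rank-$p$ subset of $\{\mathbf{x}_i:i\in\mathrm{O}^\mathsf{c}\}$ turns $p$ Student factors into proper densities that integrate to a constant; the remaining $|\mathrm{O}^\mathsf{c}|-p$ factors are bounded via their polynomial tails, which at fixed $\sigma$ yields a contribution $O(\sigma^{\gamma})$ each (for generic residuals). Together with $\pi\le C/\sigma$, the $\sigma$-integrand is $O(\sigma^{(n-p)\gamma-1})$, integrable near $0$.

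On $\{\sigma>1\}$ apply the weighted AM--GM inequality
\[
\bigl[\gamma\sigma^{2}+u^{2}\bigr]^{-(\gamma+1)/2}\;\le\;C_t\,(\gamma\sigma^{2})^{-t(\gamma+1)/2}\,|u|^{-(1-t)(\gamma+1)},\quad t\in(0,1),
\]
to each of the $|\mathrm{O}^\mathsf{c}|$ quadratic factors, separating $\sigma$- and $\boldsymbol\beta$-decay. With $\pi\le C$ for $\sigma>1$, the $\sigma$-integral at infinity converges iff $t>(n\gamma+1)/[|\mathrm{O}^\mathsf{c}|(\gamma+1)]$; after a linear change of variables in $\boldsymbol\beta$, the $\boldsymbol\beta$-integral converges iff both $(1-t)(\gamma+1)|\mathrm{O}^\mathsf{c}|>p$ (radial decay at infinity) and $(1-t)(\gamma+1)<1$ (local integrability across the hyperplanes $\{y_i=\mathbf{x}_i^T\boldsymbol\beta\}$).

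The main difficulty is the simultaneous satisfiability of these three competing constraints on $t$: a short algebraic calculation shows they can all be met precisely when $|\mathrm{O}^\mathsf{c}|-|\mathrm{O}|\gamma>p+1$, so the stated threshold emerges naturally from the AM--GM balancing and is sharp for this line of argument. A minor additional subtlety is that the small-$\sigma$ change of variables implicitly relies on the regression residuals for the non-baseline observations being nonzero, a mild general-position condition on the design matrix that holds for generic data.
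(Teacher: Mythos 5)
Your opening claim --- that a reduction to the properness result behind Proposition 1 is ``unavailable'' --- is where the proposal goes wrong, and it sends you into a much harder direct computation than is needed. The paper does reduce to that result, just not by folding $\sigma^{|\mathrm{O}|\gamma}$ into the prior: it folds it into the likelihood. Since $\sigma \cdot (1/\sigma) f((y_i-\mathbf{x}_i^T\boldsymbol\beta)/\sigma) = f((y_i-\mathbf{x}_i^T\boldsymbol\beta)/\sigma) \leq C$, the factor $\sigma^{|\mathrm{O}|\gamma}$ cancels the $1/\sigma$ in $|\mathrm{O}|\gamma$ of the non-outlier PDF terms, and those terms are then bounded by the constant $C$. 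What remains is exactly the marginal likelihood of $|\mathrm{O}^\mathsf{c}| - |\mathrm{O}|\gamma$ data points under the original prior, which is finite by (the proof of) Proposition 2.1 of Gagnon et al.\ (2020) precisely when $|\mathrm{O}^\mathsf{c}| - |\mathrm{O}|\gamma > p+1$. That is the entire proof, and it is why the threshold has the form it does: each outlier's trace $\sigma^\gamma$ ``consumes'' $\gamma$ non-outliers.

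Your direct route is not without merit --- the large-$\sigma$ analysis via the bound $[\gamma\sigma^2+u^2]^{-(\gamma+1)/2} \leq (\gamma\sigma^2)^{-t(\gamma+1)/2}|u|^{-(1-t)(\gamma+1)}$ does correctly recover the threshold $|\mathrm{O}^\mathsf{c}| - |\mathrm{O}|\gamma > p+1$ from the compatibility of the three constraints on $t$, which is a nice consistency check --- but the small-$\sigma$ part has a genuine gap. First, the bookkeeping is inconsistent: you bound $\sigma^{n\gamma}$ by $1$ and then state a final exponent $O(\sigma^{(n-p)\gamma-1})$ that is only attainable if $\sigma^{n\gamma}$ is retained. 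Second, and more seriously, the claim that each of the $|\mathrm{O}^\mathsf{c}|-p$ remaining factors contributes $O(\sigma^\gamma)$ ``for generic residuals'' is not uniform over the domain of integration: after the change of variables the residuals $y_i - \mathbf{x}_i^T\boldsymbol\beta$ become functions of $(\mathbf{u},\sigma)$ that vanish on hypersurfaces, and near those the only available bound is $[\gamma\sigma^2 + r_i^2]^{-(\gamma+1)/2} \leq (\gamma\sigma^2)^{-(\gamma+1)/2}$, which destroys integrability at $\sigma = 0$. Controlling this trade-off (small $f(u_i)$ where the other residuals can vanish) is exactly the hard content of properness proofs for this model and cannot be dismissed as a general-position condition on the design; it is the part of the argument that the cited Proposition 2.1 exists to supply.
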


We now present the asymptotic result.

\begin{Theorem}\label{thm:1}
Assume that $\pi(\boldsymbol\beta, \sigma) \leq \max(C, C / \sigma)$ for all $\boldsymbol\beta, \sigma$, where $C$ is a positive constant. If $|\mathrm{O}^\mathsf{c}| \geq \max\{n/2+ p - 1/2, |\mathrm{O}|\gamma + p + 2\}$, then, as $\omega \rightarrow \infty$,
 \begin{description}
   \item[(a)] the asymptotic behaviour of the marginal distribution is: $m(\mathbf{y}) / \prod_{i \in \mathrm{O}}f(y_i)\rightarrow m(\mathbf{y}_{\mathrm{O}^\mathsf{c}})$;

     \item[(b)] the posterior density converges pointwise: for any $\boldsymbol\beta \in \re^p, \sigma > 0$, $\pi(\boldsymbol\beta, \sigma \mid \mathbf{y}) \rightarrow \pi(\boldsymbol\beta,\sigma\mid\mathbf{y}_{\mathrm{O}^\mathsf{c}})$;

\item[(c)] the posterior distribution converges: $\pi(\, \cdot \,, \, \cdot \mid \mathbf{y}) \rightarrow \pi(\, \cdot \,, \, \cdot \mid \mathbf{y}_{\mathrm{O}^\mathsf{c}})$.
\end{description}
\end{Theorem}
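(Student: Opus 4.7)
The plan is to establish (a) first and derive (b) and (c) from it. For (b), I would divide the numerator and denominator of $\pi(\boldsymbol\beta,\sigma\mid\mathbf{y})$ by $\prod_{i\in\mathrm{O}}f(y_i)$; the numerator converges pointwise to $\pi(\boldsymbol\beta,\sigma)\,\sigma^{|\mathrm{O}|\gamma}\prod_{i\in\mathrm{O}^\mathsf{c}}(1/\sigma)f((y_i-\mathbf{x}_i^T\boldsymbol\beta)/\sigma)$ by applying the limit \eqref{eqn:limit_PDF} in each outlier factor, while the denominator converges to $m(\mathbf{y}_{\mathrm{O}^\mathsf{c}})$ by (a). For (c), the hypothesis $|\mathrm{O}^\mathsf{c}|\geq |\mathrm{O}|\gamma+p+2$ implies $|\mathrm{O}^\mathsf{c}|-|\mathrm{O}|\gamma>p+1$, so \autoref{prop:proper_limiting} guarantees that $\pi(\,\cdot\,,\,\cdot\mid\mathbf{y}_{\mathrm{O}^\mathsf{c}})$ is a proper probability density, and Scheff\'e's lemma then upgrades the pointwise convergence in (b) to $L^1$ and hence total-variation convergence, which implies weak convergence.

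For (a), I would justify the interchange of limit and integral in
\[
 \frac{m(\mathbf{y})}{\prod_{i\in\mathrm{O}}f(y_i)}=\int_{\re^p}\!\int_0^\infty \pi(\boldsymbol\beta,\sigma)\prod_{i\in\mathrm{O}^\mathsf{c}}\frac{f((y_i-\mathbf{x}_i^T\boldsymbol\beta)/\sigma)}{\sigma}\prod_{i\in\mathrm{O}}R_i(\omega,\boldsymbol\beta,\sigma)\,\d\sigma\,\d\boldsymbol\beta,
\]
where $R_i:=[(1/\sigma)f((y_i-\mathbf{x}_i^T\boldsymbol\beta)/\sigma)]/f(y_i)\to\sigma^\gamma$ by \eqref{eqn:limit_PDF}, via the dominated convergence theorem after partitioning the parameter space into a ``safe'' region where every outlier is poorly fit and its complement. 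Concretely, I would set $A_\omega:=\{(\boldsymbol\beta,\sigma):|y_i-\mathbf{x}_i^T\boldsymbol\beta|\geq |y_i|/2\ \text{for every}\ i\in\mathrm{O}\}$. On $A_\omega$, the identity
\[
 R_i=\sigma^\gamma\left(\frac{\gamma+y_i^2}{\gamma\sigma^2+(y_i-\mathbf{x}_i^T\boldsymbol\beta)^2}\right)^{(\gamma+1)/2}
\]
together with $\gamma\sigma^2+(y_i-\mathbf{x}_i^T\boldsymbol\beta)^2\geq y_i^2/4$ yields a uniform bound $R_i\leq K\sigma^\gamma$ for all $\omega$ sufficiently large; the integrand is then dominated by a constant multiple of the limiting integrand, which is integrable by \autoref{prop:proper_limiting}. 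Since every fixed $(\boldsymbol\beta,\sigma)$ lies in $A_\omega$ for all large $\omega$ (because $|y_i|\to\infty$ while $\mathbf{x}_i^T\boldsymbol\beta$ stays fixed), the DCT gives $\int_{A_\omega}(\,\cdot\,)\to m(\mathbf{y}_{\mathrm{O}^\mathsf{c}})$.

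The main obstacle is showing that the integral over $A_\omega^\mathsf{c}$ is $o(1)$. On this set at least one index $i^\star\in\mathrm{O}$ satisfies $|\mathbf{x}_{i^\star}^T\boldsymbol\beta|>|y_{i^\star}|/2\to\infty$, and the naive bound $R_{i^\star}\leq f(0)/[\sigma f(y_{i^\star})]=O(|y_{i^\star}|^{\gamma+1}/\sigma)$ blows up, so the decay of the non-outlier factors must compensate. I would partition $A_\omega^\mathsf{c}$ into the finitely many sub-regions indexed by the non-empty subsets $J\subseteq\mathrm{O}$ of ``well-fit'' outliers, bound $\prod_{i\in J}R_i$ by its crude polynomial-in-$\omega$ ceiling, and use the Student tail estimate $(1/\sigma)f((y_i-\mathbf{x}_i^T\boldsymbol\beta)/\sigma)=O(\sigma^\gamma/|y_i-\mathbf{x}_i^T\boldsymbol\beta|^{\gamma+1})$ on the non-outliers, on whose sub-regions $\boldsymbol\beta$ is forced by the geometry of the covariates to carry a substantial norm. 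The extra head-room provided by $|\mathrm{O}^\mathsf{c}|\geq n/2+p-1/2$, equivalently $|\mathrm{O}^\mathsf{c}|\geq|\mathrm{O}|+2p-1$, is what I expect makes the net power of $\omega$ strictly negative on every sub-region, so that each contribution vanishes; summing them and combining with the safe-region contribution delivers (a), and hence the theorem.
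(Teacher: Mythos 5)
Your overall architecture coincides with the paper's: reduce (b) and (c) to (a) (with Scheff\'e's theorem and \autoref{prop:proper_limiting} exactly as you propose), prove (a) by dominated convergence after splitting the parameter space into a region where every outlier is poorly fit and its complement, and handle the complement via the geometric fact that a hyperplane fitting an outlier well can pass close to at most $p-1$ of the non-outlying design points, which is where $|\mathrm{O}^\mathsf{c}| \geq |\mathrm{O}|+2p-1$ enters. Your treatment of the safe region $A_\omega$ is correct and in fact slightly cleaner than the paper's: the single bound $R_i \leq K\sigma^\gamma$ obtained from $\gamma\sigma^2+(y_i-\mathbf{x}_i^T\boldsymbol\beta)^2\geq y_i^2/4$ works uniformly in $\sigma$, whereas the paper splits $\sigma\geq1$ from $\sigma<1$ and invokes \autoref{lemma:bound_student} with two different choices of $\nu$.

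The gap is in the region $A_\omega^\mathsf{c}$, which is precisely where the paper locates the main difficulty, and your $\omega$- and $\sigma$-bookkeeping there does not yet close. Two concrete problems. First, the $\omega$-count: with $|\mathrm{O}^\mathsf{c}|=|\mathrm{O}|+2p-1$ you have, after discarding the at most $p-1$ ``close'' non-outliers, at least $|\mathrm{O}|+p$ non-outliers with $|\mathbf{x}_j^T\boldsymbol\beta|\gtrsim\omega$; cancelling the ceiling $\omega^{|J|(\gamma+1)}$ uses $|J|$ of them and yields net $\omega$-power \emph{zero}, not strictly negative. You must spend one additional far non-outlier (bounding its term by $(1/\sigma)f(\omega/(2\kappa\sigma))\leq 2\kappa C/\omega$ via boundedness of $zf(z)$) to get decay, and then re-verify that the $n-(|J|+1)$ remaining likelihood factors still yield an integral that is bounded \emph{uniformly in $\omega$} --- the outliers' own PDF terms are among them and depend on $\omega$, so this requires the data-uniform marginal bound from Proposition 2.1 of \cite{gagnon2020}, not just finiteness. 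Second, and more seriously, the $\sigma$-count: your crude ceiling $R_{i}\leq f(0)/[\sigma f(y_{i})]$ contributes $\sigma^{-|J|}$, while each tail estimate $(1/\sigma)f((y_j-\mathbf{x}_j^T\boldsymbol\beta)/\sigma)=O(\sigma^\gamma/|y_j-\mathbf{x}_j^T\boldsymbol\beta|^{\gamma+1})$ contributes $\sigma^{+\gamma}$, so the dominating function acquires a factor of order $\sigma^{(|J|+1)\gamma-|J|}$ that must still be integrable at $\sigma\to\infty$ against the surviving $1/\sigma$ factors. This is exactly where the second hypothesis $|\mathrm{O}^\mathsf{c}|\geq|\mathrm{O}|\gamma+p+2$ (equivalently $|\mathrm{O}^\mathsf{c}|-p-|\mathrm{O}|\gamma\geq2$) must be invoked, and your sketch never uses it outside of properness; the paper's split into $\sigma\geq1$ (where \autoref{lemma:bound_student} with $\nu\propto\sigma$ absorbs the accumulated powers and the residual $\int_1^\infty\sigma^{-(|\mathrm{O}^\mathsf{c}|-p-|\mathrm{O}|\gamma)}\,\d\sigma$ is where the condition bites) and $\sigma<1$ (where the monotonicity of $|z|f(z)$ replaces it) is doing real work that a single unified estimate on $A_\omega^\mathsf{c}$ has to reproduce. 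Until that accounting is written out and shown to leave an integrable, $\omega$-uniform dominating function (or an integral that is $o(1)$), Result (a) --- and hence the theorem --- is not established.
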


For the Student linear regression, once the prior distribution has been set such that $\pi(\boldsymbol\beta, \sigma) \leq \max(C, C / \sigma)$ for all $\boldsymbol\beta, \sigma$, it is seen that \autoref{thm:1} holds as long as the number of non-outliers is large enough. A sufficient number of non-outliers is $|\mathrm{O}^\mathsf{c}| \geq \max\{n/2+ p - 1/2, |\mathrm{O}|\gamma + p + 2\}$ which is equivalent to having an upper bound on the number of outliers: $\mathrm{O}$ must be such that $|\mathrm{O}| \leq n/2 - p + 1/2$ and $|\mathrm{O}| \leq (n - p - 2) / (\gamma + 1)$. This condition suggests that the breakdown point, generally defined as the proportion of outliers $|\mathrm{O}| / n$ that an estimator can handle, is the minimum of $1/2 - (p - 1/2) / n$ and $(n - p - 2) / (n(\gamma + 1))$, which is close to $1/ (\gamma + 1)$ if $n$ is large relatively to $p$. This is another argument in favour of small degrees of freedom, with the smaller trace left asymptotically by outliers in $\pi(\, \cdot \,, \, \cdot \mid \mathbf{y}_{\text{O}^\mathsf{c}})$.

In \autoref{thm:1}, Result (a) represents the centrepiece; it leads relatively easily to the other results of the theorem, but its demonstration requires considerable work. The convergence of the posterior density in Result (b) enables to state that the maximum a posteriori estimate is partially robust. Given that this estimate corresponds to the maximum likelihood estimate when the prior is proportional to 1, the frequentist estimate is, as a result, also partially robust. This allows establishing a connection between Bayesian and frequentist robustness. Result (c) indicates that any estimation of $\boldsymbol\beta$ and $\sigma$ based on posterior quantiles (e.g.\ using posterior medians or Bayesian credible intervals) is partially robust to outliers. Note that the convergence of posterior expectations holds if it is instead assumed that $|\mathrm{O}^\mathsf{c}| \geq \max\{n/2+ p - 1/2, |\mathrm{O}|\gamma + p + 2\} + 1$. All these
results characterize the limiting behaviour of a variety of Bayes estimators. Finally, we note that in variable selection, when the joint posterior of the models and parameters is considered, this joint posterior converges if the prior distributions of the parameters of all models satisfy the upper bound in \autoref{thm:1} and if $|\mathrm{O}^\mathsf{c}| \geq \max\{n/2+ p_{\max} - 1/2, |\mathrm{O}|\gamma + p_{\max} + 2\}$.

\section{Efficiency}\label{sec:efficiency}

To study the efficiency of estimators produced by Bayesian Student linear regression, we consider another asymptotic framework than that of the previous section: we perform an analysis under a large-sample regime $n \rightarrow \infty$. The analysis rests heavily on the theoretical results in \cite{bunke1998asymptotic} about asymptotic behaviour of Bayes estimates under possibly incorrect models. The analysis consists essentially in verifying the assumptions of Theorems 2.1, 2.2 and 4.1 in that paper which are stated in great generalities, thus requiring considerable work. The analysis allows to establish that the posterior distribution concentrates as $n \rightarrow \infty$ around \textit{pseudo-true parameters} and that \textit{pseudo-Bayes estimators} are strongly consistent and asymptotically normally distributed; these estimators are called pseudo-Bayes because they converge towards the pseudo-true parameters, the latter being values which make the model as close as possible to the true generating process in a sense specified below. To perform the analysis, we assume, contrarily to before, that each $\mathbf{x}_i$ is an observation from a random vector $\mathbf{X}_i$. Also, we assume that the true generating model is the following: $\mathbf{Z}_1 := (Y_1, \mathbf{X}_1), \ldots, \mathbf{Z}_n := (Y_n, \mathbf{X}_n)$ are independent and identically distributed (IID) random variables such that $\mathbf{X}_i \sim \mu_\mathbf{X}$ and $Y_i = \mathbf{X}_i^T \boldsymbol\beta_0 + \sigma_0\varepsilon_i$ with $\boldsymbol\beta_0 \in \re^p$ being the true (fixed) coefficient vector, $\sigma_0 >0$ being the true (fixed) scale parameter, $\varepsilon_1, \ldots, \varepsilon_n \sim g$ being $n$ IID random variables, $g$ being a PDF and $\mu_\mathbf{X}$ being a probability measure. Given that measuring the efficiency of estimators in linear-regression frameworks is often done in the situation where normal linear regression is the gold standard to compare to the latter, we further assume that $g$ is the PDF of the standard normal distribution.

We will consider that all the components in $\mathbf{X}_i$ are random, except the first one (the intercept). The distribution $\mu_\mathbf{X}$ is thus that of all the components in $\mathbf{X}_i$, except the first one. We will continue to write $\mu_\mathbf{X}$ to simplify. This distribution thus has a density with respect to Lebesgue measure (because the covariates are all assumed to be continuous); this density is denoted by $f_\mathbf{X}$.

In our Bayesian model, we proceed as before, but we need to consider that the posterior distribution is a conditional distribution given both $y_1, \ldots, y_n$ and $\mathbf{x}_1, \ldots, \mathbf{x}_n$. We assume in our model that $\mathbf{Z}_1, \ldots, \mathbf{Z}_n$ are such that $Y_i = \mathbf{X}_i^T \boldsymbol\beta + \sigma\varepsilon_i$ for all $i$. We can show that, by assuming that $\mathbf{X}_1, \ldots, \mathbf{X}_n, \varepsilon_1, \ldots, \varepsilon_n, \boldsymbol\beta, \sigma$ are independent random variables, the posterior distribution does not in fact depend on the assumed distribution of $\mathbf{X}_i$; we can thus assume in our model that $\mathbf{X}_i \sim\mu_\mathbf{X}$, i.e.\ that each $\mathbf{X}_i$ has the correct distribution. We have, similarly to before, that $\varepsilon_i \mid \mathbf{X}_i, \boldsymbol\beta, \sigma \stackrel{\mathcal{D}}{=} \varepsilon_i \sim f, i = 1,\ldots, n$; it is thus assumed that each error has a Student distribution. The model is thus misspecified and the pseudo-true parameters are the \textit{closest} (in some sense) to $(\boldsymbol\beta_0, \sigma_0)$ when using a Student linear regression instead of a normal one. The pseudo-true parameters are characterized precisely below.

Let us define the conditional PDF of $Y_i$ given $\mathbf{X}_i = \mathbf{x}_i$ indexed by fixed $(\boldsymbol\beta, \sigma)$ under the Student model to be $p_{(\boldsymbol\beta, \sigma)}( \, \cdot \mid \mathbf{x}_i)$.
Let us also define the conditional PDF of $Y_i$ given $\mathbf{X}_i = \mathbf{x}_i$ under the true model to be $p_{0}( \, \cdot \mid \mathbf{x}_i)$.
The first step in the analysis is the identification of pseudo-true parameters $(\boldsymbol\beta^*, \sigma^*)$ which minimizes the divergence
\[
 K(\boldsymbol\beta, \sigma) := \E\left[\log\frac{p_{0}(Y \mid \mathbf{X}) \, f_\mathbf{X}(\mathbf{X})}{p_{(\boldsymbol\beta, \sigma)}(Y \mid \mathbf{X}) \, f_\mathbf{X}(\mathbf{X})}\right] = \E\left[\log\frac{p_{0}(Y \mid \mathbf{X})}{p_{(\boldsymbol\beta, \sigma)}(Y \mid \mathbf{X})}\right],
\]
where the expectations during this analysis are always taken under the true model, i.e.\ with $\mathbf{X} \sim \mu_\mathbf{X}$ and $Y \mid \mathbf{X} \sim p_{0}( \, \cdot \mid \mathbf{X})$. We now state a result which allows to characterize the pseudo-true parameters and presents conditions under which the posterior distribution concentrates around the pseudo-true parameters as $n \rightarrow \infty$ and the pseudo-Bayes estimators are strongly consistent and asymptotically normally distributed. The pseudo-Bayes estimators $(\hat{\boldsymbol\beta}_n, \hat{\sigma}_n)$ that we consider are posterior means; they are thus pseudo-Bayes with respect to the quadratic loss. Other pseudo-Bayes estimators can be considered under more technical assumptions. Note that the randomness in the posterior distribution and $(\hat{\boldsymbol\beta}_n, \hat{\sigma}_n)$ comes from the data points $\mathbf{Z}_1, \ldots, \mathbf{Z}_n$. 

\begin{Theorem}\label{thm:2}
 Consider the framework described in this section.
 \begin{description}
   \item[(a)] For any $\boldsymbol\beta_0, \sigma_0$ and $\gamma$, the function $K(\boldsymbol\beta, \sigma)$ has a unique minimum that is attained at $(\boldsymbol\beta_0, \sigma^*)$, where $\sigma^*$ is the (unique) solution to
\begin{align}\label{eqn:Thm_2_a}
        (\gamma + 1) \left[1 - \sqrt{2 \pi (\sigma / \sigma_0)^2 \gamma} \, \exp\left((\sigma / \sigma_0)^2 \gamma / 2\right) \Phi\left(-\sqrt{(\sigma / \sigma_0)^2 \gamma}\right)\right] - 1 = 0,
       \end{align}
       where $\Phi$ is the cumulative distribution function of the standard normal distribution.

  \item[(b)] Assume that $f_\mathbf{X}$ is bounded, $\E[\|\mathbf{X}\|^{4 (p + 2)}] < \infty$ and $\|\boldsymbol\beta\|^{1/32}\E[|\mathbf{X}^T \boldsymbol\beta|^{-1/32}] < \infty$ for any $\boldsymbol\beta$, where $\| \, \cdot \, \|$ is the Euclidean norm. Assume that the prior density $\pi(\, \cdot \,, \cdot \,)$ is strictly positive and such that $\pi(\boldsymbol\beta, \sigma) \leq \max(C, C / \sigma)$ for all $\boldsymbol\beta, \sigma$. The posterior distribution concentrates around $(\boldsymbol\beta_0, \sigma^*)$ as $n \rightarrow \infty$ with probability 1 and the posterior means are strongly consistent:
      \[
       \lim_{n \rightarrow \infty} \, (\hat{\boldsymbol\beta}_n, \hat{\sigma}_n) = (\boldsymbol\beta_0, \sigma^*) \quad \text{with probability 1}.
      \]

  \item[(c)] Further assume that $\E[\mathbf{X} \mathbf{X}^T]$ is a positive-definite matrix and that the prior density $\pi(\, \cdot \,, \cdot \,)$ is continuous. The posterior means are asymptotically normally distributed, i.e.\ we have the following convergence in distribution:
      \[
       \sqrt{n}((\hat{\boldsymbol\beta}_n, \hat{\sigma}_n) - (\boldsymbol\beta_0, \sigma^*)) \rightarrow \mathcal{N}(\mathbf{0}, \mathbf{M}^{-1} \mathbf{I} \mathbf{M}^{-1}),
      \]
      where
      \[
 \mathbf{M} := -\E[l''(\mathbf{Z}, (\boldsymbol\beta_0, \sigma^*))], \quad \mathbf{I} := \E[l'(\mathbf{Z}, (\boldsymbol\beta_0, \sigma^*)) l'(\mathbf{Z}, (\boldsymbol\beta_0, \sigma^*))^T],
\]
$l(\mathbf{z}, (\boldsymbol\beta, \sigma)) := \log p_{(\boldsymbol\beta, \sigma)}(y \mid \mathbf{x})$ and $l'(\mathbf{z}, (\boldsymbol\beta, \sigma))$ and $l''(\mathbf{z}, (\boldsymbol\beta, \sigma))$ are the vector of first derivatives and matrix of second derivatives with respect to $(\boldsymbol\beta, \sigma)$, respectively; see \autoref{sec_proofs} for the detailed expressions.
 \end{description}
\end{Theorem}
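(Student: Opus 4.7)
The plan is to handle part (a) by direct analysis of the function $K$, and to reduce parts (b) and (c) to verifying the hypotheses of Theorems 2.1, 2.2 and 4.1 of \cite{bunke1998asymptotic}.

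\textbf{Part (a).} Plugging in the Student log-density, the minimization of $K$ reduces to minimizing $\log\sigma + \tfrac{\gamma+1}{2}\,\E[\log(1 + (Y-\mathbf{X}^T\boldsymbol\beta)^2/(\sigma^2\gamma))]$. Under the true model, $Y - \mathbf{X}^T\boldsymbol\beta = \sigma_0\varepsilon + \mathbf{X}^T(\boldsymbol\beta_0-\boldsymbol\beta)$ with $\varepsilon\sim\mathcal{N}(0,1)$ independent of $\mathbf{X}$. For each $\sigma$, the first-order condition in $\boldsymbol\beta$ reads $\E[\mathbf{X}(Y-\mathbf{X}^T\boldsymbol\beta)/(1+(Y-\mathbf{X}^T\boldsymbol\beta)^2/(\sigma^2\gamma))]=0$, and is satisfied at $\boldsymbol\beta=\boldsymbol\beta_0$ by independence of $\mathbf{X}$ and $\varepsilon$ together with the oddness of $\varepsilon\mapsto\varepsilon/(1+c\varepsilon^2)$. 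Uniqueness in $\boldsymbol\beta$ I would obtain by a symmetrization argument conditionally on $\mathbf{X}$: the map $\xi\mapsto\E_\varepsilon[\log(1+(\sigma_0\varepsilon+\xi)^2/(\sigma^2\gamma))]$ is even, has positive second derivative at $0$ (obtained by Gaussian integration by parts, yielding a constant multiple of $\E[a^2/(1+a^2)]$ with $a=\sigma_0\varepsilon/(\sigma\sqrt\gamma)$), and is coercive because the integrand grows like $2\log|\xi|$. Substituting $\boldsymbol\beta_0$ and setting $\partial/\partial\sigma=0$ yields $1=(\gamma+1)\E[c\varepsilon^2/(1+c\varepsilon^2)]$ with $c=\sigma_0^2/(\sigma^2\gamma)$, which is equivalent to $(\gamma+1)(1-\E[1/(1+c\varepsilon^2)])=1$; evaluating the Gaussian integral via the Laplace-transform identity $1/(1+u^2)=\int_0^\infty e^{-(1+u^2)s}\,\d s$ gives $\E[1/(1+c\varepsilon^2)] = \sqrt{2\pi(\sigma/\sigma_0)^2\gamma}\,e^{(\sigma/\sigma_0)^2\gamma/2}\Phi(-\sqrt{(\sigma/\sigma_0)^2\gamma})$, which is exactly \eqref{eqn:Thm_2_a}. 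Strict monotonicity of $t\mapsto\sqrt{2\pi t}\,e^{t/2}\Phi(-\sqrt t)$ on $(0,\infty)$, which follows from standard Mills-ratio estimates, yields uniqueness of $\sigma^*$.

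\textbf{Parts (b) and (c).} I would apply Theorems 2.1 and 2.2 of \cite{bunke1998asymptotic} for (b) and Theorem 4.1 for (c). The hypotheses to verify are: (i) the unique minimizer of the Kullback divergence, supplied by (a); (ii) a uniform law of large numbers for $n^{-1}\sum l(\mathbf{Z}_i,(\boldsymbol\beta,\sigma))$ and its Hessian, which I would establish by a bracketing/dominated-convergence argument using the explicit Student log-density and $\E[\|\mathbf{X}\|^{4(p+2)}]<\infty$; (iii) a prior that is strictly positive at $(\boldsymbol\beta_0,\sigma^*)$ with polynomial growth control, provided by the bound $\pi(\boldsymbol\beta,\sigma)\leq\max(C,C/\sigma)$; (iv) uniform integrability of the posterior, for which the negative-moment assumption $\|\boldsymbol\beta\|^{1/32}\E[|\mathbf{X}^T\boldsymbol\beta|^{-1/32}]<\infty$ controls the possible divergence of $\log p_{(\boldsymbol\beta,\sigma)}$ as $\sigma\to 0$ and $\|\boldsymbol\beta\|\to\infty$. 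For (c), I would in addition establish three-times continuous differentiability of $l(\mathbf{z},\cdot)$ near $(\boldsymbol\beta_0,\sigma^*)$ with integrable dominating functions, obtained by direct differentiation of $\log p_{(\boldsymbol\beta,\sigma)}$ and crude polynomial bounds controlled by the moment assumptions. Invertibility of $\mathbf{M}$ I would prove by showing that its $(\boldsymbol\beta,\boldsymbol\beta)$-block is a positive scalar times $\E[\mathbf{X}\mathbf{X}^T]$ (positive definite by assumption), its $(\sigma,\sigma)$-entry is strictly positive by a computation similar to that in (a), and its off-diagonal blocks vanish by the oddness-of-$\varepsilon$ argument; continuity of $\pi$ at $(\boldsymbol\beta_0,\sigma^*)$ is directly assumed. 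The sandwich form $\mathbf{M}^{-1}\mathbf{I}\mathbf{M}^{-1}$ then comes out of Theorem 4.1.

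\textbf{Main obstacle.} The technical core lies in verifying the uniform-integrability conditions of the Bunke--Milhaud framework globally in $(\boldsymbol\beta,\sigma)$: one must simultaneously control $\log p_{(\boldsymbol\beta,\sigma)}(Y\mid\mathbf{X})$ and its derivatives as $\sigma\to 0$, $\sigma\to\infty$, and $\|\boldsymbol\beta\|\to\infty$. This is precisely where $\E[\|\mathbf{X}\|^{4(p+2)}]<\infty$ and $\|\boldsymbol\beta\|^{1/32}\E[|\mathbf{X}^T\boldsymbol\beta|^{-1/32}]<\infty$ do the heavy lifting, and translating the rather abstract Bunke--Milhaud conditions into concrete moment estimates for the Student-$t$ model is what I expect to consume most of the proof.
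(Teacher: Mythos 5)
Your overall strategy coincides with the paper's: part (a) by locating the minimizer of $K$ directly (symmetry in $\boldsymbol\beta$, then a one-dimensional equation in the scale, evaluated via the Gaussian integral $\E[1/(1+c\varepsilon^2)]=\sqrt{2\pi t}\,\ee^{t/2}\Phi(-\sqrt{t})$ with $t=(\sigma/\sigma_0)^2\gamma$, which matches \eqref{eqn:Thm_2_a}), and parts (b)--(c) by checking the hypotheses of Theorems 2.1, 2.2 and 4.1 of \cite{bunke1998asymptotic}. Your identification of the scale equation and of the block structure of $\mathbf{M}$ (off-diagonal block vanishing by oddness, $(\boldsymbol\beta,\boldsymbol\beta)$-block a positive multiple of $\E[\mathbf{X}\mathbf{X}^T]$) is exactly what the paper does.

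There is, however, a genuine gap in your part (a): the claim that the map $\xi\mapsto\E_\varepsilon[\log(1+(\sigma_0\varepsilon+\xi)^2/(\sigma^2\gamma))]$ has a unique global minimum at $\xi=0$ because it is even, has positive second derivative at $0$, and is coercive. Those three properties only give a \emph{local} minimum at $0$ together with growth at infinity; they do not exclude lower local minima at $|\xi|>0$, so the first-order condition plus these facts does not establish that $\boldsymbol\beta_0$ is the unique minimizer. What is actually needed (and what the paper proves, by splitting the integral at $\mathbf{x}^T\boldsymbol\xi$ and at $\mathbf{x}^T\boldsymbol\xi/2$, reflecting, and using the monotonicity of the normal density $g$) is the \emph{global} strict inequality $\int(\log(1+(u-a)^2/c)-\log(1+u^2/c))\,g(u)\,\d u>0$ for every $a\neq 0$; equivalently one can show the map is strictly increasing in $|\xi|$ by writing its derivative as $\int_0^\infty \frac{2u}{c+u^2}\,(g(u-\xi)-g(u+\xi))\,\d u>0$ for $\xi>0$. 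Your sketch should be repaired along these lines. Separately, your plan for (b)--(c) does not engage with the actual Bunke--Milhaud conditions: the exotic assumption $\|\boldsymbol\beta\|^{1/32}\E[|\mathbf{X}^T\boldsymbol\beta|^{-1/32}]<\infty$ enters not through uniform integrability of the log-likelihood but through the Hellinger-affinity decay condition $\int\mu_\mathbf{X}(\d\mathbf{x})\int[p_{\boldsymbol\theta}p_0]^{1/2}\,\d y<c\|\boldsymbol\theta\|^{-b_1}$, and the paper must also introduce a smooth increasing reparametrization of $\sigma$ onto $\re$ so that the parameter space is all of $\re^{p+1}$ and the behaviour as $\sigma\to 0$ and $\sigma\to\infty$ is polynomially controlled; as you anticipate, this verification is where the bulk of the work lies.
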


We now make a few remarks about \autoref{thm:2}. Firstly, it establishes that even if the Bayesian Student model is misspecified, the latter allows to retrieve the true regression coefficients $\boldsymbol\beta_0$ as $n \rightarrow \infty$, but the scale-parameter value around which the posterior distribution concentrates differs from $\sigma_0$ by a factor; see \autoref{fig:sigma_star}.

   \begin{figure}[ht]
 \centering
   \includegraphics[width=0.40\textwidth]{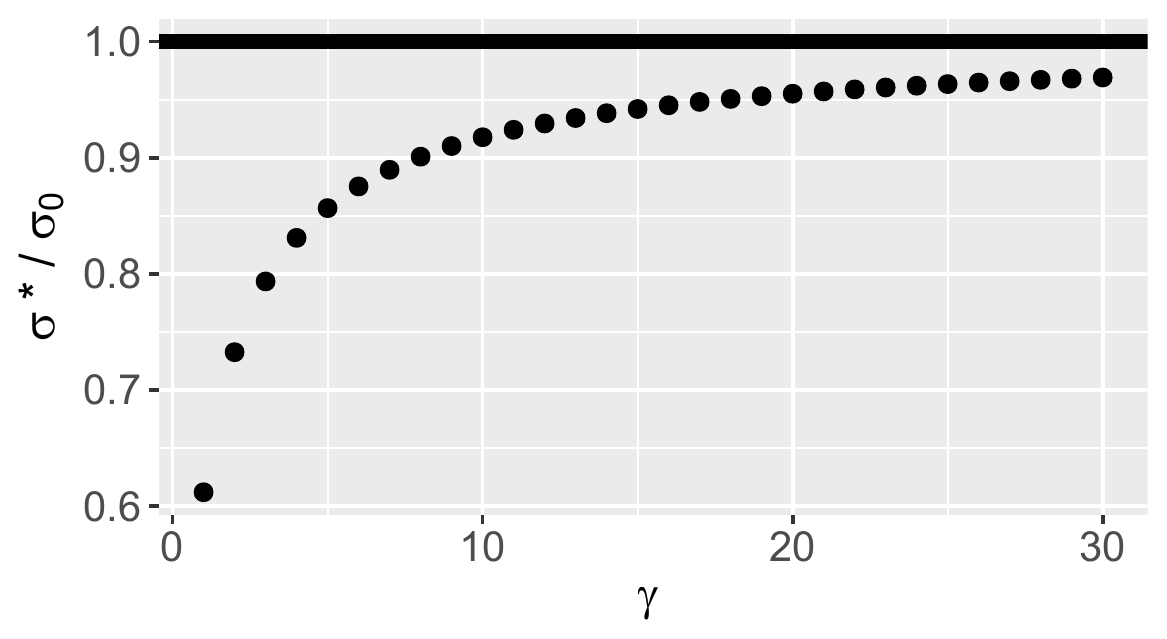}
  \vspace{-3mm}
\caption{$\sigma^* / \sigma_0$ as a function of $\gamma$; the values have been found by numerically solving \eqref{eqn:Thm_2_a}}\label{fig:sigma_star}
\end{figure}
\normalsize

Secondly, the assumptions made to derive the results are mainly about the distribution of $\mathbf{X}$ and are mainly regularity conditions. There is one that is more technical:  $\|\boldsymbol\beta\|^{1/32}\E[|\mathbf{X}^T \boldsymbol\beta|^{-1/32}] < \infty$ for any $\boldsymbol\beta$. It can be seen to be satisfied when the elements in $\mathbf{X}$ (except the intercept) are independent random variables each having a standard normal distribution. Indeed, in this case, for any $\boldsymbol\beta$,
\[
 \|\boldsymbol\beta\|^{1/32}\E[|\mathbf{X}^T \boldsymbol\beta|^{-1/32}] = \E\left[\left|\frac{\beta_1}{\|\boldsymbol\beta\|} + \frac{\sqrt{\beta_2^2 + \ldots + \beta_p^2}}{\|\boldsymbol\beta\|} \, Z\right|^{-1/32}\right] < \infty, \quad Z \sim \mathcal{N}(0, 1).
\]

We finish with a remark about the efficiency of the Bayesian estimators under the misspecified Student-linear-regression model. When one wants to compare the efficiency of an estimator to that of the benchmark estimator, a measure that is often used is the asymptotic relative efficiency. It consists in comparing the variances in the asymptotic distributions (which are often normal) after standardizing the estimators in the same manner. The benchmark estimator here is that associated with the normal linear regression when the latter is the true model, meaning the OLS estimator. The efficiency of the scale estimators cannot be compared because the posterior mean $\hat{\sigma}_n$ under the Student model does not converge to $\sigma_0$ as $n \rightarrow \infty$; we will thus focus on comparing the efficiency of the coefficient estimators, which is of main interest. Under the framework presented in this section, we have the following convergence in distribution for the OLS estimator:
\[
 \sqrt{n}(\hat{\boldsymbol\beta}_n^{\text{OLS}} - \boldsymbol\beta_0) \rightarrow \mathcal{N}(\mathbf{0}, \sigma_0^2 \, \E[\mathbf{X} \mathbf{X}^T]^{-1}).
\]
When focusing on the asymptotic behaviour of $\hat{\boldsymbol\beta}_n$ under the Student model, the asymptotic variance is relatively simple:
\[
 \sqrt{n}(\hat{\boldsymbol\beta}_n - \boldsymbol\beta_0) \rightarrow \mathcal{N}(\mathbf{0}, \sigma_0^2 \, \E[\mathbf{X} \mathbf{X}^T]^{-1} \phi(\gamma)),
\]
where
\[
 \phi(\gamma) := \E\left[\frac{Z^2}{\left(1+\frac{Z^2}{(\sigma^* / \sigma_0)^2 \gamma}\right)^2}\right] \E\left[\frac{1-\frac{Z^2}{(\sigma^* / \sigma_0)^2 \gamma}}{\left(1+\frac{Z^2}{(\sigma^* / \sigma_0)^2 \gamma}\right)^2}\right]^{-2}, \quad Z \sim \mathcal{N}(0, 1).
\]
We wrote $\phi$ as a function of $\gamma$ only, because the ratio $(\sigma^* / \sigma_0)$ is itself a function of $\gamma$ (as seen in \autoref{thm:2} and \autoref{fig:sigma_star}). \autoref{fig:phi} presents values of $\phi$ for several values of $\gamma$; the expectations in $\phi$ have been numerically evaluated using adaptive quadrature. With this figure and the results presented above, we are able to conclude that the Bayesian coefficient estimator under the misspecified Student-linear-regression model is comparable in terms of efficiency to the OLS coefficient estimator, in the sense that their asymptotic variances are the same, up to a factor $\phi(\gamma)$. This factor is seen to converge to 1 as $\gamma$ increases, which is expected given that the Student distribution resembles more and more the normal one as $\gamma$ increases. With $\gamma = 4$, the factor by which the asymptotic variance of the Bayesian coefficient estimator is inflated is around 10\%, which seems a reasonable price to pay for robustness in the presence of outliers.

 \begin{figure}[ht]
 \centering
   \includegraphics[width=0.40\textwidth]{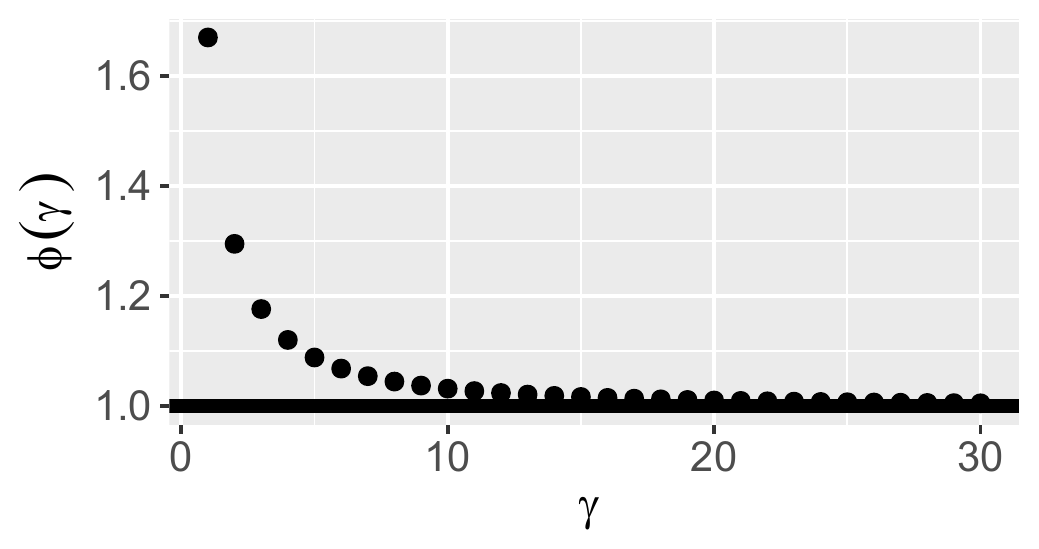}
  \vspace{-3mm}
\caption{ $\phi$ as a function of $\gamma$}\label{fig:phi}
\end{figure}
\normalsize

\section{Acknowledgements}

Philippe Gagnon acknowledges support from NSERC (Natural Sciences and Engineering Research Council of Canada) and FRQNT (Fonds de recherche du Qu\'{e}bec -- Nature et technologies). Also, the authors thank two anonymous referees for helpful suggestions that led to an improved manuscript.

\bibliographystyle{rss}
\bibliography{references}

\appendix

\section{Proofs}\label{sec_proofs}

\begin{proof}[Proof of \autoref{prop:proper}]
 The result follows from Proposition 2.1 in \cite{gagnon2020}. It can indeed be readily verified that the assumptions required to apply Proposition 2.1 are verified.
\end{proof}

\begin{proof}[Proof of \autoref{prop:proper_limiting}]
To prove this result, we show that $m(\mathbf{y}_{\text{O}^\mathsf{c}})$ is finite. We proceed as follows: we find an upper bound for the integral, which will turn out to be a special case of the integral that is shown to be finite in the proof of  Proposition 2.1 in \cite{gagnon2020}. Without loss of generality, we consider that $\text{O}^\mathsf{c} = \{1, \ldots, | \text{O}^\mathsf{c}|\}$. We have
 \begin{align*}
  m(\mathbf{y}_{\text{O}^\mathsf{c}}) &= \int_{\re^p} \int_0^\infty \pi(\boldsymbol\beta, \sigma) \, \sigma^{|\text{O}| \gamma} \left[\prod_{i = 1}^{| \text{O}^\mathsf{c}|} (1 / \sigma) f((y_i - \mathbf{x}_i^T \boldsymbol\beta) / \sigma)\right] \, \d\sigma \, \d\boldsymbol\beta \cr
  &\leq C^{|\text{O}| \gamma}\int_{\re^p} \int_0^\infty \pi(\boldsymbol\beta, \sigma) \left[\prod_{i = 1}^{|\text{O}^\mathsf{c}| - |\text{O}| \gamma} (1 / \sigma) f((y_i - \mathbf{x}_i^T \boldsymbol\beta) / \sigma)\right] \, \d\sigma \, \d\boldsymbol\beta,
 \end{align*}
 using that we can choose $C$ such that $f \leq C$. It can be readily verified that this integral is a special case of the integral in the proof of Proposition 2.1 in \cite{gagnon2020} that is shown to be finite when the number of terms in the product $|\text{O}^\mathsf{c}| - |\text{O}| \gamma = n - |\text{O}| - |\text{O}| \gamma > p + 1$.
\end{proof}

We now present a lemma that will be useful in the next proofs.

\begin{Lemma}\label{lemma:bound_student}
 For any $\nu \geq 1$, we have
 \[
  \frac{f(z / \nu)}{\nu^{\gamma + 1} f(z)} \leq 1, \quad \text{for all $z \in \re$}.
 \]
\end{Lemma}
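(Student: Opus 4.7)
The plan is to plug in the explicit form of the Student density and reduce the inequality to a one-line algebraic comparison. Recall that $f(z) = c_\gamma\,(1 + z^2/\gamma)^{-(\gamma+1)/2}$ for some normalizing constant $c_\gamma > 0$. Substituting, the constants $c_\gamma$ cancel and we are left with
\[
 \frac{f(z/\nu)}{\nu^{\gamma+1} f(z)} = \frac{1}{\nu^{\gamma+1}}\left(\frac{1 + z^2/\gamma}{1 + z^2/(\nu^2\gamma)}\right)^{(\gamma+1)/2}.
\]

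Next, I would absorb the factor $\nu^{\gamma+1} = (\nu^2)^{(\gamma+1)/2}$ into the bracket, yielding
\[
 \frac{f(z/\nu)}{\nu^{\gamma+1} f(z)} = \left(\frac{1 + z^2/\gamma}{\nu^2 + z^2/\gamma}\right)^{(\gamma+1)/2}.
\]
Since $\nu \geq 1$ implies $\nu^2 \geq 1$, the denominator of the bracketed ratio dominates the numerator, so the ratio is at most $1$. Raising a number in $[0,1]$ to the positive exponent $(\gamma+1)/2$ keeps it in $[0,1]$, which delivers the bound.

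There is no real obstacle here: the lemma reduces to the observation that $1 + z^2/\gamma \leq \nu^2 + z^2/\gamma$ whenever $\nu \geq 1$. The entire argument is a direct computation with the closed-form Student PDF, and equality is attained at $\nu = 1$ (for any $z$) and in the limit $|z|\to\infty$ (for any $\nu$), which serves as a useful sanity check on the calculation.
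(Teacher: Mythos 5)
Your proof is correct and follows essentially the same route as the paper's: both substitute the explicit Student density, absorb $\nu^{\gamma+1}=(\nu^2)^{(\gamma+1)/2}$ into the bracketed ratio, and reduce the claim to $1+z^2/\gamma\leq\nu^2+z^2/\gamma$ (equivalently, the paper's $1/\nu^2+\tfrac{z^2/\nu^2}{\gamma}\leq 1+\tfrac{z^2/\nu^2}{\gamma}$). Nothing further is needed.
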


\begin{proof}[Proof of \autoref{lemma:bound_student}]
 Given that $\nu \geq 1$, we have
 \[
  \frac{1}{\nu^2} + \frac{z^2 /  \nu^2}{\gamma} \leq 1 + \frac{z^2 /  \nu^2}{\gamma},
 \]
 implying that
 \begin{align*}
  \frac{f(z / \nu)}{\nu^{\gamma + 1} f(z)} = \frac{\left(1 + \frac{z^2 / \nu^2}{\gamma}\right)^{-\frac{\gamma + 1}{2}}}{\left(\frac{1}{\nu^2} + \frac{z^2 / \nu^2}{\gamma}\right)^{-\frac{\gamma + 1}{2}}} \leq 1.
 \end{align*}
\end{proof}

\begin{proof}[Proof of \autoref{thm:1}] We start with the proof of Result (a), which is quite lengthy. We next turn to the proofs of Results (b) and (c) which are shorter. Recall that we assume that $|\mathrm{O}^\mathsf{c}| \geq \max\{n/2+(p-1/2), |\mathrm{O}|\gamma + p + 2\}$, which implies that $|\mathrm{O}^\mathsf{c}| \geq n/2+(p-1/2)$ which is equivalent to $|\mathrm{O}^\mathsf{c}| \geq |\mathrm{O}|+2p-1$ because $n = |\mathrm{O}^\mathsf{c}| + |\mathrm{O}|$. For the proof, we will assume that $|\mathrm{O}| \geq 1$, i.e.\ that there is at least one outlier, otherwise the proof is trivial.

First, we note that $m(\mathbf{y}) < \infty$ for all $\omega$ and $m(\mathbf{y}_{\mathrm{O}^\mathsf{c}}) < \infty$ because $|\mathrm{O}^\mathsf{c}| \geq |\mathrm{O}|\gamma + p + 2$, which implies that $n > p + 1$. Therefore, both $\pi(\, \cdot \,, \, \cdot \mid \mathbf{y})$ (for all $\omega$) and $\pi(\, \cdot \,, \, \cdot \mid \mathbf{y}_{\mathrm{O}^\mathsf{c}})$ are proper.

We next observe that
\begin{align}
   \frac{m(\mathbf{y})}{m(\mathbf{y}_{\mathrm{O}^\mathsf{c}})\prod_{i \in \mathrm{O}} f(y_i)} &= \frac{m(\mathbf{y})}{m(\mathbf{y}_{\mathrm{O}^\mathsf{c}})\prod_{i \in \mathrm{O}} f(y_i)}
      \int_{\re^p}\int_{0}^{\infty}\pi(\boldsymbol\beta, \sigma \mid \mathbf{y}) \, \d\sigma \, \d\boldsymbol\beta \nonumber \\
   &=   \int_{\re^p}\int_{0}^{\infty}\frac{\pi(\boldsymbol\beta, \sigma) \prod_{i=1}^{n}
        (1/\sigma)f((y_i-\mathbf{x}_i^T\boldsymbol\beta)/\sigma)}{m(\mathbf{y}_{\mathrm{O}^\mathsf{c}}) \prod_{i \in \mathrm{O}} f(y_i)} \, \d\sigma \, \d\boldsymbol\beta \nonumber \\
   &=   \int_{\re^p}\int_{0}^{\infty}
        \pi(\boldsymbol\beta, \sigma\mid \mathbf{y}_{\mathrm{O}^\mathsf{c}})  \prod_{i \in \mathrm{O} } \frac{(1/\sigma)f((y_i-\mathbf{x}_i^T\boldsymbol\beta)/\sigma)}{\sigma^\gamma f(y_i)} \, \d\sigma \, \d\boldsymbol\beta. \label{eq:fct_thm1}
\end{align}
We show that the last integral converges towards 1 as $\omega\rightarrow\infty$ to prove Result~(a).
Let us assume for now that we are allowed to interchange the limit $\omega\rightarrow\infty$ and the integral:
\begin{align*}
&\lim_{\omega\rightarrow\infty}\int_{\re^p}\int_{0}^{\infty}
        \pi(\boldsymbol\beta, \sigma\mid \mathbf{y}_{\mathrm{O}^\mathsf{c}})  \prod_{i \in \mathrm{O} } \frac{(1/\sigma)f((y_i-\mathbf{x}_i^T\boldsymbol\beta)/\sigma)}{\sigma^\gamma f(y_i)} \, \d\sigma \, \d\boldsymbol\beta \cr
 &\qquad = \int_{\re^p}\int_{0}^{\infty}\lim_{\omega\rightarrow\infty}
  \pi(\boldsymbol\beta, \sigma\mid \mathbf{y}_{\mathrm{O}^\mathsf{c}})  \prod_{i \in \mathrm{O} } \frac{(1/\sigma)f((y_i-\mathbf{x}_i^T\boldsymbol\beta)/\sigma)}{\sigma^\gamma f(y_i)} \, \d\sigma \, \d\boldsymbol\beta  \\
 &\qquad = \int_{\re^p}\int_{0}^{\infty}
  \pi(\boldsymbol\beta, \sigma\mid \mathbf{y}_{\mathrm{O}^\mathsf{c}}) \, \d\sigma \, \d\boldsymbol\beta = 1,
\end{align*}
using \eqref{eqn:limit_PDF} and \autoref{prop:proper_limiting}. To prove that we are indeed allowed to interchange the limit and the integral, we use Lebesgue's dominated convergence theorem. Note that pointwise convergence is sufficient, for any value of $\boldsymbol\beta\in\re^p$ and $\sigma>0$, once the limit is inside the integral. In order to use Lebesgue's dominated convergence theorem, we need to  prove that the integrand is bounded above by an integrable function of $\boldsymbol\beta$ and $\sigma$ that does not depend on $\omega$, for any value of $\omega\ge \yo$, where $\yo$ is a constant. The constant $\yo$ can be chosen as large as we want, and minimum values for $\yo$ will be given throughout the proof.

 As mentioned, the main difficulty in proving \autoref{thm:1} is to find an upper bound for the integrand. To facilitate the understanding of the technical arguments that follow, we now present a sketch of how we will proceed. The goal is essentially to get rid of the terms $f(y_i)$ for all $i \in \mathrm{O}$ in the denominator of the integrand in \eqref{eq:fct_thm1} because those are small (given that such $y_i$ behave as $\omega$ which we view as being large). Consider $i \in \mathrm{O}$; when $|\mathbf{x}_i^T\boldsymbol\beta| \leq \omega / 2$, say, $|y_i-\mathbf{x}_i^T\boldsymbol\beta|$ is large and behaves like $\omega /2$, and for $\sigma \geq 1$, \autoref{lemma:bound_student} can be used to bound
\[
\frac{(1/\sigma)f((y_i-\mathbf{x}_i^T\boldsymbol\beta)/\sigma)}{\sigma^\gamma f(y_i)}.
\]
 When $|\mathbf{x}_i^T\boldsymbol\beta| > \omega / 2$, we are not guaranteed that $|y_i-\mathbf{x}_i^T\boldsymbol\beta|$ is large and cannot thus use the PDF term of the outlier to bound $1/f(y_i)$. We thus have to resort to non-outliers. With non-outliers, we consider $y_j$ as fixed, and it is only when $|\mathbf{x}_j^T\boldsymbol\beta|$ is large that the PDF terms $(1/\sigma)f((y_j-\mathbf{x}_j^T\boldsymbol\beta)/\sigma)$ can be used to bound $1/f(y_i)$.
 The strategy used below to deal with the situation is to divide the parameter space  in mutually exclusive areas for which we know exactly in which case we are: either we can use the outlier PDF term to bound $1/f(y_i)$ or not; in the latter case, we know that we have sufficiently non-outliers such that $|\mathbf{x}_j^T\boldsymbol\beta|$ is large. To have a precise control over the number of non-outliers such that $|\mathbf{x}_j^T\boldsymbol\beta|$ is large, we prove that when $|\mathbf{x}_i^T\boldsymbol\beta| > \omega / 2$, implying that we cannot use the PDF term of the outlier to bound $1/f(y_i)$, we have a maximum of $p - 1$ non-outliers such that $|\mathbf{x}_j^T\boldsymbol\beta|$ is not large, meaning essentially that when the outlying point $(y_i, \mathbf{x}_i)$ is not so far from the hyperplane defined by $\boldsymbol\beta$, there are a maximum of $p-1$ non-outlying points $(y_j, \mathbf{x}_j)$ that are not so far from the hyperplane defined by $\boldsymbol\beta$. To understand why that happens, think of how $p$ points define an hyperplane of dimension $p-1$ (see \autoref{fig:proof}). Using that $|\mathrm{O}^\mathsf{c}| \geq |\mathrm{O}|+2p-1$, we know that at least $|\mathrm{O}|+p$ non-outlying points are such that $|\mathbf{x}_j^T\boldsymbol\beta|$ is large, which is sufficient to bound the terms $1/f(y_i)$ (in the worst case, there are $|\mathrm{O}|$ of these terms), given that $p$ non-outlying points will be used to obtain a finite integral.
 \begin{figure}[ht]
 \centering
   \includegraphics[width=0.45\textwidth]{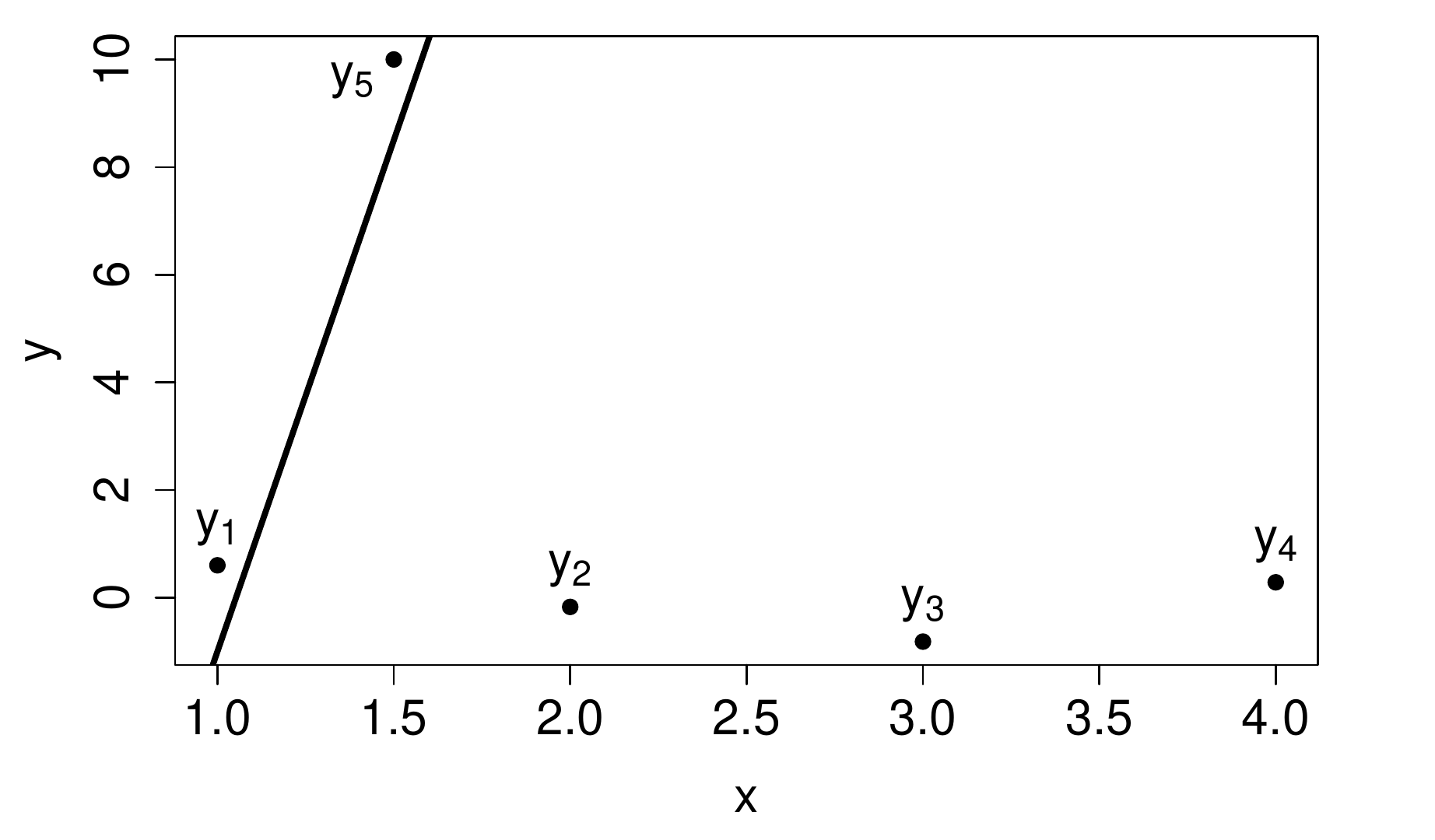}
  \vspace{-3mm}
\caption{Example of a case where a line passes close to a non-outlier and an outlier}\label{fig:proof}
\end{figure}
\normalsize

Let us now continue with the formal proof. In order to bound the integrand in \eqref{eq:fct_thm1}, we first divide the domain of integration of $\sigma$ into two areas: $1\leq\sigma<\infty$ and $0<\sigma<1$. We want to separately analyse the area where the ratio $1/\sigma$ approaches infinity.

We assumed that $y_i$ can be written as $y_i=a_i+b_i \omega$, where $\omega\rightarrow\infty$, and $b_i$ is a constant such that $b_i\ne 0$ if $i \in \mathrm{O}$. Therefore, the ranking of the elements in the set $\{|y_i| : i \in \mathrm{O}\}$ is primarily determined by the values $|b_1|,\ldots,|b_n|$, and we can choose the constant $\yo$ larger than a certain threshold to ensure that this ranking remains unchanged for all $\omega\ge \yo$. Without loss of generality, we assume for convenience that
\begin{equation*}
\omega=\min_{\{i:\,i \in \mathrm{O}\}}|y_i| \hspace{5mm} \text{ and consequently }\hspace{5mm} \min_{\{i:\,i \in \mathrm{O}\}} |b_i|=1.
\end{equation*}

We now bound above the integrand on the first area.

\textbf{Area~1:} Consider $1\le\sigma<\infty$ and assume without loss of generality that $y_1,\ldots,$ $y_{|\mathrm{O}|+2p-1}$ are $|\mathrm{O}|+2p-1$ non-outliers (therefore $\{1, \ldots, |\mathrm{O}|+2p-1\} \subset \mathrm{O}^\mathsf{c}$). We have
\begin{align*}
&\pi(\boldsymbol\beta, \sigma\mid \mathbf{y}_{\mathrm{O}^\mathsf{c}})  \prod_{i \in \mathrm{O} } \frac{(1/\sigma)f((y_i-\mathbf{x}_i^T\boldsymbol\beta)/\sigma)}{\sigma^\gamma f(y_i)} \cr
&\leq C^{|\mathrm{O}^\mathsf{c}| - (|\mathrm{O}|+2p-1)} \frac{\pi(\boldsymbol\beta,\sigma)}{m(\mathbf{y}_{\mathrm{O}^\mathsf{c}}) \, \sigma^{|\mathrm{O}^\mathsf{c}| - p - |\mathrm{O}|\gamma}} \prod_{i=1}^{p} (1/\sigma)f((y_i-\mathbf{x}_i^T\boldsymbol\beta)/\sigma)  \prod_{i = p + 1}^{|\mathrm{O}|+2p-1} f((y_i-\mathbf{x}_i^T\boldsymbol\beta)/\sigma) \prod_{i \in \mathrm{O} } \frac{(1/\sigma)f((y_i-\mathbf{x}_i^T\boldsymbol\beta)/\sigma)}{\sigma^\gamma f(y_i)} \cr
&\leq C^{|\mathrm{O}^\mathsf{c}| - |\mathrm{O}|-2p+2} \frac{1}{m(\mathbf{y}_{\mathrm{O}^\mathsf{c}}) \, \sigma^{|\mathrm{O}^\mathsf{c}| - p - |\mathrm{O}|\gamma}} \prod_{i=1}^{p} (1/\sigma)f((y_i-\mathbf{x}_i^T\boldsymbol\beta)/\sigma)  \prod_{i = p + 1}^{|\mathrm{O}|+2p-1} f((y_i-\mathbf{x}_i^T\boldsymbol\beta)/\sigma) \prod_{i \in \mathrm{O} } \frac{(1/\sigma)f((y_i-\mathbf{x}_i^T\boldsymbol\beta)/\sigma)}{\sigma^\gamma f(y_i)},
\end{align*}
using that we can choose $C$ such that $f \leq C$ and that $\pi(\boldsymbol\beta, \sigma) \leq C\max(1, 1 / \sigma) = C$.

Now, we prove that
\[
 \frac{1}{\sigma^{|\mathrm{O}^\mathsf{c}| - p - |\mathrm{O}|\gamma}} \prod_{i=1}^{p} (1/\sigma)f((y_i-\mathbf{x}_i^T\boldsymbol\beta)/\sigma)
\]
is an integrable function of $\boldsymbol\beta$ and $\sigma$ (note that it does not depend on $\omega$), and next we prove that
\begin{align}\label{eqn:fct_to_bound}
 \prod_{i = p + 1}^{|\mathrm{O}|+2p-1} f((y_i-\mathbf{x}_i^T\boldsymbol\beta)/\sigma) \prod_{i \in \mathrm{O} } \frac{(1/\sigma)f((y_i-\mathbf{x}_i^T\boldsymbol\beta)/\sigma)}{\sigma^\gamma f(y_i)}
\end{align}
is bounded above by a constant. We have
\begin{align*}
&\int_{1}^{\infty}\frac{1}{\sigma^{|\mathrm{O}^\mathsf{c}| - p - |\mathrm{O}|\gamma}} \int_{\re^p}\prod_{i=1}^{p} (1/\sigma)f((y_i-\mathbf{x}_i^T\boldsymbol\beta)/\sigma) \, \d\boldsymbol\beta \, \d\sigma \cr
&\qquad=\abs{\text{det}\left(\begin{array}{c}\mathbf{x}_1^T \cr \vdots \cr \mathbf{x}_p^T\end{array}\right)}^{-1}  \int_{1}^{\infty}\frac{1}{\sigma^{|\mathrm{O}^\mathsf{c}| - p - |\mathrm{O}|\gamma}}\, \d\sigma<\infty,
\end{align*}
using the change of variables $u_i = (y_i - \mathbf{x}_i^T\boldsymbol\beta) / \sigma$, $i=1,\ldots,p$, and that $|\mathrm{O}^\mathsf{c}| - p - |\mathrm{O}|\gamma\geq  2$. The determinant is different from 0 because all the explanatory variables are continuous.

In order to prove that \eqref{eqn:fct_to_bound} is bounded by a constant, we split the domain of $\boldsymbol\beta$ as follows:
\begin{align}\label{eqn_domain_beta}
 &\re^p=\left[\cap_i \mathcal{O}_i^\mathsf{c} \right]\cup \left[\cup_i \left( \mathcal{O}_i\cap\left(\cap_{i_1} \mathcal{F}_{i_1}^\mathsf{c}\right)\right)\right]\cup \left[\cup_{i,i_1}\left(\mathcal{O}_i\cap \mathcal{F}_{i_1}\cap\left(\cap_{i_2\neq i_1}\mathcal{F}_{i_2}^\mathsf{c}\right)\right)\right] \cr
 &\cup\cdots\cup \left[\cup_{i,i_1,\ldots,i_{p-1} (i_j\neq i_s \, \forall i_j,i_s \text{ s.t. } j\neq s)}\left(\mathcal{O}_i\cap \mathcal{F}_{i_1}\cap \cdots\cap\mathcal{F}_{i_{p-1}}\cap \left(\cap_{i_p\neq i_1,\ldots,i_{p-1}}\mathcal{F}_{i_p}^\mathsf{c}\right)\right)\right] \cr
 &\qquad\qquad\qquad \cup \left[\cup_{i,i_1,\ldots,i_{p} (i_j\neq i_s \, \forall i_j,i_s \text{ s.t. } j\neq s)}\left(\mathcal{O}_i\cap \mathcal{F}_{i_1}\cap \cdots\cap\mathcal{F}_{i_{p}}\right)\right],
\end{align}
where
\begin{align}\label{def_O_i}
\mathcal{O}_i&:=\{\boldsymbol\beta:|y_i - \mathbf{x}_i^T\boldsymbol\beta| < \omega/2\},\forall i\in \mathcal{I}_\mathcal{O},
\end{align}
\begin{align}\label{def_F_i}
\mathcal{F}_i&:=\{\boldsymbol\beta:|\mathbf{x}_i^T\boldsymbol\beta|<\omega/\kappa\},\forall i\in\mathcal{I}_\mathcal{F},
\end{align}
 $\mathcal{I}_\mathcal{O}:= \mathrm{O}$ and $\mathcal{I}_\mathcal{F}:=\{p+1,\ldots,|\mathrm{O}|+2p-1\}$ being the sets of indexes of outliers and remaining fixed observations (non-outliers) among observations $1$ to $|\mathrm{O}|+2p-1$, respectively, $\kappa$ being a positive constant to be defined. We find an upper bound on each of these subsets, and because there is a finite number of subsets, we will be able to bound \eqref{eqn:fct_to_bound} by the maximal bound. Note that we change the notation of $\mathrm{O}$ to $\mathcal{I}_\mathcal{O}$ to be aligned with $\mathcal{I}_\mathcal{F}$; it will facilitate the reading of the rest of the proof.

 The set $\mathcal{O}_{i}$ represents the hyperplanes $\mathbf{x}_i^T\boldsymbol\beta$ characterized by the different values of $\boldsymbol\beta$ that satisfy $|y_i - \mathbf{x}_i^T\boldsymbol\beta|< \omega/2$. In other words, it represents the hyperplanes that pass at a vertical distance of less than $\omega/2$ of the point $(\mathbf{x}_i, y_i)$, which is considered as an outlier given that $i\in \mathcal{I}_\mathcal{O}$. Analogously, the set $\mathcal{F}_{i}$ represents the hyperplanes that pass at a vertical distance of less than $\omega/\kappa$ of the point $(\mathbf{x}_i, 0)$, which is considered to be a non-outlier. Therefore, the set $\cap_i \mathcal{O}_i^\mathsf{c}$ represents the hyperplanes that pass at a vertical distance of at least $\omega/2$ of all the points $(\mathbf{x}_i, y_i)$ with $i\in \mathcal{I}_\mathcal{O}$ (all the outliers). The set $\cup_i (\mathcal{O}_i\cap(\cap_{i_1} \mathcal{F}_{i_1}^\mathsf{c}))$ represents the hyperplanes that pass at a vertical distance of less than $\omega/2$ of at least one outlier $(\mathbf{x}_i, y_i)$, but at a vertical distance of at least $\omega/\kappa$ of all the points $(\mathbf{x}_i, 0)$ (the non-outliers). For each $i_1\in\mathcal{I}_\mathcal{F}$, the set $\cup_i(\mathcal{O}_i\cap\mathcal{F}_{i_1}\cap(\cap_{i_2\neq i_1} \mathcal{F}_{i_2}^\mathsf{c}))$ represents the hyperplanes that pass at a vertical distance of less than $\omega/2$ of at least one outlier $(\mathbf{x}_i, y_i)$, at a vertical distance of less than $\omega/\kappa$ of the point $(\mathbf{x}_{i_1}, 0)$ (a non-outlier), but at a vertical distance of at least $\omega/\kappa$ of all the other non-outliers. And so on.

 Now, we claim that $\mathcal{O}_i\cap \mathcal{F}_{i_1}\cap \cdots\cap\mathcal{F}_{i_{p}}=\varnothing$ for all $i,i_1,\ldots,i_{p}$ with $i_j\neq i_s,\forall i_j,i_s$ such that $j\neq s$, meaning that there is no hyperplane that passes at a vertical distance of less than $\omega/2$ of the outlier $(\mathbf{x}_i, y_i)$ and at the same time at a vertical distance of less than $\omega/\kappa$ of $p$ points $(\mathbf{x}_{i_j},0)$. To prove this, we use the fact that $\mathbf{x}_i$ (a vector of size $p$) can be expressed as a linear combination of $\mathbf{x}_{i_1},\ldots,\mathbf{x}_{i_{p}}$. This is true because all explanatory variables are continuous, therefore the space spanned by the vectors $\mathbf{x}_{i_1}, \ldots, \mathbf{x}_{i_p}$ has dimension $p$. As a result, considering that $\boldsymbol\beta\in \mathcal{F}_{i_1}\cap \cdots\cap\mathcal{F}_{i_{p}}$ and $\mathbf{x}_i=\sum_{s=1}^{p} c_s \mathbf{x}_{i_s}$ for some $c_1, \ldots, c_{p}\in\re$, we have
\begin{align*}
|y_i - \mathbf{x}_i^T\boldsymbol\beta| =  \left|y_i - \left(\sum_{s=1}^{p} c_s \mathbf{x}_{i_s}\right)^T \boldsymbol\beta\right|\za{\geq} \left| |y_i| - \left|\sum_{s=1}^{p}c_s\mathbf{x}_{i_s}^T\boldsymbol\beta\right| \right| &\zb{\geq}\left|\omega-\frac{\omega}{\kappa}\sum_{s=1}^{p}|c_s|\right| \cr
&\zc{\geq}\omega-\frac{\omega}{2}.
\end{align*}
 In Step $a$, we use the reverse triangle inequality. In Step $b$, we use that $\omega = \min_{\{i:\,i \in \mathrm{O}\}}|y_i|$ and $|\sum_{s=1}^{p} c_s \mathbf{x}_{i_s}^T\boldsymbol\beta|\leq\sum_{s=1}^{p}|c_s||\mathbf{x}_{i_s}^T\boldsymbol\beta| \leq \sum_{s=1}^{p}|c_s|\omega/\kappa$ because $\boldsymbol\beta\in \mathcal{F}_{i_1}\cap \cdots\cap\mathcal{F}_{i_{p}}$, which implies that $|\mathbf{x}_i^T\boldsymbol\beta|<\omega/\kappa$ for all $s \in \{i_1,\ldots,i_p\}$. In Step $c$, we consider that the constant $\kappa$ is such that $\kappa \geq 2 \sum_{s=1}^{p}|c_s|$ (we define $\kappa$ such that it satisfies this inequality for any combination of $i$ and $i_1,\ldots,i_{p}$; without loss of generality we consider that $\kappa \geq 1$). Therefore, we have that if $\boldsymbol\beta\in \mathcal{F}_{i_1}\cap \cdots\cap\mathcal{F}_{i_{p}}$, then $\boldsymbol\beta\notin \mathcal{O}_i$. This proves that $\mathcal{O}_i\cap \mathcal{F}_{i_1}\cap \cdots\cap\mathcal{F}_{i_{p}} = \varnothing$ for all $i,i_1,\ldots,i_{p}$ with $i_j\neq i_s,\forall i_j,i_s$ such that $j\neq s$. This in turn implies that (\ref{eqn_domain_beta}) can be rewritten as
 \begin{align*}
 &\re^p=\left[\cap_i \mathcal{O}_i^\mathsf{c} \right]\cup \left[\cup_i \left( \mathcal{O}_i\cap\left(\cap_{i_1} \mathcal{F}_{i_1}^\mathsf{c}\right)\right)\right]\cup \left[\cup_{i,i_1}\left(\mathcal{O}_i\cap \mathcal{F}_{i_1}\cap\left(\cap_{i_2\neq i_1}\mathcal{F}_{i_2}^\mathsf{c}\right)\right)\right] \cr
 &\cup\cdots\cup \left[\cup_{i,i_1,\ldots,i_{p-1} (i_j\neq i_s \, \forall i_j,i_s \text{ s.t. } j\neq s)}\left(\mathcal{O}_i\cap \mathcal{F}_{i_1}\cap \cdots\cap\mathcal{F}_{i_{p-1}}\cap \left(\cap_{i_p\neq i_1,\ldots,i_{p-1}}\mathcal{F}_{i_p}^\mathsf{c}\right)\right)\right].
\end{align*}
This decomposition of $\re^p$ is comprised of $1+\sum_{i=0}^{p-1} {{|\mathrm{O}| + p - 1} \choose {i}}$ mutually exclusive sets given by $\cap_i \mathcal{O}_i^\mathsf{c}$, $\cup_i ( \mathcal{O}_i\cap(\cap_{i_1} \mathcal{F}_{i_1}^\mathsf{c}))$, $\cup_{i}(\mathcal{O}_i\cap \mathcal{F}_{i_1}\cap(\cap_{i_2\neq i_1}\mathcal{F}_{i_2}^\mathsf{c}))$ for $i_1\in\mathcal{I}_\mathcal{F} $, and so on.

We are now ready to bound \eqref{eqn:fct_to_bound}. We first show that the function is bounded on $\boldsymbol\beta\in \cap_i \mathcal{O}_i^\mathsf{c}$. Recall that $1\le\sigma < \infty$. For all $i\in\mathcal{I}_\mathcal{O}$, we have
$$
 \frac{(1/\sigma) f((y_i - \mathbf{x}_i^T\boldsymbol\beta )/\sigma)}{\sigma^{\gamma} f(y_i)}\leq \frac{f(\omega/(2\sigma))}{\sigma^{\gamma + 1} f(2 |b_i| \omega)}\leq (4|b_i|)^{\gamma + 1},
$$
using the monotonicity of $f$ twice: 1) $|y_i - \mathbf{x}_i^T\boldsymbol\beta|/\sigma\geq \omega/(2\sigma)$, and 2) $|y_i| \leq |a_i + b_i \omega| \leq |a_i| + |b_i| \omega \leq 2 |b_i| \omega$, and then \autoref{lemma:bound_student} with $z = 2|b_i|\omega$ and $\nu = 4 |b_i| \sigma \geq 4$. Therefore, on $\boldsymbol\beta\in \cap_i \mathcal{O}_i^\mathsf{c}$ and $1\le\sigma < \infty$,
$$
\prod_{i = p + 1}^{|\mathrm{O}|+2p-1} f((y_i-\mathbf{x}_i^T\boldsymbol\beta)/\sigma) \prod_{i \in \mathcal{I}_\mathcal{O}} \frac{(1/\sigma)f((y_i-\mathbf{x}_i^T\boldsymbol\beta)/\sigma)}{\sigma^\gamma f(y_i)}\leq C^{|\mathrm{O}|+p-1} \prod_{i \in \mathcal{I}_\mathcal{O}} (4|b_i|)^{\gamma + 1},
$$
using that $C$ can be chosen such that $f \leq C$.

Now, we consider the area defined by: $1\le\sigma < \infty$ and $\boldsymbol\beta$ belongs to one of the $\sum_{i=0}^{p-1} {{|\mathrm{O}| + p - 1} \choose {i}}$ mutually exclusive sets $\cup_i ( \mathcal{O}_i\cap(\cap_{i_1} \mathcal{F}_{i_1}^\mathsf{c}))$, $\cup_{i}(\mathcal{O}_i\cap \mathcal{F}_{i_1}\cap(\cap_{i_2\neq i_1}\mathcal{F}_{i_2}^\mathsf{c}))$ for $i_1\in\mathcal{I}_\mathcal{F} $, etc. We have
\begin{align*}
\prod_{i = p + 1}^{|\mathrm{O}|+2p-1} f((y_i-\mathbf{x}_i^T\boldsymbol\beta)/\sigma) \prod_{i \in \mathcal{I}_\mathcal{O}} \frac{(1/\sigma)f((y_i-\mathbf{x}_i^T\boldsymbol\beta)/\sigma)}{\sigma^\gamma f(y_i)}  &\za{\leq} C^{|\mathrm{O}|+p-1} \frac{\prod_{i = p + 1}^{|\mathrm{O}|+p} (1/\sigma)f((y_i-\mathbf{x}_i^T\boldsymbol\beta)/\sigma)}{\prod_{i \in \mathcal{I}_\mathcal{O}}\sigma^\gamma f(y_i)} \cr
 &\zb{\leq} C^{|\mathrm{O}|+p-1}\prod_{i \in \mathcal{I}_\mathcal{O}} \frac{f(\omega / (2\kappa \sigma))}{\sigma^{\gamma + 1} f(2 |b_i| \omega)} \cr
 &\zc{\leq} C^{|\mathrm{O}|+p-1}\prod_{i \in \mathcal{I}_\mathcal{O}} (4 |b_i| \kappa)^{\gamma + 1}.
\end{align*}
In Step $a$, we use $f \leq C$ for all $i\in \mathcal{I}_\mathcal{O}$. We also use the fact that in any of the sets in which $\boldsymbol\beta$ can belong, there are at least $|\mathrm{O}|$ non-outlying points $(\mathbf{x}_i, y_i)$ such that $|\mathbf{x}_i^T\boldsymbol\beta| \geq \omega / \kappa$. Indeed, the case in which there are the least non-outliers such that $|\mathbf{x}_i^T\boldsymbol\beta| \geq \omega/\kappa$ corresponds to $\boldsymbol\beta\in\cup_i(\mathcal{O}_i\cap \mathcal{F}_{i_1}\cap \cdots\cap\mathcal{F}_{i_{p-1}}\cap (\cap_{i_p\neq i_1,\ldots,i_{p-1}}\mathcal{F}_{i_p}^\mathsf{c}))$. In this case there are $p - 1$ non-outliers such that $|\mathbf{x}_i^T\boldsymbol\beta|<\omega/\kappa$ (observations $i_1$ to $i_{p-1}$), which leaves $|\mathrm{O}|+p-1-(p-1)=|\mathrm{O}|$ non-outliers such that $|\mathbf{x}_i^T\boldsymbol\beta|\geq\omega/\kappa$ (i.e.\ that $\cap_{i_p\neq i_1,\ldots,i_{p-1}}\mathcal{F}_{i_p}^\mathsf{c}$ is an intersection of $|\mathrm{O}|$ sets). We consider without loss of generality that the non-outliers with $|\mathbf{x}_i^T\boldsymbol\beta| \geq \omega / \kappa$ are the observations with indices $p+1, \ldots, |\mathrm{O}|+p$, and we use $f \leq C$ for all $i\in \{|\mathrm{O}|+p + 1, \ldots, |\mathrm{O}|+2p - 1\}$ (if this set is non-empty). In Step $b$, we use the monotonicity of $f$ twice: 1) $|y_i - \mathbf{x}_i^T\boldsymbol\beta| \geq \left||\mathbf{x}_i^T\boldsymbol\beta| - |y_i|\right| \geq \omega / (2\kappa)$ given that for non-outliers $|y_i| = |a_i| \leq \omega / (2\kappa)$, and 2) for the outliers, $|y_i| \leq |a_i + b_i \omega| \leq |a_i| + |b_i| \omega \leq 2 |b_i| \omega$. In Step $c$, we use \autoref{lemma:bound_student} with $z = 2|b_i|\omega$ and $\nu = 4 |b_i| \kappa \sigma \geq 4 $. Note that the argument used here justifies the need of the assumption $|\mathrm{O}^\mathsf{c}| \geq |\mathrm{O}|+2p-1$.

\textbf{Area~2:} Consider $0<\sigma< 1$. We actually need to show that
\begin{align*}
  \lim_{\omega\rightarrow\infty} \int_{\re^p}\int_{0}^{1}
  \pi(\boldsymbol\beta, \sigma\mid \mathbf{y}_{\mathrm{O}^\mathsf{c}})  \prod_{i \in \mathrm{O} } \frac{(1/\sigma)f((y_i-\mathbf{x}_i^T\boldsymbol\beta)/\sigma)}{\sigma^\gamma f(y_i)} \, \d\sigma \, \d\boldsymbol\beta = \int_{\re^p}\int_{0}^{1}
  \pi(\boldsymbol\beta, \sigma\mid \mathbf{y}_{\mathrm{O}^\mathsf{c}}) \, \d\sigma \, \d\boldsymbol\beta.
\end{align*}
For Area~2, we proceed in a slightly different manner than for Area~1. We begin by dividing the first integral above into two parts as follows:
\begin{align*}
  &\lim_{\omega\rightarrow\infty}\int_{\re^p}\int_{0}^{1}
  \pi(\boldsymbol\beta, \sigma\mid \mathbf{y}_{\mathrm{O}^\mathsf{c}})  \prod_{i \in \mathrm{O} } \frac{(1/\sigma)f((y_i-\mathbf{x}_i^T\boldsymbol\beta)/\sigma)}{\sigma^\gamma f(y_i)} \, \d\sigma \, \d\boldsymbol\beta \\
  &\quad=\lim_{\omega\rightarrow\infty} \int_{\re^p}\int_{0}^{1}
  \pi(\boldsymbol\beta, \sigma\mid \mathbf{y}_{\mathrm{O}^\mathsf{c}})  \prod_{i \in \mathrm{O} } \frac{(1/\sigma)f((y_i-\mathbf{x}_i^T\boldsymbol\beta)/\sigma)}{\sigma^\gamma f(y_i)} \,
  \ind_{\cap_i \mathcal{O}_i^\mathsf{c}}(\boldsymbol\beta) \, \d\sigma \, \d\boldsymbol\beta \\
  &\qquad+\lim_{\omega\rightarrow\infty} \int_{\cup_i \mathcal{O}_i}\int_{0}^{1}
  \pi(\boldsymbol\beta, \sigma\mid \mathbf{y}_{\mathrm{O}^\mathsf{c}})  \prod_{i \in \mathrm{O} } \frac{(1/\sigma)f((y_i-\mathbf{x}_i^T\boldsymbol\beta)/\sigma)}{\sigma^\gamma f(y_i)} \, \d\sigma \, \d\boldsymbol\beta,
\end{align*}
where $\ind_{\cap_i \mathcal{O}_i^\mathsf{c}}(\boldsymbol\beta)$ is the indicator function that, in this case, takes the value 1 if $\boldsymbol\beta \in \cap_i \mathcal{O}_i^\mathsf{c}$, and $0$ otherwise, $\mathcal{O}_i$ being defined as before. We show that the limit of the first integral on the right-hand side (RHS) is equal to $\int_{\re^p}\int_{0}^{1}
  \pi(\boldsymbol\beta, \sigma\mid \mathbf{y}_{\mathrm{O}^\mathsf{c}}) \, \d\sigma \, \d\boldsymbol\beta$ and that the limit of the second integral is equal to 0.

  For the first part, we again use Lebesgue's dominated convergence theorem in order to interchange the limit $\omega\rightarrow\infty$ and the integral; assuming the theorem holds,
\begin{align*}
&\lim_{\omega\rightarrow\infty}\int_{\re^p}\int_{0}^{1}
  \pi(\boldsymbol\beta, \sigma\mid \mathbf{y}_{\mathrm{O}^\mathsf{c}})  \prod_{i \in \mathrm{O} } \frac{(1/\sigma)f((y_i-\mathbf{x}_i^T\boldsymbol\beta)/\sigma)}{\sigma^\gamma f(y_i)} \,
  \ind_{\cap_i \mathcal{O}_i^\mathsf{c}}(\boldsymbol\beta) \, \d\sigma \, \d\boldsymbol\beta\cr
 &\quad = \int_{\re^p}\int_{0}^{1}
  \pi(\boldsymbol\beta, \sigma\mid \mathbf{y}_{\mathrm{O}^\mathsf{c}}) \lim_{\omega \rightarrow \infty} \prod_{i \in \mathrm{O} } \frac{(1/\sigma)f((y_i-\mathbf{x}_i^T\boldsymbol\beta)/\sigma)}{\sigma^\gamma f(y_i)} \,
  \ind_{\cap_i \mathcal{O}_i^\mathsf{c}}(\boldsymbol\beta) \, \d\sigma \, \d\boldsymbol\beta\cr
 &\quad =\int_{\re^p}\int_{0}^{1}  \pi(\boldsymbol\beta, \sigma\mid \mathbf{y}_{\mathrm{O}^\mathsf{c}}) \times 1 \times \ind_{\re^p}(\boldsymbol\beta)
  \, \d\sigma \, \d\boldsymbol\beta = \int_{\re^p}\int_{0}^{1}
  \pi(\boldsymbol\beta, \sigma\mid \mathbf{y}_{\mathrm{O}^\mathsf{c}}) \, \d\sigma \, \d\boldsymbol\beta,
\end{align*}
using \eqref{eqn:limit_PDF}, and $\lim_{\omega\rightarrow\infty}$ $ \ind_{\cap_i\mathcal{O}_i^\mathsf{c}}(\boldsymbol\beta)=\ind_{\re^p}(\boldsymbol\beta)=1\Leftrightarrow \lim_{\omega\rightarrow\infty} \ind_{\cup_i\mathcal{O}_i}(\boldsymbol\beta)=0$. Indeed, if $i\in \mathcal{I}_\mathcal{O}$ and $b_i>0$ (which implies that $y_i>0$), $\boldsymbol\beta\in\mathcal{O}_i$ implies that $|y_i-\mathbf{x}_i^T\boldsymbol\beta|< \omega/2\leq y_i/2$, which in turn implies that  $y_i/2<\mathbf{x}_i^T\boldsymbol\beta<3y_i/2$, and in the limit (implying $y_i \rightarrow \infty$ in this case), no fixed $\boldsymbol\beta\in\re^p$ satisfies the inequality; we have the same conclusion if $b_i<0$. Note that pointwise convergence is sufficient, for any value of $\boldsymbol\beta\in\re^p$ and $1 >\sigma>0$, once the limit is inside the integral. We now demonstrate that the integrand is bounded above, for any value of $\omega\ge \yo$, by an integrable function of $\boldsymbol\beta$ and $\sigma$ that does not depend on $\omega$.

Consider that $\boldsymbol\beta\in \cap_i \mathcal{O}_i^\mathsf{c}$, that is $\{\boldsymbol\beta:|y_i-\mathbf{x}_i^T\boldsymbol\beta|\geq \omega/2 \text{ for all } i\in\mathcal{I}_{\mathcal{O}}\}$, and $0<\sigma< 1$. Note that the integrand is equal to 0 if $\boldsymbol\beta\notin \cap_i \mathcal{O}_i^\mathsf{c}$. We have
\begin{align*}
 \pi(\boldsymbol\beta, \sigma\mid \mathbf{y}_{\mathrm{O}^\mathsf{c}})  \prod_{i \in \mathrm{O} } \frac{(1/\sigma)f((y_i-\mathbf{x}_i^T\boldsymbol\beta)/\sigma)}{\sigma^\gamma f(y_i)} & \za{\leq}  \pi(\boldsymbol\beta, \sigma) \left[\prod_{i \in \text{O}^\mathsf{c}} (1 / \sigma) f((y_i - \mathbf{x}_i^T \boldsymbol\beta) / \sigma)\right]  \prod_{i \in \mathrm{O} } \frac{f(\omega / 2)}{f(y_i)} \cr
 &\zb{\leq}  \pi(\boldsymbol\beta, \sigma) \left[\prod_{i \in \text{O}^\mathsf{c}} (1 / \sigma) f((y_i - \mathbf{x}_i^T \boldsymbol\beta) / \sigma)\right]  \prod_{i \in \mathrm{O} } \frac{f(\omega / 2)}{f(2 |b_i| \omega)} \cr
 &\zc{\leq}  \pi(\boldsymbol\beta, \sigma) \left[\prod_{i \in \text{O}^\mathsf{c}} (1 / \sigma) f((y_i - \mathbf{x}_i^T \boldsymbol\beta) / \sigma)\right]  \prod_{i \in \mathrm{O} } (4 |b_i|)^{\gamma + 1},
\end{align*}
which is an integrable function. Indeed, it is proportional to the numerator of a posterior density with a prior given by $\pi(\, \cdot \,, \cdot \,)$ and based on $|\text{O}^\mathsf{c}|$ data points, which is integrable because $|\text{O}^\mathsf{c}| \geq |\mathrm{O}|\gamma + p + 2 > p + 1$, by \autoref{prop:proper}. In Step $a$, we use the monotonicity of the tails of the function $z \mapsto |z| f(z)$ and next the monotonicity of $f$, because $|y_i-\mathbf{x}_i^T\boldsymbol\beta| / \sigma\geq |y_i-\mathbf{x}_i^T\boldsymbol\beta| \geq \omega/2\geq\yo/2$, implying $(|y_i-\mathbf{x}_i^T\boldsymbol\beta|/\sigma)f((y_i-\mathbf{x}_i^T\boldsymbol\beta)/\sigma) \leq |y_i-\mathbf{x}_i^T\boldsymbol\beta|f(y_i-\mathbf{x}_i^T\boldsymbol\beta) \Leftrightarrow (1/\sigma)f((y_i-\mathbf{x}_i^T\boldsymbol\beta)/\sigma) \leq f(y_i-\mathbf{x}_i^T\boldsymbol\beta) \leq f(\omega / 2)$. In Step $b$, we use the monotonicity of $f$: $|y_i| \leq |a_i + b_i \omega| \leq |a_i| + |b_i| \omega \leq 2 |b_i| \omega$. In Step $c$, we use \autoref{lemma:bound_student} with $z = 2|b_i|\omega$ and $\nu = 4 |b_i| \geq 4$.

We now prove that
\begin{equation*}
\lim_{\omega\rightarrow\infty} \int_{\cup_i \mathcal{O}_i}\int_{0}^{1}
  \pi(\boldsymbol\beta, \sigma\mid \mathbf{y}_{\mathrm{O}^\mathsf{c}})  \prod_{i \in \mathrm{O} } \frac{(1/\sigma)f((y_i-\mathbf{x}_i^T\boldsymbol\beta)/\sigma)}{\sigma^\gamma f(y_i)} \, \d\sigma \, \d\boldsymbol\beta = 0.
  \end{equation*}
  We first bound above the integrand and then we prove that the integral of the upper bound converges towards 0 as $\omega\rightarrow\infty$. Using a similar strategy as when we bounded \eqref{eqn:fct_to_bound}, we split the domain of $\boldsymbol\beta$ as follows:
\begin{align*}
 &\cup_i \mathcal{O}_i=\left[\cup_i \left( \mathcal{O}_i\cap\left(\cap_{i_1} \mathcal{F}_{i_1}^\mathsf{c}\right)\right)\right]\cup \left[\cup_{i,i_1}\left(\mathcal{O}_i\cap \mathcal{F}_{i_1}\cap\left(\cap_{i_2\neq i_1}\mathcal{F}_{i_2}^\mathsf{c}\right)\right)\right] \cr
 &\cup\cdots\cup \left[\cup_{i,i_1,\ldots,i_{p-1} (i_j\neq i_s \, \forall i_j,i_s \text{ s.t. } j\neq s)}\left(\mathcal{O}_i\cap \mathcal{F}_{i_1}\cap \cdots\cap\mathcal{F}_{i_{p-1}}\cap \left(\cap_{i_p\neq i_1,\ldots,i_{p-1}}\mathcal{F}_{i_p}^\mathsf{c}\right)\right)\right] \cr
 &\qquad\qquad\cup \left[\cup_{i,i_1,\ldots,i_{p} (i_j\neq i_s \, \forall i_j,i_s \text{ s.t. } j\neq s)}\left(\mathcal{O}_i\cap \mathcal{F}_{i_1}\cap \cdots\cap\mathcal{F}_{i_{p}}\right)\right],
\end{align*}
$\mathcal{F}_{i}$ being defined as before, but we consider that $\mathcal{I}_\mathcal{F} = \{1,\ldots, |\text{O}| + 2p - 1\}$ (we assume as previously that the first $|\text{O}| + 2p - 1$ data points are non-outliers; recall that $|\mathrm{O}^\mathsf{c}| \geq |\mathrm{O}|+2p-1$). We can use the same argument as before to show that $\mathcal{O}_i\cap \mathcal{F}_{i_1}\cap \cdots\cap\mathcal{F}_{i_{p}}=\varnothing$ for all $i,i_1,\ldots,i_{p}$ with $i_j\neq i_s$, $\forall i_j\neq i_s$ such that $j\neq s$. Therefore,
 \begin{align*}
 &\cup_i \mathcal{O}_i=\left[\cup_i \left( \mathcal{O}_i\cap\left(\cap_{i_1} \mathcal{F}_{i_1}^\mathsf{c}\right)\right)\right]\cup \left[\cup_{i,i_1}\left(\mathcal{O}_i\cap \mathcal{F}_{i_1}\cap\left(\cap_{i_2\neq i_1}\mathcal{F}_{i_2}^\mathsf{c}\right)\right)\right] \cr
 &\cup\cdots\cup \left[\cup_{i,i_1,\ldots,i_{p-1} (i_j\neq i_s \, \forall i_j,i_s \text{ s.t. } j\neq s)}\left(\mathcal{O}_i\cap \mathcal{F}_{i_1}\cap \cdots\cap\mathcal{F}_{i_{p-1}}\cap \left(\cap_{i_p\neq i_1,\ldots,i_{p-1}}\mathcal{F}_{i_p}^\mathsf{c}\right)\right)\right].
\end{align*}

This decomposition of $\cup_i \mathcal{O}_i$ is comprised of $\sum_{i=0}^{p-1} {{|\text{O}|+2p-1} \choose {i}}$ mutually exclusive sets given by $\cup_i ( \mathcal{O}_i\cap(\cap_{i_1} \mathcal{F}_{i_1}^\mathsf{c}))$, $\cup_{i}(\mathcal{O}_i\cap \mathcal{F}_{i_1}\cap(\cap_{i_2\neq i_1}\mathcal{F}_{i_2}^\mathsf{c}))$ for $i_1\in\mathcal{I}_\mathcal{F} $, and so on. We now consider the area defined by: $0<\sigma< 1$ and  $\boldsymbol\beta$ belongs to one of these $\sum_{i=0}^{p-1} {{|\text{O}|+2p-1} \choose {i}}$ mutually exclusive sets. We have
\begin{align*}
 &\pi(\boldsymbol\beta, \sigma\mid \mathbf{y}_{\mathrm{O}^\mathsf{c}})  \prod_{i \in \mathrm{O} } \frac{(1/\sigma)f((y_i-\mathbf{x}_i^T\boldsymbol\beta)/\sigma)}{\sigma^\gamma f(y_i)} \cr
 &\propto \pi(\boldsymbol\beta, \sigma) \left[\prod_{i \in \text{O}^\mathsf{c}} (1 / \sigma) f((y_i - \mathbf{x}_i^T \boldsymbol\beta) / \sigma)\right] \prod_{i \in \mathrm{O} } \frac{(1/\sigma)f((y_i-\mathbf{x}_i^T\boldsymbol\beta)/\sigma)}{f(y_i)} \cr
 &\za{\leq} (C / \sigma) \left[\prod_{i \in \text{O}^\mathsf{c}} (1 / \sigma) f((y_i - \mathbf{x}_i^T \boldsymbol\beta) / \sigma)\right] \prod_{i \in \mathrm{O} } \frac{(1/\sigma)f((y_i-\mathbf{x}_i^T\boldsymbol\beta)/\sigma)}{f(y_i)} \cr
 &\zb{\leq} (C / \sigma)[(1/\sigma) f(\omega / (2\kappa\sigma))]  \prod_{i = 1 (i \neq i_p,\ldots,i_{|\text{O}|+p})}^n (1 / \sigma) f((y_i - \mathbf{x}_i^T \boldsymbol\beta) / \sigma) \prod_{i \in \mathrm{O} } \frac{(1/\sigma) f(\omega / (2\kappa\sigma))}{f(y_i)} \cr
 &\zc{\leq} (2 \kappa C^2 / \omega) (1 / \sigma)  \prod_{i = 1 (i \neq i_p,\ldots,i_{|\text{O}|+p})}^n (1 / \sigma) f((y_i - \mathbf{x}_i^T \boldsymbol\beta) / \sigma) \prod_{i \in \mathrm{O} } \frac{f(\omega / (2\kappa))}{f(y_i)} \cr
 &\zd{\leq} (2 \kappa C^2 / \omega) (1 / \sigma)  \prod_{i = 1 (i \neq i_p,\ldots,i_{|\text{O}|+p})}^n (1 / \sigma) f((y_i - \mathbf{x}_i^T \boldsymbol\beta) / \sigma) \prod_{i \in \mathrm{O} } \frac{f(\omega / (2\kappa))}{f(2 |b_i| \omega)} \cr
 &\ze{\leq} (2 \kappa C^2 / \omega) (1 / \sigma)  \prod_{i = 1 (i \neq i_p,\ldots,i_{|\text{O}|+p})}^n (1 / \sigma) f((y_i - \mathbf{x}_i^T \boldsymbol\beta) / \sigma) \prod_{i \in \mathrm{O} } (4 |b_i| \kappa)^{\gamma + 1} \cr
 &\propto (1 / \omega) (1 / \sigma)  \prod_{i = 1 (i \neq i_p,\ldots,i_{|\text{O}|+p})}^n (1 / \sigma) f((y_i - \mathbf{x}_i^T \boldsymbol\beta) / \sigma).
\end{align*}
In Step $a$, we use that $\pi(\boldsymbol\beta, \sigma) \le C\max(1,1/\sigma) = C/\sigma$. In Step $b$, we use that in any of the sets in which $\boldsymbol\beta$ can belong, there are at least $|\text{O}|+1$ non-outlying points such that $|\mathbf{x}_i^T\boldsymbol\beta|\geq\omega/\kappa$ (corresponding to $\boldsymbol\beta\in\mathcal{F}_{i}^\mathsf{c}$ for at least $|\text{O}|+1$ non-outlying points). Indeed, as explained before, the case in which there are the least non-outliers such that $|\mathbf{x}_i^T\boldsymbol\beta|\geq\omega/\kappa$ corresponds to $\boldsymbol\beta\in\cup_i(\mathcal{O}_i\cap \mathcal{F}_{i_1}\cap \cdots\cap\mathcal{F}_{i_{p-1}}\cap (\cap_{i_p\neq i_1,\ldots,i_{p-1}}\mathcal{F}_{i_p}^\mathsf{c}))$. In this case there are $p-1$ non-outliers such that $|\mathbf{x}_i^T\boldsymbol\beta|<\omega/\gamma$, which leaves at least $|\text{O}|+2p-1-(p-1)$ non-outliers such that $|\mathbf{x}_i^T\boldsymbol\beta|\geq\omega/\kappa$ (i.e.\ that $\cap_{i_p\neq i_1,\ldots,i_{p-1}}\mathcal{F}_{i_p}^\mathsf{c}$ is an intersection of $|\text{O}|+2p-1-(p-1)$ sets), and we know that $|\text{O}|+2p-1-(p-1)=|\text{O}|+p \geq |\text{O}| + 1$. This implies that there exists a set of $|\text{O}| + 1$ indices, that is considered to be without loss of generality $\{i_p,\ldots,i_{|\text{O}|+p}\}\subset\mathcal{I}_\mathcal{F}$, such that for all $i\in\{i_p,\ldots,i_{|\text{O}|+p}\}$,
\[
 f((y_i - \mathbf{x}_i^T \boldsymbol\beta)/\sigma)  \leq f(\omega / (2\kappa\sigma)),
\]
using the monotonicity of $f$: $|y_i - \mathbf{x}_i^T\boldsymbol\beta| \geq \left||\mathbf{x}_i^T\boldsymbol\beta| - |y_i|\right| \geq \omega / (2\kappa)$ given that for non-outliers $|y_i| = |a_i| \leq \omega / (2\kappa)$. In Step $c$, we use that $(1/\sigma) f(\omega / (2\kappa\sigma)) \leq (2 \kappa / \omega) C$, because the function $z \mapsto z f(z)$ is bounded above (we can thus choose $C$ to be large enough to bound that function), and the monotonicity of the tails of $z \mapsto |z| f(z)$ to obtain $(\omega / (2\kappa\sigma)) f(\omega / (2\kappa\sigma)) \leq (\omega / (2\kappa)) f(\omega / (2\kappa)) \Leftrightarrow (1 / \sigma) f(\omega / (2\kappa\sigma)) \leq f(\omega / (2\kappa))$ for $|\text{O}|$ terms, because $\omega / (2\kappa\sigma) \geq \omega / (2\kappa) \geq \yo / (2\kappa)$ (which is in the tails of the function for $\yo$ large enough).  In Step $d$, we use the monotonicity of $f$: $|y_i| \leq |a_i + b_i \omega| \leq |a_i| + |b_i| \omega \leq 2 |b_i| \omega$. In Step $e$, we use \autoref{lemma:bound_student} with $z = 2|b_i|\omega$ and $\nu = 4 |b_i| \kappa \geq 4$.

We have that
\begin{align*}
 & (1 / \omega) \int_{\cup_i \mathcal{O}_i}\int_{0}^{1} (1 / \sigma)  \prod_{i = 1 (i \neq i_p,\ldots,i_{|\text{O}|+p})}^n (1 / \sigma) f((y_i - \mathbf{x}_i^T \boldsymbol\beta) / \sigma) \, \d\sigma \, \d\boldsymbol\beta \cr
 &\leq (1 / \omega) \int_{\re^p} \int_{0}^{\infty} (1 / \sigma)  \prod_{i = 1 (i \neq i_p,\ldots,i_{|\text{O}|+p})}^n (1 / \sigma) f((y_i - \mathbf{x}_i^T \boldsymbol\beta) / \sigma) \, \d\sigma \, \d\boldsymbol\beta.
\end{align*}
In order to prove that the term on the RHS vanishes as $\omega\rightarrow\infty$, it suffices to prove that the integral is bounded by a constant that does not depend on $\omega$, because $1/\omega\rightarrow 0$. The integral corresponds to a marginal density of $n - (|\text{O}| + 1) = |\text{O}^\mathsf{c}| - 1$ data points, based on a prior distribution such that $\pi(\boldsymbol\beta, \sigma) = 1 / \sigma$. In the proof of Proposition 2.1 in \cite{gagnon2020}, it is shown that such a marginal density is bounded above by a constant that does not depend on $\omega$ if the number of data points is greater than or equal to $p + 1$, provided that the prior density, divided by $1 / \sigma$ is bounded, which is the case. We have that $|\text{O}^\mathsf{c}| - 1 \geq |\mathrm{O}|\gamma + p +1 \geq p + 1$. Therefore, the marginal density is bounded above by a constant that does not depend on $\omega$. This concludes the proof of Result (a).

We now turn to the proof of Result (b). We have that
\[
 \pi(\boldsymbol\beta, \sigma \mid \mathbf{y}) = \pi(\boldsymbol\beta,\sigma\mid\mathbf{y}_{\mathrm{O}^\mathsf{c}}) \, \frac{m(\mathbf{y}_{\mathrm{O}^\mathsf{c}}) \prod_{i \in \mathrm{O}}f(y_i)}{m(\mathbf{y})} \prod_{i \in \mathrm{O} } \frac{(1/\sigma)f((y_i-\mathbf{x}_i^T\boldsymbol\beta)/\sigma)}{\sigma^\gamma f(y_i)},
\]
and
\[
 \frac{m(\mathbf{y}_{\mathrm{O}^\mathsf{c}}) \prod_{i \in \mathrm{O}}f(y_i)}{m(\mathbf{y})} \prod_{i \in \mathrm{O} } \frac{(1/\sigma)f((y_i-\mathbf{x}_i^T\boldsymbol\beta)/\sigma)}{\sigma^\gamma f(y_i)} \rightarrow 1,
\]
as $\omega \rightarrow \infty$, for any $\boldsymbol\beta \in \re^p, \sigma > 0$, using Result (a) and \eqref{eqn:limit_PDF}. This concludes the proof of Result (b).

We finish with the proof of Result (c). This result is a direct consequence of Result (b) using Scheff\'{e}'s theorem (see \cite{scheffe1947useful}).
\end{proof}

\begin{proof}[Proof of \autoref{thm:2}]
Let us define the vector of unknown model parameters to be $\boldsymbol\theta := (\boldsymbol\beta, s) \in \re^{p + 1}$, where $\sigma$ will now be viewed as a function of $s$ defined through:
  \[
   \sigma = \begin{cases}
    (s + 1)^2 + 2s^2 \qquad \text{if $s \geq 0$,} \cr
    (1 - s)^{-2} \quad \text{if $s < 0$.}
   \end{cases}
  \]
This function is strictly increasing and thus defines a bijection. It is smooth, in the sense that its two first derivatives are continuous. These are technical requirements to prove \autoref{thm:2}.

We now present the proof of Result (a). We have that
\[
 \E[\log p_{0}(Y \mid \mathbf{X})] = - \log \sigma_0 - (2 \sigma_0^2)^{-1} \E[(Y - \mathbf{X}^T \boldsymbol\beta_0)^2] + \textsf{cst} = - \log \sigma_0 - 1 / 2 + \textsf{cst},
\]
where $\textsf{cst}$ will be used to denote a generic constant that does not depend on $\boldsymbol\beta, \sigma, \boldsymbol\beta, \sigma_0$. We omitted the index given that the random variables $\mathbf{Z}_1, \ldots, \mathbf{Z}_n$ are IID.

Also,
\begin{align*}
 \E[\log p_{\boldsymbol\theta}(Y \mid \mathbf{X})] &= \int \mu_\mathbf{X}(\d \mathbf{x}) \int \left(-\log \sigma - \frac{\gamma + 1}{2} \log\left(1 + \frac{(y - \mathbf{x}^T\boldsymbol\beta)^2}{\sigma^2 \gamma}\right)\right) \sigma_0^{-1} g(\sigma_0^{-1}(y - \mathbf{x}^T\boldsymbol\beta_0)) \, \d y + \textsf{cst} \cr
 &= \int \mu_\mathbf{X}(\d \mathbf{x}) \int \left(-\log \sigma - \frac{\gamma + 1}{2} \log\left(1 + \frac{(u - \mathbf{x}^T(\boldsymbol\beta - \boldsymbol\beta_0) / \sigma_0)^2}{(\sigma / \sigma_0)^2 \gamma}\right)\right) g(u) \, \d u + \textsf{cst} \cr
 &= -\eta - \log \sigma_0 - \frac{\gamma + 1}{2} \int \mu_\mathbf{X}(\d \mathbf{x}) \int \log\left(1 + \frac{(u - \mathbf{x}^T \boldsymbol\xi)^2}{\ee^{2 \eta}\gamma}\right) g(u) \, \d u + \textsf{cst} \cr
 &= -\eta - \log \sigma_0 - \frac{\gamma + 1}{2} \int \mu_\mathbf{X}(\d \mathbf{x}) \int \log\left(1 + \frac{(u - \mathbf{x}^T \boldsymbol\xi)^2}{\ee^{2\eta}\gamma}\right) g(u) \, \d u + \textsf{cst},
\end{align*}
using the change of variable $u = \sigma_0^{-1}(y - \mathbf{x}^T\boldsymbol\beta_0)$ and the reparametrization $(\boldsymbol\xi, \eta) := ((\boldsymbol\beta - \boldsymbol\beta_0) / \sigma_0, \log \sigma / \sigma_0)$ that is used specifically for the proof of Result (a). Therefore, the divergence in the new parametrization is given by
\begin{align*}
 K(\boldsymbol\xi, \eta) = \frac{\gamma + 1}{2} \int \mu_\mathbf{X}(\d \mathbf{x}) \int \log\left(1 + \frac{(u - \mathbf{x}^T \boldsymbol\xi)^2}{\ee^{2\eta}\gamma}\right) g(u) \, \d u + \eta + \textsf{cst}.
\end{align*}

We now prove that
\[
 \int \log\left(1 + \frac{(u - \mathbf{x}^T \boldsymbol\xi)^2}{\ee^{2\eta}\gamma}\right) g(u) \, \d u > \int \log\left(1 + \frac{u^2}{\ee^{2\eta}\gamma}\right) g(u) \, \d u,
\]
for any $\boldsymbol\xi \neq \mathbf{0}, \mathbf{x}, \eta$ and $\gamma$, which implies that the divergence is minimized at $\boldsymbol\xi = \mathbf{0}$ for any $\eta$ and $\gamma$. Recall that the explanatory variables are assumed to be continuous, which implies that $\mathbf{x}^T \boldsymbol\xi = 0$ with probability 0 when $\boldsymbol\xi \neq \mathbf{0}$. We proceed by proving that
\[
 \int \left(\log\left(1 + \frac{(u - \mathbf{x}^T \boldsymbol\xi)^2}{\ee^{2\eta}\gamma}\right) - \log\left(1 + \frac{u^2}{\ee^{2\eta}\gamma}\right)\right) g(u) \, \d u > 0.
\]

Let us consider that $\boldsymbol\xi \neq \mathbf{0}$ and $\mathbf{x}^T \boldsymbol\xi < 0$. The proof for the case $\mathbf{x}^T \boldsymbol\xi > 0$ is analogous. We have
\begin{align*}
 &\int \left(\log\left(1 + \frac{(u - \mathbf{x}^T \boldsymbol\xi)^2}{\ee^{2\eta}\gamma}\right) - \log\left(1 + \frac{u^2}{\ee^{2\eta}\gamma}\right)\right) g(u) \, \d u \cr
 &=\quad \int_{\mathbf{x}^T \boldsymbol\xi}^\infty \left(\log\left(1 + \frac{(u - \mathbf{x}^T \boldsymbol\xi)^2}{\ee^{2\eta}\gamma}\right) - \log\left(1 + \frac{u^2}{\ee^{2\eta}\gamma}\right)\right) g(u) \, \d u \cr
 &\qquad - \int_{-\infty}^{\mathbf{x}^T \boldsymbol\xi} \left(\log\left(1 + \frac{u^2}{\ee^{2\eta}\gamma}\right) - \log\left(1 + \frac{(u - \mathbf{x}^T \boldsymbol\xi)^2}{\ee^{2\eta}\gamma}\right)\right) g(u) \, \d u.
\end{align*}

We find an upper bound for the second integral on the RHS:
\begin{align*}
 &\int_{-\infty}^{\mathbf{x}^T \boldsymbol\xi} \left(\log\left(1 + \frac{u^2}{\ee^{2\eta}\gamma}\right) - \log\left(1 + \frac{(u - \mathbf{x}^T \boldsymbol\xi)^2}{\ee^{2\eta}\gamma}\right)\right) g(u) \, \d u \cr
 &\quad \za{=} \int_{-\mathbf{x}^T \boldsymbol\xi}^{\infty} \left(\log\left(1 + \frac{z^2}{\ee^{2\eta}\gamma}\right) - \log\left(1 + \frac{(-z - \mathbf{x}^T \boldsymbol\xi)^2}{\ee^{2\eta}\gamma}\right)\right) g(z) \, \d z \cr
 &\quad \zb{<} \int_{-\mathbf{x}^T \boldsymbol\xi}^{\infty} \left(\log\left(1 + \frac{z^2}{\ee^{2\eta}\gamma}\right) - \log\left(1 + \frac{(z + \mathbf{x}^T \boldsymbol\xi)^2}{\ee^{2\eta}\gamma}\right)\right) g(z + \mathbf{x}^T \boldsymbol\xi) \, \d z \cr
 &\quad \zc{=} \int_{0}^{\infty} \left(\log\left(1 + \frac{(w - \mathbf{x}^T \boldsymbol\xi)^2}{\ee^{2\eta}\gamma}\right) - \log\left(1 + \frac{w^2}{\ee^{2\eta}\gamma}\right)\right) g(w) \, \d w.
\end{align*}
In Step $a$, we use the change of variable $z = -u$. In Step $b$, we use that $z > z + \mathbf{x}^T \boldsymbol\xi \geq 0$, implying that $g(z) < g(z + \mathbf{x}^T \boldsymbol\xi)$. In Step $c$, we use the change of variable $w = z + \mathbf{x}^T \boldsymbol\xi \Leftrightarrow w - \mathbf{x}^T \boldsymbol\xi= z$.

Therefore,
\begin{align*}
 &\int \left(\log\left(1 + \frac{(u - \mathbf{x}^T \boldsymbol\xi)^2}{\ee^{2\eta}\gamma}\right) - \log\left(1 + \frac{u^2}{\ee^{2\eta}\gamma}\right)\right) g(u) \, \d u \cr
 &\quad > \quad \int_{\mathbf{x}^T \boldsymbol\xi}^\infty \left(\log\left(1 + \frac{(u - \mathbf{x}^T \boldsymbol\xi)^2}{\ee^{2\eta}\gamma}\right) - \log\left(1 + \frac{u^2}{\ee^{2\eta}\gamma}\right)\right) g(u) \, \d u \cr
 &\qquad - \int_{0}^{\infty} \left(\log\left(1 + \frac{(w - \mathbf{x}^T \boldsymbol\xi)^2}{\ee^{2\eta}\gamma}\right) - \log\left(1 + \frac{w^2}{\ee^{2\eta}\gamma}\right)\right) g(w) \, \d w \cr
 &\quad = \int_{\mathbf{x}^T \boldsymbol\xi}^0 \left(\log\left(1 + \frac{(u - \mathbf{x}^T \boldsymbol\xi)^2}{\ee^{2\eta}\gamma}\right) - \log\left(1 + \frac{u^2}{\ee^{2\eta}\gamma}\right)\right) g(u) \, \d u.
\end{align*}

We proceed similarly as before to show that this integral is greater than $0$:
\begin{align*}
 &\int_{\mathbf{x}^T \boldsymbol\xi}^0 \left(\log\left(1 + \frac{(u - \mathbf{x}^T \boldsymbol\xi)^2}{\ee^{2\eta}\gamma}\right) - \log\left(1 + \frac{u^2}{\ee^{2\eta}\gamma}\right)\right) g(u) \, \d u \cr
 &\quad = \int_{\mathbf{x}^T \boldsymbol\xi / 2}^{-\mathbf{x}^T \boldsymbol\xi / 2} \left(\log\left(1 + \frac{(t - \mathbf{x}^T \boldsymbol\xi / 2)^2}{\ee^{2\eta}\gamma}\right) - \log\left(1 + \frac{(t + \mathbf{x}^T \boldsymbol\xi / 2)^2}{\ee^{2\eta}\gamma}\right)\right) g(t + \mathbf{x}^T \boldsymbol\xi / 2) \, \d t \cr
 &\quad = \int_{0}^{-\mathbf{x}^T \boldsymbol\xi / 2} \left(\log\left(1 + \frac{(t - \mathbf{x}^T \boldsymbol\xi / 2)^2}{\ee^{2\eta}\gamma}\right) - \log\left(1 + \frac{(t + \mathbf{x}^T \boldsymbol\xi / 2)^2}{\ee^{2\eta}\gamma}\right)\right) g(t + \mathbf{x}^T \boldsymbol\xi / 2) \, \d t \cr
 &\qquad - \int_{\mathbf{x}^T \boldsymbol\xi / 2}^{0} \left(\log\left(1 + \frac{(t + \mathbf{x}^T \boldsymbol\xi / 2)^2}{\ee^{2\eta}\gamma}\right) - \log\left(1 + \frac{(t - \mathbf{x}^T \boldsymbol\xi / 2)^2}{\ee^{2\eta}\gamma}\right)\right) g(t + \mathbf{x}^T \boldsymbol\xi / 2) \, \d t,
\end{align*}
using the change of variable $t = u - \mathbf{x}^T \boldsymbol\xi / 2$. We find an upper bound for the second integral on the RHS:
\begin{align*}
 &\int_{\mathbf{x}^T \boldsymbol\xi / 2}^{0} \left(\log\left(1 + \frac{(t + \mathbf{x}^T \boldsymbol\xi / 2)^2}{\ee^{2\eta}\gamma}\right) - \log\left(1 + \frac{(t - \mathbf{x}^T \boldsymbol\xi / 2)^2}{\ee^{2\eta}\gamma}\right)\right) g(t + \mathbf{x}^T \boldsymbol\xi / 2) \, \d t \cr
 &\quad \za{=} \int_{0}^{-\mathbf{x}^T \boldsymbol\xi / 2} \left(\log\left(1 + \frac{(z - \mathbf{x}^T \boldsymbol\xi / 2)^2}{\ee^{2\eta}\gamma}\right) - \log\left(1 + \frac{(z + \mathbf{x}^T \boldsymbol\xi / 2)^2}{\ee^{2\eta}\gamma}\right)\right) g(z - \mathbf{x}^T \boldsymbol\xi / 2) \, \d z \cr
 &\quad \zb{<} \int_{0}^{-\mathbf{x}^T \boldsymbol\xi / 2} \left(\log\left(1 + \frac{(z - \mathbf{x}^T \boldsymbol\xi / 2)^2}{\ee^{2\eta}\gamma}\right) - \log\left(1 + \frac{(z + \mathbf{x}^T \boldsymbol\xi / 2)^2}{\ee^{2\eta}\gamma}\right)\right) g(z + \mathbf{x}^T \boldsymbol\xi / 2) \, \d z.
\end{align*}
In Step $a$, we use the change of variable $z = -t$. In Step $b$, we use that $(z - \mathbf{x}^T \boldsymbol\xi / 2)^2 > (z + \mathbf{x}^T \boldsymbol\xi / 2)^2$ when $-\mathbf{x}^T \boldsymbol\xi / 2 \geq z > 0$, implying that $g(z - \mathbf{x}^T \boldsymbol\xi / 2) < g(z + \mathbf{x}^T \boldsymbol\xi / 2)$. Indeed,
\[
 (z - \mathbf{x}^T \boldsymbol\xi / 2)^2 = (z + \mathbf{x}^T \boldsymbol\xi / 2 - \mathbf{x}^T \boldsymbol\xi)^2 = (z + \mathbf{x}^T \boldsymbol\xi / 2)^2 - 2 \mathbf{x}^T \boldsymbol\xi z > (z + \mathbf{x}^T \boldsymbol\xi / 2)^2,
\]
because $(\mathbf{x}^T \boldsymbol\xi)^2 \geq - 2 \mathbf{x}^T \boldsymbol\xi z > 0$. This concludes the proof that the divergence is minimized at $\boldsymbol\xi = \mathbf{0}$ for any $\eta$ and $\gamma$.

To minimize the divergence with respect to $\eta$, we thus set $\boldsymbol\xi = \mathbf{0}$ and proceed, meaning that we minimize the following with respect to $\eta$:
\begin{align*}
 K(\mathbf{0}, \eta) &= \frac{\gamma + 1}{2} \int \mu_\mathbf{X}(\d \mathbf{x}) \int \log\left(1 + \frac{u}{\ee^{2\eta}\gamma}\right) g(u) \, \d u + \eta + \textsf{cst} \\
 &=  \frac{\gamma + 1}{2} \int \log\left(1 + \frac{u^2}{\ee^{2\eta}\gamma}\right) g(u) \, \d u + \eta + \textsf{cst}.
\end{align*}

We proceed by differentiating the function with respect to $\eta$:
\[
 \frac{\partial}{\partial \eta} K(\mathbf{0}, \eta) = \frac{\gamma + 1}{2} \frac{\partial}{\partial \eta} \int \log\left(1 + \frac{u^2}{\ee^{2\eta}\gamma}\right) g(u) \, \d u + 1.
\]
We show that we can differentiate under the under the integral (see, e.g., \cite{rosenthal2006first}). First, we need to show the integral is finite for any $\eta$ and $\gamma$. Using Jensen's inequality,
\[
 \int \log\left(1 + \frac{u^2}{\ee^{2\eta}\gamma}\right) g(u) \, \d u \leq \log \int \left(1 + \frac{u^2}{\ee^{2\eta}\gamma}\right) g(u) \, \d u = \log\left(1 + \frac{1}{\ee^{2\eta}\gamma}\right) < \infty.
\]
Second, we show that the derivative of the integrand is bounded, which is sufficient and allows to conclude:
\[
  \left|\frac{\partial}{\partial \eta} \log\left(1 + \frac{u^2}{\ee^{2\eta}\gamma}\right)\right| = \left|\frac{-2 u^2}{\ee^{2\eta}\gamma + u^2}\right| \leq 2,
\]
given that $\ee^{2\eta}\gamma \geq 0$. Therefore,
\begin{align}\label{eqn:der_eta}
 \frac{\partial}{\partial \eta} K(\mathbf{0}, \eta) = \frac{\gamma + 1}{2} \int \frac{-2 u^2}{\ee^{2\eta}\gamma + u^2} \, g(u) \, \d u + 1.
\end{align}

We now show that $K(\mathbf{0}, \eta)$ is minimized at $\eta^*$, which is the solution to
\begin{align}\label{eqn:Thm_2_a_alternative}
 (\gamma + 1) \int \frac{u^2}{\ee^{2\eta}\gamma + u^2} \, g(u) \, \d u - 1 = 0.
\end{align}
For any $\gamma$, the function on the RHS in \eqref{eqn:der_eta} is a continuous and strictly increasing function of $\eta$ that has a minimum of $-\gamma$ and a maximum of $1$, implying that the solution exists and is unique. This also implies that the function $K(\mathbf{0}, \eta)$ is strictly decreasing for $\eta < \eta^*$ and then strictly increasing for $\eta > \eta^*$, which concludes the proof. This function is for instance minimized at $\eta^* = -0.4910$ when $\gamma = 1$, which implies that $\sigma^* / \sigma_0 = 0.6120$ (see \autoref{fig:fct_eta}).

The integral in \eqref{eqn:Thm_2_a_alternative} is equal to
\[
 1 - \sqrt{2 \pi \ee^{2\eta} \gamma} \, \exp\left(\ee^{2\eta} \gamma / 2\right) \left(1 - \Phi\left(\sqrt{\ee^{2\eta} \gamma}\right)\right),
\]
which explains why \eqref{eqn:Thm_2_a_alternative} is equivalent to \eqref{eqn:Thm_2_a}.

   \begin{figure}[ht]
 \centering
   \includegraphics[width=0.50\textwidth]{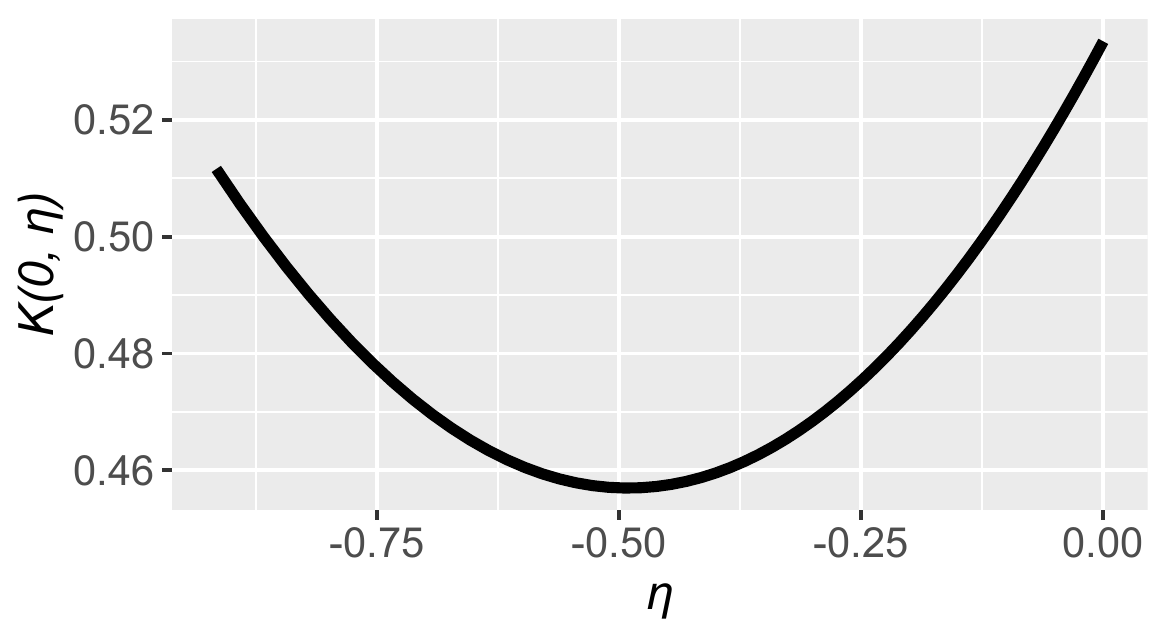}
  \vspace{-3mm}
\caption{\small Function $K(\mathbf{0}, \eta)$ when $\gamma = 1$}\label{fig:fct_eta}
\end{figure}
\normalsize

We turn to the proof of Result (b). To prove this result, we verify the assumptions of Theorems 2.1 and 2.2 in \cite{bunke1998asymptotic}.

\textbf{A1}: \textit{the parameter space $\boldsymbol\Theta = \re^{p + 1}$ is a closed (possibly unbounded) convex set in $\re^{p + 1}$ with a non-empty interior, the density $p_{\boldsymbol\theta}( \, \cdot \mid \cdot \,) f_\mathbf{X}(\, \cdot \,)$ is bounded for all $\boldsymbol\theta$ and $(y, \mathbf{x})$, and its carrier $\{(y, \mathbf{x}): p_{\boldsymbol\theta}(y \mid \mathbf{x}) f_\mathbf{X}(\mathbf{x}) > 0\}$ is the same for all $\boldsymbol\theta$.}

This assumption is seen to be verified.

\textbf{A2}: \textit{for all $\boldsymbol\theta$, there is a sphere $S[\boldsymbol\theta, r]$ of center $\boldsymbol\theta$ and radius $r > 0$ which may depend on $\boldsymbol\theta$ with}
	 \[
	  \E\left[\sup\left\{\left|\log\frac{p_{0}(Y \mid \mathbf{X}) \, f_\mathbf{X}(\mathbf{X})}{p_{(\mathbf{t}, w)}(Y \mid \mathbf{X}) \, f_\mathbf{X}(\mathbf{X})}\right|: (\mathbf{t}, w) \in S[\boldsymbol\theta, r]\right\}\right] = \E\left[\sup\left\{\left|\log\frac{p_{0}(Y \mid \mathbf{X})}{p_{(\mathbf{t}, w)}(Y \mid \mathbf{X})}\right|: (\mathbf{t}, w) \in S[\boldsymbol\theta, r]\right\}\right] < \infty.
	 \]

 For fixed $(y, \mathbf{x})$, we find an upper bound for
 \[
  \left|\log\frac{p_{0}(y \mid \mathbf{x})}{p_{(\mathbf{t}, w)}(y \mid \mathbf{x})}\right|
 \]
 that does not depend on $(\mathbf{t}, w)$ and that is valid for any $(y, \mathbf{x})$ using that $(\mathbf{t}, w) \in S[\boldsymbol\theta, r]$. Using the triangle inequality,
\[
 \left|\log\frac{p_{0}(y \mid \mathbf{x})}{p_{(\mathbf{t}, w)}(y \mid \mathbf{x})}\right| \leq |\log p_{0}(y \mid \mathbf{x})| + |\log p_{(\mathbf{t}, w)}(y \mid \mathbf{x})|.
\]
The first term on the RHS does not depend on $(\mathbf{t}, w)$; we can thus focus on the second term.

Using the triangle inequality again and writing the scale parameter as $\sigma$ to simplify (which here is a function of $w$),
\begin{align*}
 |\log p_{(\mathbf{t}, w)}(y \mid \mathbf{x})| &= \left|-\log \sigma - \frac{\gamma + 1}{2}\log\left(1 + \frac{(y - \mathbf{x}^T \mathbf{t})^2}{\sigma^2 \gamma}\right)\right| \cr
 &\leq \log \sigma + \frac{\gamma + 1}{2}\log\left(1 + \frac{(y - \mathbf{x}^T \mathbf{t})^2}{\sigma^2 \gamma}\right) \cr
 &\leq \log \overline{\sigma} + \frac{\gamma + 1}{2}\log\left(1 + \frac{(y - \mathbf{x}^T \mathbf{t})^2}{\underline{\sigma}^2 \gamma}\right),
\end{align*}
where $\overline{\sigma}$ and $\underline{\sigma}$ are the minimum and maximum of $\sigma$ (viewed as a function of $w$) in $S[\boldsymbol\theta, r]$, respectively. We have the following upper bound on $(y - \mathbf{x}^T \mathbf{t})^2$ using that $(a + b)^2 \leq 2a^2 + 2b^2$ for any $a,b \in \re$,
\begin{align*}
 (y - \mathbf{x}^T \mathbf{t})^2 = (y - \mathbf{x}^T \boldsymbol\beta + \mathbf{x}^T (\boldsymbol\beta - \mathbf{t}))^2 &\leq 2 (y - \mathbf{x}^T \boldsymbol\beta)^2 + 2( \mathbf{x}^T (\boldsymbol\beta - \mathbf{t}))^2 \cr
 &\leq 4y^2 + 4(\mathbf{x}^T \boldsymbol\beta)^2 + 2( \mathbf{x}^T (\boldsymbol\beta - \mathbf{t}))^2,
\end{align*}
and $\mathbf{x}^T (\boldsymbol\beta - \mathbf{t}) \leq \|\mathbf{x}\| \, \|\boldsymbol\beta - \mathbf{t}\| \leq \|\mathbf{x}\| r$, where $\| \, \cdot \,\|$ is the Euclidean norm.

Therefore, for any $(y, \mathbf{x})$
\[
 \left|\log\frac{p_{0}(y \mid \mathbf{x})}{p_{(\mathbf{t}, w)}(y \mid \mathbf{x})}\right| \leq  |\log p_{0}(y \mid \mathbf{x})| + \log\overline{\sigma} + \frac{\gamma + 1}{2}\log\left(1 + \frac{4 y^2 + (4\|\boldsymbol\beta\|^2 + 2 r^2)\|\mathbf{x}\|^2}{\underline{\sigma}^2 \gamma}\right),
\]
implying that
\begin{align*}
 &\E\left[\sup\left\{\left|\log\frac{p_{0}(Y \mid \mathbf{X})}{p_{(\mathbf{t}, w)}(Y \mid \mathbf{X})}\right|: (\mathbf{t}, w) \in S[\boldsymbol\theta, r]\right\}\right] \cr
  &\quad\leq \E\left[|\log p_{0}(Y \mid \mathbf{X})| + \log\overline{\sigma} + \frac{\gamma + 1}{2}\log\left(1 + \frac{4 Y^2 + (4\|\boldsymbol\beta\|^2 + 2 r^2)\|\mathbf{X}\|^2}{\underline{\sigma}^2 \gamma}\right)\right].
\end{align*}

Now we prove that the expectation on the RHS is finite. Using the triangle inequality,
\[
 \E[|\log p_{0}(Y \mid \mathbf{X})|] \leq \log \sigma_0 + (2\sigma_0^2)^{-1} \E[(Y - \mathbf{X}^T \boldsymbol\beta_0)^2] + \textsf{cst} = \log \sigma_0 + 1 / 2 + \textsf{cst}.
\]

Using Jensen's inequality, that $Y = \mathbf{X}^T \boldsymbol\beta_0 + \sigma_0 \varepsilon$ with $\varepsilon \sim g$, that $(a + b)^2 \leq 2a^2 + 2b^2$ for any $a,b \in \re$, and the Cauchy--Schwarz inequality,
\begin{align*}
 \E\left[\log\left(1 + \frac{4 Y^2 + (4\|\boldsymbol\beta\|^2 + 2 r^2)\|\mathbf{X}\|^2}{\underline{\sigma}^2 \gamma}\right)\right] &\leq \log\left(1 + \frac{\E[4 Y^2 + (4\|\boldsymbol\beta\|^2 + 2 r^2)\|\mathbf{X}\|^2]}{\underline{\sigma}^2 \gamma}\right) \cr
 &=  \log\left(1 + \frac{\E[4(\mathbf{X}^T \boldsymbol\beta_0 + \sigma_0 \epsilon)^2 + (4\|\boldsymbol\beta\|^2 + 2 r^2)\|\mathbf{X}\|^2]}{\underline{\sigma}^2 \gamma}\right) \cr
 &\leq \log\left(1 + \frac{\E[8(\mathbf{X}^T \boldsymbol\beta_0)^2 + 8(\sigma_0 \epsilon)^2 + (4\|\boldsymbol\beta\|^2 + 2 r^2)\|\mathbf{X}\|^2]}{\underline{\sigma}^2 \gamma}\right) \cr
 &\leq \log\left(1 + \frac{8\sigma_0^2 + (8\|\boldsymbol\beta_0\|^2 + 4\|\boldsymbol\beta\|^2 + 2 r^2) \E[\|\mathbf{X}\|^2]}{\underline{\sigma}^2 \gamma}\right),
\end{align*}
which is finite given that $\E[\|\mathbf{X}\|^2] < \infty$.

\textbf{A3}: \textit{for all fixed $(y, \mathbf{x})$, the density $p_{\boldsymbol\theta}(y \mid \mathbf{x}) f_\mathbf{X}(\mathbf{x})$ has a continuous derivative $p_{\boldsymbol\theta}'(y \mid \mathbf{x}) f_\mathbf{X}(\mathbf{x})$ with respect to $\boldsymbol\theta$ and there are positive constants $c, b_0$ such that
	 \[
	  \int \mu_\mathbf{X}(\d \mathbf{x}) \int \| [p_{\boldsymbol\theta}(y \mid \mathbf{x}) ]^{-1} p_{\boldsymbol\theta}'(y \mid \mathbf{x})  \|^{4(p+2)} \, p_{\boldsymbol\theta}(y \mid \mathbf{x}) \, \d y < c\, (1+\|\boldsymbol\theta\|^{b_0}),
	 \]
	 for all $\boldsymbol\theta$, where $\|\cdot\|$ denotes a norm in $\re^{p + 1}$.}

  We have that
  \[
   \sigma = \begin{cases}
    (s + 1)^2 + 2s^2 \qquad \text{if $s \geq 0$,} \cr
    (1 - s)^{-2} \quad \text{if $s < 0$.}
   \end{cases}
  \]
  Therefore,
  \[
   \frac{\partial \sigma}{\partial s} = \begin{cases}
    2(s + 1) + 4s \qquad \text{if $s \geq 0$,} \cr
    2(1 - s)^{-3} \quad \text{if $s < 0$.}
   \end{cases}
  \]
  We use $\sigma_s$ to denote the latter derivative, viewed as a function of $s$.

  In our case,
  \[
   [p_{\boldsymbol\theta}(y \mid \mathbf{x}) ]^{-1} p_{\boldsymbol\theta}'(y \mid \mathbf{x}) = \left(\begin{array}{c}
    \sigma^{-1} \frac{\gamma + 1}{\gamma}\left(1 + \frac{(y - \mathbf{x}^T \boldsymbol\beta)^2}{\sigma^2 \gamma}\right)^{-1} \frac{y - \mathbf{x}^T \boldsymbol\beta}{\sigma}  \, \mathbf{x} \cr
    - \sigma^{-1} \sigma_s + \sigma^{-1} \sigma_s \frac{\gamma + 1}{\gamma} \left(1 + \frac{(y - \mathbf{x}^T \boldsymbol\beta)^2}{\sigma^2 \gamma}\right)^{-1} \frac{(y - \mathbf{x}^T \boldsymbol\beta)^2}{\sigma^2}
    \end{array}
   \right).
  \]

  Taking the Euclidean norm, we have
  \begin{align*}
   \|[p_{\boldsymbol\theta}(y \mid \mathbf{x}) ]^{-1} p_{\boldsymbol\theta}'(y \mid \mathbf{x})\|^2 &\leq \sigma^{-2} \left(\left(\frac{\gamma + 1}{\gamma}\right)^2 \|\mathbf{x}\|^2 + \sigma_s^2\left(1 + \left(\frac{\gamma + 1}{\gamma}\right)\right)^2\right) \cr
   &\leq 36 (|s| + 1)^4 \left(\left(\frac{\gamma + 1}{\gamma}\right)^2 \|\mathbf{x}\|^2 + \left(1 + \left(\frac{\gamma + 1}{\gamma}\right)\right)^2\right),
  \end{align*}
  using that
  \[
   0 \leq \left(1 + \frac{(y - \mathbf{x}^T \boldsymbol\beta)^2}{\sigma^2 \gamma}\right)^{-1} \left|\frac{y - \mathbf{x}^T \boldsymbol\beta}{\sigma}\right| \leq 1,
  \]
  \[
   0 \leq \left(1 + \frac{(y - \mathbf{x}^T \boldsymbol\beta)^2}{\sigma^2 \gamma}\right)^{-1} \frac{(y - \mathbf{x}^T \boldsymbol\beta)^2}{\sigma^2} \leq 1,
  \]
  and
  \[
   1, \sigma_s \leq 6(|s| + 1) \quad \text{and} \quad \sigma^{-1} \leq (|s| + 1)^2.
  \]
  Therefore,
  \begin{align*}
   &\int \mu_\mathbf{X}(\d \mathbf{x}) \int \| [p_{\boldsymbol\theta}(y \mid \mathbf{x}) ]^{-1} p_{\boldsymbol\theta}'(y \mid \mathbf{x})  \|^{4(p+2)} \, p_{\boldsymbol\theta}(y \mid \mathbf{x}) \, \d y \cr
   &\qquad \leq 6^{4(p + 2)}(|s| + 1)^{8(p+2)} \int \mu_\mathbf{X}(\d \mathbf{x}) \left(\left(\frac{\gamma + 1}{\gamma}\right)^2 \|\mathbf{x}\|^2 + \left(1 + \left(\frac{\gamma + 1}{\gamma}\right)\right)^2\right)^{2(p + 2)}.
  \end{align*}
  The integral is finite given that $\E\|\mathbf{X}\|^{4(p + 2)} < \infty$. Also, there exists a positive constant $c$ such that $(|s| + 1)^{8(p+2)}$ is upper bounded by $c(1 + \|s\|^{8(p+2)})$, which allows to conclude that A3 is verified.

\textbf{A4}: \textit{for some positive constant $b_1$, the affinity has the following behaviour:
\[
 \int \mu_\mathbf{X}(\d \mathbf{x}) \int \left[p_{\boldsymbol\theta}(y \mid \mathbf{x})p_{0}(y \mid \mathbf{x})\right]^{1/2} \d y < c \|\boldsymbol\theta\|^{-b_1}, \quad \boldsymbol\theta \in \re^{p + 1}.
\]
}

First, we show that the affinity has this behaviour when the parameters are considered to be in a compact set such that $|s|, |\beta_1|, \ldots, |\beta_p| \leq k$, where $k$ is a positive constant. Using the Cauchy--Schwarz inequality,
\[
 \int \mu_\mathbf{X}(\d \mathbf{x}) \int \left[p_{\boldsymbol\theta}(y \mid \mathbf{x})p_{0}(y \mid \mathbf{x})\right]^{1/2} \d y \leq 1.
\]
Therefore, if $\|\boldsymbol\theta\| \leq 1$, then
\[
 \int \mu_\mathbf{X}(\d \mathbf{x}) \int \left[p_{\boldsymbol\theta}(y \mid \mathbf{x})p_{0}(y \mid \mathbf{x})\right]^{1/2} \d y \leq \|\boldsymbol\theta\|^{-b_1}
\]
for any $b_1$. If $\|\boldsymbol\theta\| > 1$, but $\boldsymbol\theta$ belongs to the compact set,
\[
 \int \mu_\mathbf{X}(\d \mathbf{x}) \int \left[p_{\boldsymbol\theta}(y \mid \mathbf{x})p_{0}(y \mid \mathbf{x})\right]^{1/2} \d y \leq \frac{\|\boldsymbol\theta\|^{b_1}}{\|\boldsymbol\theta\|^{b_1}} \leq \frac{(p + 1)^{b_1/2} k^{b_1}}{\|\boldsymbol\theta\|^{b_1}}
\]
for any $b_1$.

We can thus focus on the case where the parameters are outside of the compact set and are such that $|s|, |\beta_1|, \ldots, |\beta_p| > k$. We have that
\begin{align*}
   \int \left[p_{\boldsymbol\theta}(y \mid \mathbf{x})p_{0}(y \mid \mathbf{x})\right]^{1/2} \d y &= \int \frac{1}{\sigma^{1/2}} f^{1/2}\left(\frac{y - \mathbf{x}^T\boldsymbol\beta}{\sigma}\right) \, \frac{1}{\sigma_0^{1/2}} g^{1/2}\left(\frac{y - \mathbf{x}^T\boldsymbol\beta_0}{\sigma_0}\right) \, \d y \cr
   &\za{=} \sigma_0^{1/2} \int \frac{1}{\sigma^{1/2}} f^{1/2}\left(\frac{u - \mathbf{x}^T(\boldsymbol\beta - \boldsymbol\beta_0) / \sigma_0}{\sigma / \sigma_0}\right) \, g^{1/2}\left(u \right) \, \d u \cr
   &\zb{\leq} \sigma_0\max\{1, \sigma_0^{-\frac{\gamma + 1}{2}}\} \int \frac{1}{\sigma^{1/2}} f^{1/2}\left(\frac{u - \mathbf{x}^T(\boldsymbol\beta - \boldsymbol\beta_0) / \sigma_0}{\sigma}\right) \, g^{1/2}(u) \, \d u \cr
   &\propto \int \frac{1}{\sigma^{1/2}} \left(1 + \frac{(u - \mathbf{x}^T(\boldsymbol\beta - \boldsymbol\beta_0) / \sigma_0)^2}{\sigma^2 \gamma}\right)^{-\frac{\gamma + 1}{4}} \, g^{1/2}(u) \, \d u \cr
   &\zc{\leq} \gamma^{\frac{\gamma + 1}{4}} \int \frac{1}{\sigma^{1/2}} \left(1 + \frac{(u - \mathbf{x}^T(\boldsymbol\beta - \boldsymbol\beta_0) / \sigma_0)^2}{\sigma^2}\right)^{-\frac{1}{2}} \, g^{1/2}(u) \, \d u \cr
   &\zd{<} \gamma^{\frac{\gamma + 1}{4}} \sigma_0^{1/32} |\mathbf{x}^T(\boldsymbol\beta - \boldsymbol\beta_0)|^{-1/32} \, \textsf{cst} \times \begin{cases}
    (|s| + 1)^{-1/16} \quad \text{if $s \geq 0$,} \cr
    (|s| + 1)^{-1/2} \quad \text{if $s < 0$.}
    \end{cases} \cr
    &\leq \gamma^{\frac{\gamma + 1}{4}} \sigma_0^{1/32} |\mathbf{x}^T(\boldsymbol\beta - \boldsymbol\beta_0)|^{-1/32} \, \textsf{cst} \, (|s| + 1)^{-1/2}.
\end{align*}
In Step $a$, we used the change of variable $u = \sigma_0^{-1}(y - \mathbf{x}^T\boldsymbol\beta_0)$. In Step $b$, we used that
\[
 f^{1/2}\left(\frac{u - \mathbf{x}^T(\boldsymbol\beta - \boldsymbol\beta_0) / \sigma_0}{\sigma / \sigma_0}\right) \leq f^{1/2}\left(\frac{u - \mathbf{x}^T(\boldsymbol\beta - \boldsymbol\beta_0) / \sigma_0}{\sigma}\right) \, \max\{1, \sigma_0^{-\frac{\gamma + 1}{2}}\}.
\]
Indeed, if $\sigma_0 > 1$, $(\sigma / \sigma_0)^{-1}(u - \mathbf{x}^T(\boldsymbol\beta - \boldsymbol\beta_0) / \sigma_0) > \sigma^{-1}(u - \mathbf{x}^T(\boldsymbol\beta - \boldsymbol\beta_0) / \sigma_0)$, then by monotonicity of $f$,
\[
 f^{1/2}\left(\frac{u - \mathbf{x}^T(\boldsymbol\beta - \boldsymbol\beta_0) / \sigma_0}{\sigma / \sigma_0}\right) \leq f^{1/2}\left(\frac{u - \mathbf{x}^T(\boldsymbol\beta - \boldsymbol\beta_0) / \sigma_0}{\sigma}\right) =  f^{1/2}\left(\frac{u - \mathbf{x}^T(\boldsymbol\beta - \boldsymbol\beta_0) / \sigma_0}{\sigma}\right) \, \max\{1, \sigma_0^{-\frac{\gamma + 1}{2}}\}.
\]
If $\sigma_0 \leq 1$, we apply \autoref{lemma:bound_student} to obtain
\begin{align*}
 f^{1/2}\left(\frac{u - \mathbf{x}^T(\boldsymbol\beta - \boldsymbol\beta_0) / \sigma_0}{\sigma / \sigma_0}\right) &\leq f^{1/2}\left(\frac{u - \mathbf{x}^T(\boldsymbol\beta - \boldsymbol\beta_0) / \sigma_0}{\sigma}\right) \, \sigma_0^{-\frac{\gamma + 1}{2}} \cr
 &=  f^{1/2}\left(\frac{u - \mathbf{x}^T(\boldsymbol\beta - \boldsymbol\beta_0) / \sigma_0}{\sigma}\right) \, \max(1, \sigma_0^{-\frac{\gamma + 1}{2}}).
\end{align*}
In Step $c$, we used that
\[
 \frac{1}{\gamma} + \frac{(u - \mathbf{x}^T(\boldsymbol\beta - \boldsymbol\beta_0) / \sigma_0)^2}{\sigma^2 \gamma} \leq 1 + \frac{(u - \mathbf{x}^T(\boldsymbol\beta - \boldsymbol\beta_0) / \sigma_0)^2}{\sigma^2 \gamma},
\]
because $\gamma \geq 1$, implying that
\begin{align*}
 \left(1 + \frac{(u - \mathbf{x}^T(\boldsymbol\beta - \boldsymbol\beta_0) / \sigma_0)^2}{\sigma^2 \gamma}\right)^{-\frac{\gamma + 1}{4}} & \leq \left(\frac{1}{\gamma} + \frac{(u - \mathbf{x}^T(\boldsymbol\beta - \boldsymbol\beta_0) / \sigma_0)^2}{\sigma^2 \gamma}\right)^{-\frac{\gamma + 1}{4}} \cr
 &= \gamma^{\frac{\gamma + 1}{4}} \left(1 + \frac{(u - \mathbf{x}^T(\boldsymbol\beta - \boldsymbol\beta_0) / \sigma_0)^2}{\sigma^2}\right)^{-\frac{\gamma + 1}{4}}.
\end{align*}
We also used that
\[
 \left(1 + \frac{(u - \mathbf{x}^T(\boldsymbol\beta - \boldsymbol\beta_0) / \sigma_0)^2}{\sigma^2}\right)^{-\frac{\gamma + 1}{4}} \leq \left(1 + \frac{(u - \mathbf{x}^T(\boldsymbol\beta - \boldsymbol\beta_0) / \sigma_0)^2}{\sigma^2}\right)^{-\frac{1}{2}}.
\]
In Step $d$, we used Lemma A.9 in \cite{bunke1998asymptotic}.

We assumed that
\[
 \|\boldsymbol\beta - \boldsymbol\beta_0\|^{1/32} \E|\mathbf{X}^T(\boldsymbol\beta - \boldsymbol\beta_0)|^{-1/32} < \infty.
\]
We now show that this allows to conclude that A4 is verified. Note that the assumption above is verified when, for instance, the components in $\mathbf{X}$ are independent normal random variables.

What we prove is that
\[
 \|\boldsymbol\beta - \boldsymbol\beta_0\|^{-1/32} (|s| + 1)^{-1/2}
\]
is bounded above by a constant times $\|\boldsymbol\theta\|^{-1/2}$. We consider that $k$ has been chosen such that there exists a constant $0 < k_0 < 1$ such that $(\beta_i - \beta_{0, i})^2 \geq k_0^2 \beta_i^2$. Therefore,
\[
 \|\boldsymbol\beta - \boldsymbol\beta_0\|^{-1/32} (|s| + 1)^{-1/2} \leq k_0^{-1/32}\|\boldsymbol\beta\|^{-1/32} (|s| + 1)^{-1/2} \leq k_0^{-1/32}\|\boldsymbol\beta\|^{-1/2} (|s| + 1)^{-1/2},
\]
using that $\|\boldsymbol\beta\| \geq 1$. Also
\begin{align*}
 [\|\boldsymbol\beta\|^2 (|s| + 1)^2]^{-1/4} = [\|\boldsymbol\beta\|^2 (s^2 + 2 |s| + 1)]^{-1/4} & \leq  [\|\boldsymbol\beta\|^2 (s^2 + 1)]^{-1/4} \cr
 & = [\|\boldsymbol\beta\|^2 s^2 + \|\boldsymbol\beta\|^2]^{-1/4} \cr
 & \leq [s^2 + \|\boldsymbol\beta\|^2]^{-1/4},
\end{align*}
using again that $\|\boldsymbol\beta\| \geq 1$. This allows to conclude that A4 is verified.

\textbf{A5}: \textit{there are positive constants $b_2, b_3$ such that for all $\boldsymbol\theta \in \re^{p + 1}$ and $r > 0$ it holds that the measure of $S[\boldsymbol\theta, r]$ under the prior distribution is bounded above by $c r^{b_2}(1 + (\|\boldsymbol\theta\| + r)^{b_3})$. Moreover, the measure of $S[\boldsymbol\theta, r]$ under the prior distribution is strictly positive.
}

We assumed that the prior density is strictly positive which implies that the measure of $S[\boldsymbol\theta, r]$ under the prior distribution is strictly positive. Also, if we consider that the prior density is given by $\pi(\, \cdot \,, \cdot \,)$ when using the parametrization $(\boldsymbol\beta, \sigma)$, then, when evaluated at $(\boldsymbol\beta, s)$, it is equal to $\pi(\boldsymbol\beta, \sigma) \, \sigma_s$ under the parametrization $(\boldsymbol\beta, s)$ where in $\pi$, $\sigma$ is viewed as a function of $s$. We assumed under the parametrization $(\boldsymbol\beta, \sigma)$ that $\pi$ is such that
\[
 \pi(\boldsymbol\beta, \sigma) \leq \begin{cases}
  C \quad \text{if $\sigma \geq 1$,} \cr
  C/\sigma \quad \text{if $\sigma < 1$,}
 \end{cases}
\]
which corresponds to an upper bound on the prior density under the parametrization $(\boldsymbol\beta, s)$ given by
\[
 \begin{cases}
  C \, \sigma_s \quad \text{if $s \geq 0$,} \cr
  C (|s| + 1)^2 \, \sigma_s \quad \text{if $s < 0$.}
 \end{cases}
\]
This upper bound is bounded above by $C (|s| + 1)^2 \, \sigma_s \leq 6 C  (|s| + 1)^3$. Therefore, the measure of $S[\boldsymbol\theta, r]$ under the prior distribution is bounded above by
\[
 6C \int_{S[\boldsymbol\theta, r]} (|s| + 1)^3 \, \d \boldsymbol\beta \, \d s \leq 6C  (r + 1)^3 \int_{S[\boldsymbol\theta, r]} \d \boldsymbol\beta \, \d s.
\]
A5 is thus verified.

\textbf{A6}: \textit{let $L: \re^{p + 1} \times \re^{p + 1} \to \re^+$ be a measurable loss function with $L(\boldsymbol\theta, \boldsymbol\theta) = 0$, $c_1, c_2, c_3, b_4, b_5$ be positive constants with
\[
 (c_1 \|\mathbf{t} - \boldsymbol\theta\|^{b_4}) \wedge c_2 \leq L(\mathbf{t}, \boldsymbol\theta) \leq c_3 \|\mathbf{t} - \boldsymbol\theta\|^{b_5},
\]
for all $\mathbf{t}, \boldsymbol\theta$.
}

The quadratic loss is seen to satisfy this assumption.

\textbf{A7}: \textit{the pseudo-true value $\boldsymbol\theta^*$ is unique and belongs to the interior of $\re^{p + 1}$.}

This assumption is verified. This concludes the proof of Result (b).

We turn to the proof of Result (c). To prove this result, we verify the rest of the assumptions of Theorem 4.1 in \cite{bunke1998asymptotic}.

\textbf{A8}: \textit{the function $l(\mathbf{z}, \boldsymbol\theta) := \log[p_{\boldsymbol\theta}(y \mid \mathbf{x}) / p_{0}(y \mid \mathbf{x})]$ has for fixed $\mathbf{z}$ continuous derivatives of second-order with respect to $\boldsymbol\theta$ in the interior of $\re^{p + 1}$,
\[
 l'(\mathbf{z}, \boldsymbol\theta) := \frac{\partial}{\partial \boldsymbol\theta}  l(\mathbf{z}, \boldsymbol\theta), \quad l''(\mathbf{z}, \boldsymbol\theta) := \frac{\partial^2}{\partial \boldsymbol\theta \partial \boldsymbol\theta^T}  l(\mathbf{z}, \boldsymbol\theta).
\]
Moreover, there is a positive function $H$ on $\re^{p + 1}$ and a positive integer $b_6$ with $\E[H(\mathbf{Z})] < \infty$,
\[
 \|l''(\mathbf{z}, \boldsymbol\theta_1) - l''(\mathbf{z}, \boldsymbol\theta_2)\| \leq H(\mathbf{z}) [1 + \|\boldsymbol\theta_1\|^{b_6} + \|\boldsymbol\theta_2\|^{b_6}] \, \|\boldsymbol\theta_1 - \boldsymbol\theta_2\|,
\]
\[
 \|l''(\mathbf{z}, \boldsymbol\theta_1)\| \leq H(\mathbf{z}) [1 + \|\boldsymbol\theta_1\|^{b_6 + 1}],
\]
for any $\boldsymbol\theta_1, \boldsymbol\theta_2$ in the interior of $\re^{p + 1}$, $\| \, \cdot \, \|$ denoting the Euclidean norm on $\re^{p + 1}$ or the analogous norm on the set of $(p + 1) \times (p + 1)$ matrices.
}

We note that the matrix norm that will be used is the Frobenius norm. The first-order derivatives have already been computed (see A3):
\[
   l'(\mathbf{z}, \boldsymbol\theta) = [p_{\boldsymbol\theta}(y \mid \mathbf{x}) ]^{-1} p_{\boldsymbol\theta}'(y \mid \mathbf{x}) = \left(\begin{array}{c}
    \sigma^{-1} \frac{\gamma + 1}{\gamma}\left(1 + \frac{(y - \mathbf{x}^T \boldsymbol\beta)^2}{\sigma^2 \gamma}\right)^{-1} \frac{y - \mathbf{x}^T \boldsymbol\beta}{\sigma}  \, \mathbf{x} \cr
    - \sigma^{-1} \sigma_s + \sigma^{-1} \sigma_s \frac{\gamma + 1}{\gamma} \left(1 + \frac{(y - \mathbf{x}^T \boldsymbol\beta)^2}{\sigma^2 \gamma}\right)^{-1} \frac{(y - \mathbf{x}^T \boldsymbol\beta)^2}{\sigma^2}
    \end{array}
   \right),
  \]
which is a continuous function of $\boldsymbol\theta$.

The matrix of second derivatives is given by
\[
 l''(\mathbf{z}, \boldsymbol\theta) = \left(\begin{array}{cc}
    A & B \cr
    B^T & D
    \end{array}
   \right),
\]
where
\[
 A := -\sigma^{-2} \frac{\gamma + 1}{\gamma} \frac{1 - \frac{(y - \mathbf{x}^T \boldsymbol\beta)^2}{\sigma^2 \gamma}}{\left(1 + \frac{(y - \mathbf{x}^T \boldsymbol\beta)^2}{\sigma^2 \gamma}\right)^2} \, \mathbf{x} \mathbf{x}^T,
\]
\[
 B := -2\sigma^{-2}\sigma_s \frac{\gamma + 1}{\gamma} \frac{\frac{y - \mathbf{x}^T \boldsymbol\beta}{\sigma}}{\left(1 + \frac{(y - \mathbf{x}^T \boldsymbol\beta)^2}{\sigma^2 \gamma}\right)^2} \, \mathbf{x},
\]
\begin{align*}
 D &:= -\sigma^{-1} \sigma_{ss}\left(1- (\gamma + 1) \frac{\frac{(y - \mathbf{x}^T \boldsymbol\beta)^2}{\sigma^2 \gamma}}{\left(1 + \frac{(y - \mathbf{x}^T \boldsymbol\beta)^2}{\sigma^2 \gamma}\right)}\right) \cr
 &\qquad -\sigma^{-2}\sigma_s^2\left(2(\gamma + 1) \frac{\frac{(y - \mathbf{x}^T \boldsymbol\beta)^2}{\sigma^2 \gamma}}{\left(1 + \frac{(y - \mathbf{x}^T \boldsymbol\beta)^2}{\sigma^2 \gamma}\right)^2} + (\gamma + 1) \frac{\frac{(y - \mathbf{x}^T \boldsymbol\beta)^2}{\sigma^2 \gamma}}{\left(1 + \frac{(y - \mathbf{x}^T \boldsymbol\beta)^2}{\sigma^2 \gamma}\right)} - 1\right),
\end{align*}
$\sigma_{ss}$ being the second derivative of $\sigma$ (viewed as a function of $s$) with respect to $s$:
  \[
   \frac{\partial^2 \sigma}{\partial s^2} = \begin{cases}
    6 \qquad \text{if $s \geq 0$,} \cr
    6(1 - s)^{-4} \quad \text{if $s < 0$.}
   \end{cases}
  \]

  $l''(\mathbf{z}, \boldsymbol\theta)$ is a continuous function of $\boldsymbol\theta$. We now show that we are able to obtain a bound of the form:
   \[
 \|l''(\mathbf{z}, \boldsymbol\theta_1) - l''(\mathbf{z}, \boldsymbol\theta_2)\|^2 \leq H^2(\mathbf{z}) [1 + \|\boldsymbol\theta_1\|^{b_6} + \|\boldsymbol\theta_2\|^{b_6}]^2 \, \|\boldsymbol\theta_1 - \boldsymbol\theta_2\|^2,
\]
for the part of the matrix associated with Block $A$. The bounds associated to the other blocks can be obtained similarly and they can all be put together to obtain the desired bound. We have
\begin{align*}
 &\left\|-\sigma_1^{-2} \frac{\gamma + 1}{\gamma} \phi(\boldsymbol\beta_1) \, \mathbf{x} \mathbf{x}^T + \sigma_2^{-2} \frac{\gamma + 1}{\gamma} \phi(\boldsymbol\beta_2) \, \mathbf{x} \mathbf{x}^T\right\|^2
 \cr
&\qquad =\left(\frac{\gamma + 1}{\gamma}\right)^2 \|\mathbf{x} \mathbf{x}^T\|^2(\sigma_1^{-2} \phi(\boldsymbol\beta_1) - \sigma_2^{-2} \phi(\boldsymbol\beta_1) + \sigma_2^{-2} \phi(\boldsymbol\beta_1) - \sigma_2^{-2} \phi(\boldsymbol\beta_2))^2 \cr
&\qquad \leq 2\left(\frac{\gamma + 1}{\gamma}\right)^2 \|\mathbf{x} \mathbf{x}^T\|^2[(\sigma_1^{-2} \phi(\boldsymbol\beta_1) - \sigma_2^{-2} \phi(\boldsymbol\beta_1))^2 + (\sigma_2^{-2} \phi(\boldsymbol\beta_1) - \sigma_2^{-2} \phi(\boldsymbol\beta_2))^2],
\end{align*}
using that for any real numbers $a,b$, $(a + b^2) \leq 2a^2 + 2b^2$ and
where
\[
 \phi(\boldsymbol\beta) := \frac{1 - \frac{(y - \mathbf{x}^T \boldsymbol\beta)^2}{\sigma^2 \gamma}}{\left(1 + \frac{(y - \mathbf{x}^T \boldsymbol\beta)^2}{\sigma^2 \gamma}\right)^2}.
\]

We now look at the first term in the squared brackets above. We have that $0 \leq |\phi| \leq 1$. Therefore,
\begin{align*}
(\sigma_1^{-2} \phi(\boldsymbol\beta_1) - \sigma_2^{-2} \phi(\boldsymbol\beta_1))^2 &\leq (\sigma_1^{-2} - \sigma_2^{-2})^2 \cr
&=(\sigma_1^{-1/2}\sigma_1^{-3/2} - \sigma_2^{-1/2}\sigma_1^{-3/2} + \sigma_2^{-1/2}\sigma_1^{-3/2} - \sigma_2^{-1/2}\sigma_2^{-3/2})^2 \cr
&\leq 2 \sigma_1^{-3} (\sigma_1^{-1/2} - \sigma_2^{-1/2})^2 + 2 \sigma_2^{-1}(\sigma_1^{-3/2} - \sigma_2^{-3/2})^2 \cr
&\leq 2 \sigma_1^{-3} (\sigma_1^{-1/2} - \sigma_2^{-1/2})^2 + 4 \sigma_2^{-1} \sigma_1^{-2} (\sigma_1^{-1/2} - \sigma_2^{-1/2})^2 + 8 \sigma_2^{-1} \sigma_1^{-2}(\sigma_1^{-1/2} - \sigma_2^{-1/2})^2  \cr
&\quad + 8 \sigma_2^{-2} \sigma_1^{-1}(\sigma_1^{-1/2} - \sigma_2^{-1/2})^2 \quad \text{(repeated argument)} \cr
&\leq 12 (\sigma_1^{-1/2} - \sigma_2^{-1/2})^2(\sigma_1^{-3}  + \sigma_2^{-1} \sigma_1^{-2} + \sigma_2^{-2} \sigma_1^{-1}) \cr
&\leq 12 (\sigma_1^{-1/2} - \sigma_2^{-1/2})^2(\sigma_1^{-3}  + (1/2)\sigma_2^{-2} + (1/2) \sigma_1^{-4} + (1/2)\sigma_2^{-4} + (1/2) \sigma_1^{-2}) \cr
&\leq 24 (\sigma_1^{-1/2} - \sigma_2^{-1/2})^2(\sigma_1^{-4}  + \sigma_2^{-4}).
\end{align*}
It can be shown that $(\sigma_1^{-1/2} - \sigma_2^{-1/2})^2 \leq \|s_1 - s_2\|^2$ and that there exist positive constants $b_9$ and $b_{10}$ such that $\sigma_1^{-4}  + \sigma_2^{-4} \leq b_9(b_{10} + \|s_1\|^4 + \|s_2\|^4) \leq b_9 b_{10}(1 + \|s_1\|^4 + \|s_2\|^4) \leq b_9 b_{10}(1 + \|s_1\|^4 + \|s_2\|^4)^2$.

We now look at the second term in the squared brackets above:
\[
 (\sigma_2^{-2} \phi(\boldsymbol\beta_1) - \sigma_2^{-2} \phi(\boldsymbol\beta_2))^2 = \sigma_2^{-4} (\phi(\boldsymbol\beta_1) - \phi(\boldsymbol\beta_2))^2.
\]
It can be verified that the derivative of the function $\phi$ is given by a function, which is bounded in absolute value by $1$, to which we multiply $\mathbf{x}$. Therefore,
\[
 (\phi(\boldsymbol\beta_1) - \phi(\boldsymbol\beta_2))^2 \leq \max |x_i|^2 \|\boldsymbol\beta_1 - \boldsymbol\beta_2\|^2 \leq \|\mathbf{x}\|^2  \|\boldsymbol\beta_1 - \boldsymbol\beta_2\|^2.
\]
Also,
\[
 \sigma_2^{-4} \leq b_9 b_{10}(1 + \|s_1\|^4 + \|s_2\|^4)^2.
\]

We thus have an upper bound which is given by
\begin{align*}
 & \left\|-\sigma_1^{-2} \frac{\gamma + 1}{\gamma} \phi(\boldsymbol\beta_1) \, \mathbf{x} \mathbf{x}^T + -\sigma_2^{-2} \frac{\gamma + 1}{\gamma} \phi(\boldsymbol\beta_2) \, \mathbf{x} \mathbf{x}^T\right\|^2 \cr
 &\qquad \leq 48\left(\frac{\gamma + 1}{\gamma}\right)^2 \|\mathbf{x} \mathbf{x}^T\|^2(1 + \|\mathbf{x}\|^2) b_9 b_{10}(1 + \|s_1\|^4 + \|s_2\|^4)^2 \|\boldsymbol\theta_1 - \boldsymbol\theta_2\|^2.
\end{align*}

All the other bounds are of the same form.

We now show that we are able to obtain a bound of the form:
   \[
 \|l''(\mathbf{z}, \boldsymbol\theta_1)\|^2 \leq H^2(\mathbf{z}) [1 + \|\boldsymbol\theta_1\|^{b_6 + 1}]^2,
\]
for the part of the matrix associated with Block $A$. The bounds associated to the other blocks can be obtained similarly and they can all be put together to obtain the desired bound. We have
\begin{align*}
 & \left\|\sigma_1^{-2} \frac{\gamma + 1}{\gamma} \phi(\boldsymbol\beta_1) \, \mathbf{x} \mathbf{x}^T \right\|^2 \leq \left(\frac{\gamma + 1}{\gamma}\right)^2 \sigma_1^{-4} \|\mathbf{x} \mathbf{x}^T\|^2 \leq \left(\frac{\gamma + 1}{\gamma}\right)^2 b_9 b_{10}(1 + \|s_1\|^4)^2 \|\mathbf{x} \mathbf{x}^T\|^2.
\end{align*}

All the other bounds are of the same form.

We therefore have that $H^2(\mathbf{z}) = 48\left(\frac{\gamma + 1}{\gamma}\right)^2 \|\mathbf{x} \mathbf{x}^T\|^2(1 + \|\mathbf{x}\|^2) b_9 b_{10}$. We have that $\E[H(\mathbf{Z})] < \infty$ because we assumed that $\E\|\mathbf{X}\|^{4(p + 2)} < \infty$.

\textbf{A9}: \textit{We assume the expectations
\[
 I(\boldsymbol\theta) := \E[l'(\mathbf{Z}, \boldsymbol\theta) l'(\mathbf{Z}, \boldsymbol\theta)^T]
\]
and
\[
 M(\boldsymbol\theta) := -\E[l''(\mathbf{Z}, \boldsymbol\theta)]
\]
to exist and to be positive-definite matrices in a neighbourhood of $\boldsymbol\theta = (\boldsymbol\beta_0, \sigma^*)$.
}

We show that the assumption holds for $M(\boldsymbol\theta)$. The proof is similar for $ I(\boldsymbol\theta)$. We proceed by using Sylvester's criterion: a matrix is positive definite if and only if all the following matrices have a positive determinant: the upper left 1-by-1 corner, the upper left 2-by-2 corner, ..., the matrix itself. Recall that we assume that $\E[\mathbf{X} \mathbf{X}^T]$ is a positive-definite matrix.

We now show that the matrix defined by $-\E[A]$ has a positive determinant. The proof that the same is true for the smaller matrices is similar. We have
\begin{align*}
 -\E[A] &= \sigma^{-2} \frac{\gamma + 1}{\gamma} \int \mu_{\mathbf{X}}(\d \mathbf{x}) \, \mathbf{x} \mathbf{x}^T \int \frac{1 - \frac{(y - \mathbf{x}^T \boldsymbol\beta)^2}{\sigma^2 \gamma}}{\left(1 + \frac{(y - \mathbf{x}^T \boldsymbol\beta)^2}{\sigma^2 \gamma}\right)^2} \, p_0(y \mid \mathbf{x}) \, \d y \cr
 &= \sigma^{-2} \frac{\gamma + 1}{\gamma} \int \mu_{\mathbf{X}}(\d \mathbf{x}) \, \mathbf{x} \mathbf{x}^T \int \frac{1 - \frac{(u - \mathbf{x}^T (\boldsymbol\beta - \boldsymbol\beta_0)/\sigma_0)^2}{(\sigma / \sigma_0)^2 \gamma}}{\left(1 + \frac{(u - \mathbf{x}^T (\boldsymbol\beta - \boldsymbol\beta_0)/\sigma_0)^2}{(\sigma / \sigma_0)^2 \gamma}\right)^2} \, g(u) \, \d u \cr
 &= \sigma^{-2} \frac{\gamma + 1}{\gamma}\E\left[\mathbf{X} \mathbf{X}^T \frac{1 - \frac{(U - \mathbf{X}^T (\boldsymbol\beta - \boldsymbol\beta_0)/\sigma_0)^2}{(\sigma / \sigma_0)^2 \gamma}}{\left(1 + \frac{(U - \mathbf{X}^T (\boldsymbol\beta - \boldsymbol\beta_0)/\sigma_0)^2}{(\sigma / \sigma_0)^2 \gamma}\right)^2}\right],
\end{align*}
using the change of variable $u = \sigma_0^{-1}(y - \mathbf{x}^T\boldsymbol\beta_0)$. We view the random variable inside the expectation as a sequence indexed by $k$ where the terms that vary are $\boldsymbol\beta$ and $\sigma$ with $(\boldsymbol\beta_k, \sigma_k) \rightarrow (\boldsymbol\beta_0, \sigma^*)$ as $k \rightarrow \infty$. We have that
\begin{align*}
\sigma_k^{-2} \frac{\gamma + 1}{\gamma}\E\left[\mathbf{X} \mathbf{X}^T \frac{1 - \frac{(U - \mathbf{X}^T (\boldsymbol\beta_k - \boldsymbol\beta_0)/\sigma_0)^2}{(\sigma_k / \sigma_0)^2 \gamma}}{\left(1 + \frac{(U - \mathbf{X}^T (\boldsymbol\beta_k - \boldsymbol\beta_0)/\sigma_0)^2}{(\sigma_k / \sigma_0)^2 \gamma}\right)^2}\right] &\rightarrow (\sigma^*)^{-2} \frac{\gamma + 1}{\gamma}\E\left[\mathbf{X} \mathbf{X}^T \frac{1 - \frac{U^2}{(\sigma^* / \sigma_0)^2 \gamma}}{\left(1 + \frac{U^2}{(\sigma^* / \sigma_0)^2 \gamma}\right)^2}\right] \cr
& = (\sigma^*)^{-2} \frac{\gamma + 1}{\gamma}\E\left[\mathbf{X} \mathbf{X}^T\right]\E\left[ \frac{1 - \frac{U^2}{(\sigma^* / \sigma_0)^2 \gamma}}{\left(1 + \frac{U^2}{(\sigma^* / \sigma_0)^2 \gamma}\right)^2}\right].
\end{align*}
This follows from Lebesgue’s dominated convergence theorem and the fact that
\[
 0 \leq \frac{1 - \frac{(U - \mathbf{X}^T (\boldsymbol\beta_k - \boldsymbol\beta_0)/\sigma_0)^2}{(\sigma_k / \sigma_0)^2 \gamma}}{\left(1 + \frac{(U - \mathbf{X}^T (\boldsymbol\beta_k - \boldsymbol\beta_0)/\sigma_0)^2}{(\sigma_k / \sigma_0)^2 \gamma}\right)^2} \leq 1.
\]

We thus have that the limiting matrix has a positive determinant because the determinant of $\E\left[\mathbf{X} \mathbf{X}^T\right]$ is positive, $(\sigma^*)^{-2}, \frac{\gamma + 1}{\gamma} > 0$ and
\[
\E\left[ \frac{1 - \frac{U^2}{(\sigma^* / \sigma_0)^2 \gamma}}{\left(1 + \frac{U^2}{(\sigma^* / \sigma_0)^2 \gamma}\right)^2}\right] > 0
\]
for any $\gamma$. Therefore, by taking the neighbourhood around $(\boldsymbol\beta_0, \sigma^*)$ small enough, we know that the determinant of $-\E[A]$ is positive.

There remains to prove that the determinant of $M(\boldsymbol\theta)$ is positive. We use a limiting argument again. We know that
\[
 -\E[A] \rightarrow (\sigma^*)^{-2} \frac{\gamma + 1}{\gamma}\E\left[\mathbf{X} \mathbf{X}^T\right]\E\left[ \frac{1 - \frac{U^2}{(\sigma^* / \sigma_0)^2 \gamma}}{\left(1 + \frac{U^2}{(\sigma^* / \sigma_0)^2 \gamma}\right)^2}\right],
\]
which has a positive determinant. We now show that
\[
 -\E[B] \rightarrow \mathbf{0},
\]
which implies that the determinant of $M(\boldsymbol\theta)$ is approximately equal to that of $-\E[A]$ times the determinant of $-\E[D]$, the latter expectation being of dimension one and converging to a positive constant (which will be shown as well). This will allow to conclude.

We have
\begin{align*}
 -\E[B] &= 2\sigma^{-2}\sigma_s \frac{\gamma + 1}{\gamma}\int \mu_{\mathbf{X}}(\d \mathbf{x}) \, \mathbf{x} \int \frac{\frac{y - \mathbf{x}^T \boldsymbol\beta}{\sigma}}{\left(1 + \frac{(y - \mathbf{x}^T \boldsymbol\beta)^2}{\sigma^2 \gamma}\right)^2} \, p_0(y \mid \mathbf{x}) \, \d y \cr
 &= 2\sigma^{-2}\sigma_s \frac{\gamma + 1}{\gamma}\int \mu_{\mathbf{X}}(\d \mathbf{x}) \, \mathbf{x} \int \frac{\frac{u - \mathbf{x}^T (\boldsymbol\beta - \boldsymbol\beta_0)/\sigma_0}{\sigma / \sigma_0}}{\left(1 + \frac{(u - \mathbf{x}^T (\boldsymbol\beta - \boldsymbol\beta_0)/\sigma_0)^2}{(\sigma / \sigma_0)^2 \gamma}\right)^2} \, g(u) \, \d u \cr
 &= 2\sigma^{-2}\sigma_s \frac{\gamma + 1}{\gamma}\E\left[\mathbf{X}  \, \frac{\frac{U - \mathbf{X}^T (\boldsymbol\beta - \boldsymbol\beta_0)/\sigma_0}{\sigma / \sigma_0}}{\left(1 + \frac{(U - \mathbf{X}^T (\boldsymbol\beta - \boldsymbol\beta_0)/\sigma_0)^2}{(\sigma / \sigma_0)^2 \gamma}\right)^2}\right],
\end{align*}
using the change of variable $u = \sigma_0^{-1}(y - \mathbf{x}^T\boldsymbol\beta_0)$. As before, we view the random variable inside the expectation as a sequence indexed by $k$ where the terms that vary are $\boldsymbol\beta$ and $\sigma$ with $(\boldsymbol\beta_k, \sigma_k) \rightarrow (\boldsymbol\beta_0, \sigma^*)$ as $k \rightarrow \infty$. We have that
\begin{align*}
2\sigma_k^{-2}\sigma_{k,s} \frac{\gamma + 1}{\gamma}\E\left[\mathbf{X}  \, \frac{\frac{U - \mathbf{X}^T (\boldsymbol\beta_k - \boldsymbol\beta_0)/\sigma_0}{\sigma_k / \sigma_0}}{\left(1 + \frac{(U - \mathbf{X}^T (\boldsymbol\beta_k - \boldsymbol\beta_0)/\sigma_0)^2}{(\sigma_k / \sigma_0)^2 \gamma}\right)^2}\right] &\rightarrow 2(\sigma^*)^{-2}\sigma_{s}^* \frac{\gamma + 1}{\gamma}\E\left[\mathbf{X}  \, \frac{\frac{U}{\sigma^* / \sigma_0}}{\left(1 + \frac{U^2}{(\sigma^* / \sigma_0)^2 \gamma}\right)^2}\right] \cr
& = 2(\sigma^*)^{-2}\sigma_{s}^* \frac{\gamma + 1}{\gamma}\E[\mathbf{X}] \E\left[\frac{\frac{U}{\sigma^* / \sigma_0}}{\left(1 + \frac{U^2}{(\sigma^* / \sigma_0)^2 \gamma}\right)^2}\right] \cr
& = 2(\sigma^*)^{-2}\sigma_{s}^* \frac{\gamma + 1}{\gamma}\E[\mathbf{X}] \times 0.
\end{align*}
This follows from Lebesgue’s dominated convergence theorem and the fact that
\[
 0 \leq \frac{\frac{U - \mathbf{X}^T (\boldsymbol\beta_k - \boldsymbol\beta_0)/\sigma_0}{\sigma_k / \sigma_0}}{\left(1 + \frac{(U - \mathbf{X}^T (\boldsymbol\beta_k - \boldsymbol\beta_0)/\sigma_0)^2}{(\sigma_k / \sigma_0)^2 \gamma}\right)^2} \leq 1.
\]

A similar analysis allows to show that
\begin{align*}
 -\E[D] &\rightarrow (\sigma^*)^{-1} \sigma_{ss}^*\left(1- (\gamma + 1) \E\left[\frac{\frac{U^2}{(\sigma^* / \sigma_0)^2 \gamma}}{\left(1 + \frac{U^2}{(\sigma^* / \sigma_0)^2 \gamma}\right)}\right]\right) \cr
 &\qquad + (\sigma^*)^{-2}(\sigma_s^*)^2\left(2(\gamma + 1) \E\left[\frac{\frac{U^2}{(\sigma^* / \sigma_0)^2 \gamma}}{\left(1 + \frac{U^2}{(\sigma^* / \sigma_0)^2 \gamma}\right)^2}\right] + (\gamma + 1) \E\left[\frac{\frac{U^2}{(\sigma^* / \sigma_0)^2 \gamma}}{\left(1 + \frac{U^2}{(\sigma^* / \sigma_0)^2 \gamma}\right)}\right] - 1\right),
\end{align*}
which allows to conclude because for any $\gamma$
\[
 \left(1- (\gamma + 1) \E\left[\frac{\frac{U^2}{(\sigma^* / \sigma_0)^2 \gamma}}{\left(1 + \frac{U^2}{(\sigma^* / \sigma_0)^2 \gamma}\right)}\right]\right) > 0
\]
and
\[
 \left(2(\gamma + 1) \E\left[\frac{\frac{U^2}{(\sigma^* / \sigma_0)^2 \gamma}}{\left(1 + \frac{U^2}{(\sigma^* / \sigma_0)^2 \gamma}\right)^2}\right] + (\gamma + 1) \E\left[\frac{\frac{U^2}{(\sigma^* / \sigma_0)^2 \gamma}}{\left(1 + \frac{U^2}{(\sigma^* / \sigma_0)^2 \gamma}\right)}\right] - 1\right) > 0.
\]

\textbf{A10}: \textit{in the interior of $\re^{p+1}$, the loss function $L$ has continuous partial derivatives
\[
L^{(i,j)}(\boldsymbol\theta_1, \boldsymbol\theta_2) := \frac{\partial^{i + j}}{\partial \boldsymbol\theta_1^i \partial \boldsymbol\theta_2^j} L(\boldsymbol\theta_1, \boldsymbol\theta_2), \quad i, j = 1, 2.
\]
Moreover we assume with $c, b_7 > 0$ and for $i, j = 1, 2$,
\[
 \|L^{(i,j)}(\boldsymbol\theta_1, \boldsymbol\theta_2)\| \leq c (1 + \|\boldsymbol\theta_1\| + \|\boldsymbol\theta_2\|),
\]
for any $\boldsymbol\theta_1, \boldsymbol\theta_2$ in the interior of $\re^{p+1}$.
}

It can be readily verified that this assumption is verified when the loss function corresponds to the squared Euclidean norm.

\textbf{A11}: \textit{the prior measure has a density $\pi$ with respect to the Lebesgue measure on $\re^{p + 1}$, which is continuous on $\re^{p + 1}$ and fulfills for $b_8 > 0$,
\[
 0 < \pi(\boldsymbol\theta) < c(1+\|\boldsymbol\theta\|^{b_8}), \quad \boldsymbol\theta \in \re^{p + 1}.
\]
}

We assume that the prior density is strictly positive and that it is continuous. While verifying A5 we showed that it is bounded above by $6C(|s|+1)^3$. There exists a positive constant $c$ such that $6C(|s|+1)^3 < c(1 + \|s\|^3)$, which allows to verify A11. This concludes the proof of Result (c).
\end{proof}

\section{Details of the numerical experiment in \autoref{sec:robustness}}\label{sec:details_exp}

We use a HMC algorithm to sample from the posterior distribution. We thus apply a transformation on $\sigma$ to make it a variable on the real line. The original target density is such that:
\[
 \pi(\boldsymbol\beta, \sigma \mid \mathbf{y}) \propto \frac{1}{\sigma} \prod_{i = 1}^n \frac{1}{\sigma}f\left(\frac{y_i - \mathbf{x}_i^T \boldsymbol\beta}{\sigma}\right).
\]
We define $\nu := \log \sigma$, and thus,
\[
 \pi(\boldsymbol\beta, \nu \mid \mathbf{y}) \propto \frac{1}{\ee^{\nu n}} \prod_{i = 1}^n f\left(\frac{y_i - \mathbf{x}_i^T \boldsymbol\beta}{\ee^\nu}\right).
\]
The log density is such that (if we forget about the constants):
\[
 \log \pi(\boldsymbol\beta, \nu \mid \mathbf{y}) = -n \nu + \sum_{i = 1}^n \log f\left(\frac{y_i - \mathbf{x}_i^T \boldsymbol\beta}{\ee^\nu}\right).
\]
The gradient is such that:
\[
 \frac{\partial}{\partial \boldsymbol\beta} \log \pi(\boldsymbol\beta, \nu \mid \mathbf{y}) = \ee^{-\nu} \, \frac{\gamma + 1}{\gamma} \sum_{i = 1}^n \left(1 + \frac{(y_i - \mathbf{x}_i^T \boldsymbol\beta)^2}{\ee^{2 \nu} \gamma}\right)^{-1} \frac{y_i - \mathbf{x}_i^T \boldsymbol\beta}{\ee^{\nu}}  \, \mathbf{x}_i,
\]
\[
 \frac{\partial}{\partial \nu} \log \pi(\boldsymbol\beta, \nu \mid \mathbf{y}) =-n + \frac{\gamma + 1}{\gamma} \sum_{i = 1}^n  \left(1 + \frac{(y_i - \mathbf{x}_i^T \boldsymbol\beta)^2}{\ee^{2 \nu} \gamma}\right)^{-1} \frac{(y_i - \mathbf{x}_i^T \boldsymbol\beta)^2}{\ee^{2 \nu}}.
\]

We now perform the same calculations for the limiting posterior from which we sample. The original target density is such that:
\[
 \pi(\boldsymbol\beta, \sigma \mid \mathbf{y}_{\text{O}^\mathsf{c}}) \propto \frac{1}{\sigma} \, \sigma^{|\text{O}|\gamma} \prod_{i \in \text{O}^\mathsf{c}} \frac{1}{\sigma}f\left(\frac{y_i - \mathbf{x}_i^T \boldsymbol\beta}{\sigma}\right).
\]
After the change of variable $\nu = \log \sigma$, we have
\[
 \pi(\boldsymbol\beta, \nu \mid \mathbf{y}_{\text{O}^\mathsf{c}}) \propto \frac{1}{\ee^{\nu (|\text{O}^\mathsf{c}| - |\text{O}|\gamma)}} \prod_{i \in \text{O}^\mathsf{c}} f\left(\frac{y_i - \mathbf{x}_i^T \boldsymbol\beta}{\ee^\nu}\right).
\]
The log density is such that (if we forget about the constants):
\[
 \log \pi(\boldsymbol\beta, \nu \mid \mathbf{y}_{\text{O}^\mathsf{c}}) = -\nu (|\text{O}^\mathsf{c}| - |\text{O}|\gamma) + \sum_{i \in \text{O}^\mathsf{c}} \log f\left(\frac{y_i - \mathbf{x}_i^T \boldsymbol\beta}{\ee^\nu}\right).
\]
The gradient is such that:
\[
 \frac{\partial}{\partial \boldsymbol\beta} \log \pi(\boldsymbol\beta, \nu \mid \mathbf{y}_{\text{O}^\mathsf{c}}) = \ee^{-\nu} \, \frac{\gamma + 1}{\gamma} \sum_{i \in \text{O}^\mathsf{c}} \left(1 + \frac{(y_i - \mathbf{x}_i^T \boldsymbol\beta)^2}{\ee^{2 \nu} \gamma}\right)^{-1} \frac{y_i - \mathbf{x}_i^T \boldsymbol\beta}{\ee^{\nu}}  \, \mathbf{x}_i,
\]
\[
 \frac{\partial}{\partial \nu} \log \pi(\boldsymbol\beta, \nu \mid \mathbf{y}_{\text{O}^\mathsf{c}}) =-(|\text{O}^\mathsf{c}| - |\text{O}|\gamma) + \frac{\gamma + 1}{\gamma} \sum_{i \in \text{O}^\mathsf{c}}  \left(1 + \frac{(y_i - \mathbf{x}_i^T \boldsymbol\beta)^2}{\ee^{2 \nu} \gamma}\right)^{-1} \frac{(y_i - \mathbf{x}_i^T \boldsymbol\beta)^2}{\ee^{2 \nu}}.
\]

In \autoref{sec:robustness}, we also present numerical results for the normal linear regression. We use that the posterior mean of $\boldsymbol\beta$ is $(\mathbf{X}^T \mathbf{X})^{-1}  \mathbf{X}^T \mathbf{y}$ and that the posterior covariance matrix of $\boldsymbol\beta$ is
\[
 \frac{\|\mathbf{y} - \hat{\mathbf{y}}\|^2}{n - p - 2} \, (\mathbf{X}^T \mathbf{X})^{-1},
\]
$\mathbf{X}$ being here the design matrix and $\hat{\mathbf{y}} := \mathbf{X} (\mathbf{X}^T \mathbf{X})^{-1}  \mathbf{X}^T \mathbf{y}$.

\end{document}